\newtheorem{thm}{Theorem}[section]
\newtheorem{prop}[thm]{Proposition}
\newtheorem{lemma}[thm]{Lemma}
\newtheorem{example}[thm]{Example}
\newcommand{\be}{\begin{equation}}
\newcommand{\ee}{\end{equation}}
\newcommand{\ben}{\begin{equation*}}
\newcommand{\een}{\end{equation*}}
\newcommand\ba{\begin{array}}
\newcommand\ea{\end{array}}
\newcommand{\bea}{\begin{eqnarray}}
\newcommand{\eea}{\end{eqnarray}}
\newcommand{\bean}{\begin{eqnarray*}}
\newcommand{\eean}{\end{eqnarray*}}
 \def\ha{\frac 12}
 \def\bF{\mathbf{F}}
 \def\bG{\mathbf{G}}
 \def\bU{\mathbf{U}}
 \def\bV{\mathbf{V}}
 \def\bS{\mathbf{S}}
  \def \dlt{\Delta}
\numberwithin{equation}{section} \topmargin=-2.0cm \oddsidemargin=1cm
\begin{document}
\title{\textbf{Well-balanced fifth-order finite volume WENO schemes with constant subtraction technique for shallow water equations}}

\author{Lidan Zhao\footnote{School of Mathematics, Jilin University, Changchun, 130012, China. Email: zhaold21@mails.jlu.edu.cn.},  ~
Zhanjing Tao\footnote{School of Mathematics, Jilin University, Changchun, 130012, China. Email: zjtao@jlu.edu.cn.},~
and Min Zhang\footnote{Corresponding author. National Engineering Laboratory for Big Data Analysis and Applications, Peking University, Beijing, 100871, China.~Chongqing Research Institute of Big Data, Peking University, Chongqing, 401121, China.  Email: minzhang@pku.edu.cn.}
}

\date{}

\maketitle
\begin{abstract}
In this paper, we propose a new well-balanced fifth-order finite volume WENO method for solving one- and two-dimensional shallow water equations with bottom topography.
The well-balanced property is crucial to the ability of a scheme to simulate perturbation waves over the ``lake-at-rest'' steady state such as waves on a lake or tsunami waves in the deep ocean.
We adopt the constant subtraction technique such that both the flux gradient and source term in the new pre-balanced form vanish at the lake-at-rest steady state, while the well-balanced WENO method by Xing and Shu [Commun. Comput. Phys., 2006] uses high-order accurate numerical discretization of the source term and makes the exact balance between the source term and the flux gradient, to achieve the well-balanced property.
The scaling positivity-preserving limiter is used for the water height near the dry areas.
The fifth-order WENO-AO reconstruction is used to construct the solution since it has better resolution than the WENO-ZQ and WENO-MR reconstructions for the perturbation of steady state flows.
Extensive one- and two-dimensional numerical examples are presented to demonstrate the well-balanced, fifth-order accuracy, non-oscillatory, and positivity-preserving properties of the proposed method.

\end{abstract}

\vspace{5pt}

\noindent\textbf{Keywords: Well-balanced; WENO; Shallow water equations; Constant subtraction technique}

\normalsize \vskip 0.2in
\newpage

\section{Introduction}
\label{sec:intro}

The shallow water equations (SWEs) with source terms play a critical role in the modeling and simulation of flows in rivers and coastal areas. They have wide applications in ocean and hydraulic engineering, such as tidal flows in estuary and coastal water regions, bore wave propagation, rivers, reservoirs, and open channel flows, etc.
{
The one-dimensional (1D) SWEs take the original form
\begin{equation}\label{swe-1d}
\frac{\partial}{\partial t}
\begin{bmatrix*}[c]
h\\
hu
\end{bmatrix*}
+\frac{\partial}{\partial x}
\begin{bmatrix*}[c]
hu\\
hu^2+\ha gh^2
\end{bmatrix*} =
\begin{bmatrix*}[c]
0\\
-ghb_x
\end{bmatrix*},
\end{equation}
}
where $h=h(x,t)\geq0$ is the water height, $u$ is the water velocity, $b=b(x)$ represents the bottom topography, and $g$ is the gravitational constant.
{ The source term in \eqref{swe-1d} only takes into account the effect of the bottom topography.}

There are some challenges in developing robust and accurate numerical methods for the system \eqref{swe-1d}.
One is that the system produces both smooth and non-smooth solutions (e.g., the hydraulic jumps/shock waves, rarefaction waves) even when the initial date is smooth. Second, we need to ensure the nonnegativity of the water height near the dry situation.  Third is that the system admits steady state solutions in which the flux gradient is balanced by the source term exactly. An important steady state solution is the so-called ``lake-at-rest":
\be \label{steady state}
h+b=C,  \quad hu=0.
\ee

Research on numerical methods for the solution of the SWEs has attracted tremendous attention in the past three decades. A significant result in computing such solutions was given
by Bermudez and Vazquez \cite{bermudez1994upwind} in $1994$. They proposed the idea of
the exact ``C-property'', which refers to the ability of the scheme to exactly preserve the lake-at-rest steady state solution \eqref{steady state}.
Such numerical methods
are often regarded as well-balanced methods.  Since then, many well-balanced methods have been developed to approximate the SWEs.
LeVeque \cite{leveque1998balancing} introduced a quasi-steady wave-propagation algorithm by designing a Riemann problem in the center of each grid cell such that the flux difference exactly cancels the source term.
Zhou et al. \cite{zhou2001SR} proposed a surface gradient method for the treatment of the source terms based on an accurate reconstruction of the conservative variables at cell interfaces.
Audusse et al. \cite{audusse2004fast} introduced the concept of hydrostatic reconstruction. Tang et al. \cite{tang2004gas} extended a kinetic flux vector splitting scheme to solve the SWEs with source terms.
Chen et al.~\cite{chen2022unified} presented a second-order accurate bottom-surface-gradient method.
Vukovic and Sopta \cite{vukovic2002eno} proposed the well-balanced finite difference ENO and WENO schemes with the decomposed source term, where the ENO and WENO reconstruction are applied to both the flux and the source term.
{ {Xing and Shu designed high-order well-balanced finite volume WENO  and discontinuous Galerkin methods \cite{xing2006new,xing2011high,xing2010positivity}.}}  Li et al. \cite{li2014high} developed a well-balanced fourth-order central WENO method for the one-dimensional SWEs.
Recently, Wang et al. \cite{WANG20201387} presented a fifth-order finite difference well-balanced multi-resolution WENO method.
Zhao and Zhang \cite{zhao2023well} proposed a well-balanced fifth-order finite difference Hermite WENO method.
More related well-balanced high-order methods include
finite difference methods \cite{huang2022high,li2012hybrid,xing2005high,xing2006high,zhang2023high,zhu2016new}, finite volume methods \cite{chen2017new,chen2022unified,noelle2007high,qian2022positivity,zhang2023structure}, and DG methods \cite{bokhove2005flooding,bunya2009wetting,du2019well,xing2010positivity,
zhang2021high,
CiCP-31-94,zhang2010positivity}.

We consider the well-balanced fifth-order finite volume WENO-AO scheme with the constant subtraction technique (CST) for SWEs in this paper, denoted as WENOAO-CST.
{
The scaling positivity-preserving limiters \cite{liu1996nonoscillatory,XU20092194,zhang2010maximum,zhang2010positivity} are used to preserve the positivity of water height which should be non-negative physically.}
The WENO-AO reconstruction \cite{BALSARA2016780} is used to construct the solution at the cell interface since the WENO-AO reconstruction has better resolution than the fifth-order WENO-ZQ and WENO-MR reconstructions for the simulation of small perturbation (cf. Figs.~\ref{figex3002}--\ref{figex3003} of Example \ref{small perturbation} in \S\ref{num_sec}).
The CST is adopted to achieve the well-balanced property of the method, such that both the flux gradient and source term { in the new pre-balanced form (denoted as the `` CST pre-balanced form")} vanish at the lake-at-rest steady state.
The CST was first proposed in \cite{yang2015well} to construct well-balanced central schemes for SWEs. Then, Du et al. \cite{du2019well} extended the CST to construct a well-balanced DG method for SWEs on unstructured meshes.
In this paper, we further explore the potential of the CST to develop a high-order well-balanced finite volume WENO method for SWEs.

Compared to the well-balanced WENO method by Xing and Shu (denoted as WENOJS-XS) \cite{xing2006new}, the well-balanced property of our WENOAO-CST method is automatically achieved since { {not only the flux gradient is balanced by the source term in the CST pre-balanced form (cf. \eqref{1D_eqa-1new}), but also}} both flux gradient and source term { {of the CST pre-balanced form}} tend to be zero when the lake-at-rest steady state is reached. However, the WENOJS-XS method uses high-order accurate numerical discretization of the source terms, which mimics the approximation of the flux term, so that the exact balance between the source term and the flux gradient can be achieved numerically at the steady state. To be specific, the WENOJS-XS method needs to use the same non-linear WENO weights computed from $h$ on $b$ to obtain corresponding reconstructions of $b$, and the numerical integration of the source term must be computed exactly by using a suitable Gauss quadrature rule.
It is worth pointing out that, unlike the WENOJS-XS method, the numerical integration { {for the  source term of the  CST pre-balanced form}} does not need to be computed exactly in the WENOAO-CST method, and we only need to use the numerical quadrature rule for the  source term to satisfy the fifth-order accuracy.
Thus, it is obvious that the procedure of the WENOAO-CST method is better implemented and the format becomes easier than the WENOJS-XS.
In addition, the WENOAO-CST method is more efficient than the WENOJS-XS method in the sense that the former leads to a smaller error than the latter for a fixed amount of CPU time (cf. Example \ref{accuracy test} and Example \ref{accuracy test-2D} in \S\ref{num_sec}).

The organization of the paper is as follows.  A well-balanced and positivity-preserving fifth-order finite volume WENOAO-CST method for the one-dimensional SWEs is developed in  \S\ref{WENO_CST_1D}, and the extension to two dimensions in a dimension-by-dimension manner is described in \S\ref{sec_2D}.
One- and two-dimensional numerical results are presented in \S\ref{num_sec} to show the accuracy, high resolution, well-balanced, and positivity-preserving properties
of the proposed WENOAO-CST method.
Finally, the conclusions and remarks are given in \S\ref{sec_con}.

\section{Well-balanced WENOAO-CST method for 1D SWEs}
\label{WENO_CST_1D}

In this section, we present a finite volume fifth-order WENOAO-CST method for 1D SWEs.
The method can preserve the lake-at-rest steady state solution (i.e., well-balanced) and the non-negativity of the water depth $h$ (i.e., positivity-preserving/PP).

The computational domain is $ \Omega=[a,b]$.
For simplicity of presentation, the uniform cells  $I_{i} = [x_{i-\ha},x_{i+\ha}],~i=1,...,N_x$ are used with mesh size $\Delta x=x_{i+\ha}-x_{i-\ha}$ and cell center $x_i=(x_{i-\ha}+x_{i+\ha})/2$.
{ {
Many researchers apply a special splitting technique for the source term (e.g., \cite{li2012hybrid,WANG20201387,xing2005high}) to design the high-order well-balanced method for SWEs.
To avoid the splitting technique for the source term, an alternative approach uses the pre-balanced form  \cite{CANESTRELLI2009834,li2014high,LIANG2009873}, and the 1D SWEs are given by the equilibrium variable $\mathbf{U}=(H=h+b,hu)^T$
\bea\label{1D_eqa-preb}
\frac{\partial}{\partial t}
\underbrace{\begin{bmatrix*}[c]
  H\\
  hu
\end{bmatrix*}}_{\mathbf{U}}
+\frac{\partial}{\partial x}
\underbrace{\begin{bmatrix*}[c]
hu\\
\frac{(hu)^2}{ H-b}+\ha gH^2-gHb
\end{bmatrix*}}_{\mathcal{F}(\mathbf{U})}
=
\underbrace{\begin{bmatrix*}[c]
  0\\
-gHb_x
\end{bmatrix*}}_{\mathcal{S}(H, b_{x})}.
\eea
Inspired by this idea, Yang et al. \cite{yang2015well} added a term $g\overline{H}b_x$ on both sides of the second equation in \eqref{1D_eqa-preb}, and get a new pre-balanced form as
\bea\label{1D_eqa-1new}
\frac{\partial}{\partial t}
\underbrace{\begin{bmatrix*}[c]
  H\\
  hu
\end{bmatrix*}}_{\mathbf{U}}
+\frac{\partial}{\partial x}
\underbrace{\begin{bmatrix*}[c]
hu\\
\frac{(hu)^2}{ H-b}+g(\overline{H}-H)b+\ha gH^2
\end{bmatrix*}}_{\bF(\mathbf{U})}
=
\underbrace{\begin{bmatrix*}[c]
  0\\
g(\overline{H}-H)b_x
\end{bmatrix*}}_{\bS(H, b_{x})},
\eea
where $\overline{H}=\overline{H}(t)=\frac{1}{|\Omega|}\int_{\Omega} H(x, t) d x$ is the global spatial average of the water surface, and independent on the spatial variable $x$.
We call the equation \eqref{1D_eqa-1new} as the `` CST pre-balanced form" for simplicity of presentation.
Obviously, the  CST pre-balanced form \eqref{1D_eqa-1new} is mathematically equivalent to the pre-balanced form \eqref{1D_eqa-preb} and the original form \eqref{swe-1d}, i.e., $\frac{\partial \bf U}{\partial t} = \bm{0} $ when $\bF(\mathbf{U})_x=\bS(H, b_{x})$. Interestingly, we have $\bF(\mathbf{U})_x=\bS(H, b_{x})=\bm{0}$ in \eqref{1D_eqa-1new} for the lake-at-rest steady state solution \eqref{steady state}
due to $H = \overline{H}$.
Let $\mathcal{M}(u)= \frac{(hu)^2}{ H-b}+g(\overline{H}-H)b+\ha gH^2$, we can rewrite \eqref{1D_eqa-1new} as
\bea\label{1D_eqa-1}
\frac{\partial}{\partial t}
\begin{bmatrix*}[c]
  H\\
  hu
\end{bmatrix*}
+\frac{\partial}{\partial x}
\begin{bmatrix*}[c]
hu\\
\mathcal{M}(u)
\end{bmatrix*}
=
\begin{bmatrix*}[c]
  0\\
g(\overline{H}-H)b_x
\end{bmatrix*}.
\eea
}}

Given the cell average of $\bU$ over each cell $I_i$
\be
\overline{\bU}_{i}(t)=\frac{1}{\Delta x} \int_{I_{i}} \bU(x, t) d x,
\ee
we integrate (\ref{1D_eqa-1}) over $ I_i$ and get the semi-discrete scheme
\begin{flalign}
&\frac{d}{d t}\overline{\bU}_{i}(t)=-\frac{1}{\Delta x}\left(\hat{\bF}_{i+\ha}-\hat{\bF}_{i-\ha}\right)+\frac{1}{\Delta x}\int_{I_i}\bS(H,b_{x})dx,\label{semi-discrete_2}
\end{flalign}
where $\hat{\bF}_{i+\ha}$ is the Lax-Friedrichs numerical fluxes
\bea\label{LF-flux}
\hat{\bF}_{i+\ha}
=
\ha\left(\begin{bmatrix*}[c]
(hu)^-_{i+\ha} \\
\mathcal{M}(u)^-_{i+\ha}
\end{bmatrix*}
+\begin{bmatrix*}[c]
(hu)^+_{i+\ha} \\
\mathcal{M}(u)^+_{i+\ha}
\end{bmatrix*}\right)
-\frac{\alpha}{2}\left(\begin{bmatrix*}[c]
h^+_{i+\ha} \\
(hu)^+_{i+\ha}
\end{bmatrix*}
-\begin{bmatrix*}[c]
h^-_{i+\ha} \\
(hu)^-_{i+\ha}
\end{bmatrix*}\right),
\eea
and
\begin{align*}
&h^{\pm}_{i+\ha} = H^{\pm}_{i+\ha}-b^{\pm}_{i+\ha},\quad\quad
\alpha=\max_i(|\overline{u}_i|+\sqrt{g\overline{h}_i}),\\
&\mathcal{M}(u)^{\pm}_{i+\ha}=\frac{\left((hu)^{\pm}_{i+\ha}\right)^2}{ h^\pm_{i+\ha}}+g\left(\overline{H}(t)-H^\pm_{i+\ha}\right)b^\pm_{i+\ha}+\ha g\left(H^\pm_{i+\ha}\right)^2.
\end{align*}
Notice that we replace $H$ by $h$ { {in the first equation of \eqref{LF-flux}}}, and it becomes easy to show the positivity-preserving property, see \S\ref{sub2.2}.

The four-point Gauss-Lobatto quadrature rule is used to approximate the source term
\be\label{source_1D}
\frac{1}{\dlt x}\int_{I_i}\bS\left(H,b_{x}\right)dx = \sum_{l=1}^4\omega_l \bS\left(H_i^l,\left(b_{x}\right)_{i}^l\right),
\ee
where $H_{i}^l = H(x_i^l)$, $\left(b_{x}\right)_{i}^{l} = b_{x} (x_i^l)$,
and the points
$\left\{x_i^l\right\}_{l=1}^4= \left\{x_{i-\ha},x_{i-\frac{\sqrt{5}}{10}},x_{i+\frac{\sqrt{5}}{10}},x_{i+\ha}\right\}$
with the quadrature weights $\omega_1 =\omega_4 = \frac{1}{12}$ and $\omega_2 =\omega_3 = \frac{5}{12}$.

The scheme \eqref{semi-discrete_2} is not well-balanced in general since the right-hand side of \eqref{semi-discrete_2} is not zero for the lake-at-rest steady state.
Specially, the numerical flux term $\hat{\bF}_{i+\ha}-\hat{\bF}_{i-\ha}$ does not equal to zero since $h_{i +\ha}^{+} \neq h_{i +\ha}^{-}$.
We follow the idea of Audusse et al. \cite{audusse2004fast} and set
\be \label{hydrostatic_reconstruction}
h^{*,\pm}_{i+\ha}=\max\left(0,H^\pm_{i+\ha}-\max \left(b^+_{i+\ha},b^-_{i+\ha} \right) \right).
\ee
{ For the lake-at-rest steady state, we can enforce $h^{*,-}_{i+\ha} = h^{*,+}_{i+\ha}$ and $h^{*,\pm}_{i+\ha} \geqslant 0$ by \eqref{hydrostatic_reconstruction}}. These properties will be used in the proof for the well-balanced and positivity-preserving properties of the method.

Replacing $h^{\pm}_{i+\ha}$ by $h^{*,\pm}_{i+\ha}$ in the numerical fluxes \eqref{LF-flux} leads to
\bea\label{LF-flux-m}
\hat{\bF}^{*}_{i+\ha}
=\begin{bmatrix*}[c]
\hat{\bF}^{*,H}_{i+\ha}\\
\hat{\bF}^{*,hu}_{i+\ha}
\end{bmatrix*}=
\ha\left(\begin{bmatrix*}[c]
h^{*,-}_{i+\ha}u^{-}_{i+\ha} \\
\mathcal{M}(u)^{*,-}_{i+\ha}
\end{bmatrix*}
+\begin{bmatrix*}[c]
h^{*,+}_{i+\ha}u^{+}_{i+\ha} \\
\mathcal{M}(u)^{*,+}_{i+\ha}
\end{bmatrix*}\right)
-\frac{\alpha}{2}\left(\begin{bmatrix*}[c]
h^{*,+}_{i+\ha} \\
h^{*,+}_{i+\ha}u^{+}_{i+\ha}
\end{bmatrix*}
-\begin{bmatrix*}[c]
h^{*,-}_{i+\ha}\\
h^{*,-}_{i+\ha}u^{-}_{i+\ha}
\end{bmatrix*}\right),
\eea
where
$$
\mathcal{M}(u)^{*,\pm}_{i+\ha}=\frac{\left(h^{*,\pm}_{i+\ha}u^{\pm}_{i+\ha}\right)^2}{ h^{*,\pm}_{i+\ha}}+g\left(\overline{H}(t)-H^\pm_{i+\ha}\right)b^\pm_{i+\ha}+\ha g\left(H^\pm_{i+\ha}\right)^2.
$$
Finally, we can obtain the actual semi-discrete scheme
\begin{flalign}
&\frac{d}{d t}\overline{\bU}_{i}(t)=\mathcal{L}(\overline{\bU}_i):=-\frac{1}{\Delta x}\left(\hat{\bF}^{*}_{i+\ha}-\hat{\bF}^{*}_{i-\ha}\right)+\frac{1}{\dlt x}\int_{I_i}\bS\left(H,b_{x}\right)dx.\label{semi-discrete-wb}
\end{flalign}
To compute the source term $\int_{I_i}\bS(H,b_x)dx$ and numerical fluxes term $\hat{\bF}_{i+\ha}-\hat{\bF}_{i-\ha}$, one needs to reconstruct  certain quantities in \eqref{source_1D} and \eqref{LF-flux-m}
$$H_{i +\ha}^{\pm},\quad (hu)_{i +\ha}^{\pm},\quad b_{i +\ha}^{\pm},\quad H_{i}^l, \quad\left(b_{x}\right)_{i}^{l}, \quad l=1,\cdots, 4,$$
based on the given data $\{ \overline {\bU}_i(t) \}_i$ and $\{ \overline {b}_i \}_i$. These quantities are obtained by the fifth-order WENO-AO reconstruction \cite{BALSARA2016780} which is given in Appendix A
and $h_{i +\ha}^{\pm} = H_{i +\ha}^{\pm}-b_{i +\ha}^{\pm}$.
Note that the WENO reconstructions for $b$ and $b_x$ are implemented only once at the initial time.

\begin{prop}
\label{wb}
The semi-discrete finite volume WENOAO-CST scheme (\ref{semi-discrete-wb}) for
1D SWEs is well-balanced, i.e., it preserves the lake-at-rest steady state $H=h+b = C,~hu=0$.
\end{prop}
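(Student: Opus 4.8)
The plan is to verify directly that the spatial operator $\mathcal{L}(\overline{\bU}_i)$ in \eqref{semi-discrete-wb} vanishes identically at the lake-at-rest state, so that $\frac{d}{dt}\overline{\bU}_i = \bm{0}$ and the discrete solution stays frozen at \eqref{steady state}. First I would record the consequences of $H \equiv C$, $hu \equiv 0$ for every reconstructed quantity. Since the WENO-AO reconstruction of Appendix A is exact on constant data, all reconstructed interface and quadrature values of $H$ equal $C$, i.e.\ $H^{\pm}_{i+\ha} = C$ and $H_i^l = C$ for $l=1,\dots,4$; from $hu \equiv 0$ we get $(hu)^{\pm}_{i+\ha} = 0$ and hence $u^{\pm}_{i+\ha} = 0$. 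Crucially, the global average satisfies $\overline{H}(t) = \frac{1}{|\Omega|}\int_{\Omega} C\,dx = C$, so the CST factor $\overline{H} - H$ vanishes at every interface and every quadrature node simultaneously.

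Next I would dispatch the source term. Because $\bS(H,b_x) = (0,\, g(\overline{H}-H)b_x)^T$ and $\overline{H} - H_i^l = C - C = 0$ at each Gauss--Lobatto point, the quadrature \eqref{source_1D} gives $\frac{1}{\dlt x}\int_{I_i}\bS\,dx = \bm{0}$ regardless of the weights $\omega_l$ or the reconstructed $(b_x)_i^l$. Then I would treat the numerical-flux difference. For the continuity component of $\hat{\bF}^{*}_{i+\ha}$ in \eqref{LF-flux-m}, the central part vanishes since $u^{\pm}_{i+\ha}=0$, and the viscous part vanishes because the hydrostatic reconstruction \eqref{hydrostatic_reconstruction} forces $h^{*,-}_{i+\ha} = h^{*,+}_{i+\ha}$ (its right-hand side is sign-independent once $H^{\pm}_{i+\ha}=C$); the value of $\alpha$ is then irrelevant, so $\hat{\bF}^{*,H}_{i+\ha}=0$ at each interface.

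For the momentum component I would examine $\mathcal{M}(u)^{*,\pm}_{i+\ha}$: the kinetic term $\frac{(h^{*,\pm}u^{\pm})^2}{h^{*,\pm}}$ and the viscous term both drop out with $u=0$, the bottom-coupling term $g(\overline{H}-H^{\pm}_{i+\ha})b^{\pm}_{i+\ha}$ is annihilated by the CST identity $\overline{H}=H=C$, and only $\ha g(H^{\pm}_{i+\ha})^2 = \ha gC^2$ survives on both sides. Hence $\hat{\bF}^{*,hu}_{i+\ha} = \ha gC^2$ is the same constant at every interface, and the difference $\hat{\bF}^{*}_{i+\ha} - \hat{\bF}^{*}_{i-\ha}$ telescopes to $\bm{0}$. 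Combining the two vanishing contributions yields $\mathcal{L}(\overline{\bU}_i)=\bm{0}$.

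The step I expect to matter most is not any single computation but the structural observation that makes them all trivial: the single algebraic identity $\overline{H}=H=C$ simultaneously kills the discrete source term and the topography-dependent part of the momentum flux, so no delicate cancellation between separately discretized source and flux terms is required (in contrast to WENOJS-XS). The only nontrivial facts I rely on are the constant-preservation property of the WENO-AO reconstruction and the sign-independence $h^{*,-}_{i+\ha}=h^{*,+}_{i+\ha}$ of \eqref{hydrostatic_reconstruction}; once these are in hand the well-balanced property is automatic.
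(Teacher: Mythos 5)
Your proposal is correct and takes essentially the same route as the paper's own proof: constant preservation of the WENO-AO reconstruction gives $H^{\pm}_{i+\ha}=C$ and $(hu)^{\pm}_{i+\ha}=0$, the identity $\overline{H}=H=C$ simultaneously annihilates the source-term quadrature and the bottom-coupling term in $\mathcal{M}(u)^{*,\pm}_{i+\ha}$, the hydrostatic reconstruction \eqref{hydrostatic_reconstruction} yields $h^{*,-}_{i+\ha}=h^{*,+}_{i+\ha}$ so the continuity flux vanishes, and the momentum flux reduces to the constant $\ha gC^2$ whose difference telescopes to zero. There is nothing substantively different to compare.
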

\begin{proof}
For the  lake-at-rest steady state $H=h+b = C$ and $hu=0$, we have
\ben
\overline{H} = \overline{H}_i = H =  C,
\quad  (\overline{hu})_i = hu = 0,
\een
and
$$\int_{I_i}\bS(H,b_x)dx =
\begin{bmatrix*}[c]
0 \\
\int_{I_i}g(\overline{H}-H)b_x dx
\end{bmatrix*}
=\begin{bmatrix*}[c]
0 \\
0
\end{bmatrix*}.$$
From the WENO reconstruction procedures, we can obtain
\ben
H_{i +\ha}^{\pm} = C,\quad \quad (hu)_{i +\ha}^{\pm} = 0.
\een
Moreover, we have
$ {h}^{*,-}_{i+\ha}={h}^{*,+}_{i+\ha},~
  h_{i+\ha}^{*,-}u_{i+\ha}^- =h_{i+\ha}^{*,+}u_{i+\ha}^+= 0
$  based on \eqref{hydrostatic_reconstruction},
and the numerical fluxes $\hat{\bF}^*_{i+\ha}$ satisfy
 \begin{align*}
\hat{\bF}^{*,H}_{i+\ha}
 &=\frac{1}{2}\left(h_{i+\ha}^{*,-}u_{i+\ha}^- + h_{i+\ha}^{*,+}u_{i+\ha}^+\right)
 -\frac{\alpha}{2}\left(h_{i+\ha}^{*,+}-h_{i+\ha}^{*,-}\right)=0,
\\
\hat{\bF}^{*,hu}_{i+\ha}
&=
\frac{1}{2}\left(\frac{1}{2}g(H_{i+\ha}^{-})^2 + \frac{1}{2}g(H_{i+\ha}^{+})^2\right)
-\frac{\alpha}{2}\left(h_{i+\ha}^{*,+}u_{i+\ha}^+-h_{i+\ha}^{*,-}u_{i+\ha}^-\right) =  \frac{1}{2} g C^2.
 \end{align*}
 Similarly, we know that
$\hat{\bF}^*_{i-\ha}$ satisfy
 \begin{align*}
\hat{\bF}^{*,H}_{i-\ha}
&=\frac{1}{2}\left(h_{i-\ha}^{*,-}u_{i-\ha}^- + h_{i-\ha}^{*,+}u_{i-\ha}^+\right)
-\frac{\alpha}{2}\left(h_{i-\ha}^{*,+}-h_{i-\ha}^{*,-}\right)=0,\\
\hat{\bF}^{*,hu}_{i-\ha}
&
=\frac{1}{2}\left(\frac{1}{2}g(H_{i-\ha}^{-})^2+ \frac{1}{2}g(H_{i-\ha}^{+})^2\right)
-\frac{\alpha}{2}\left(h_{i-\ha}^{*,+}u_{i-\ha}^+-h_{i-\ha}^{*,-}u_{i-\ha}^-\right) =  \frac{1}{2} g C^2.
 \end{align*}
And then, we have $\hat{\bF}^{*}_{i+\ha}-\hat{\bF}^{*}_{i-\ha}=\mathbf{0}$ and $\mathcal{L}(\overline{\bU}_i) = \mathbf{0}$ . Thus, the scheme (\ref{semi-discrete-wb}) is well-balanced.
\end{proof}

\vspace{9pt}

We now list a few benefits of our WENOAO-CST method compared with the WENOJS-XS method \cite{xing2006new}.
{
Firstly,
when the lake-at-rest steady state is reached, the CST pre-balanced form can easily balance the flux gradient and source term because both the flux gradient term and source term are zero, i.e., $\bF(\mathbf{U})_x=\bS(H, b_x)=\bm{0}$.}
Secondly, we use the four-point Gauss-Lobatto quadrature rule only to satisfy the fifth-order accuracy of our WENOAO-CST method. However, the numerical integration for the source term in the WENOJS-XS method should be exact for the polynomials with a suitable Gauss quadrature rule for the well-balanced property.
Thirdly, the WENO reconstruction for  $b$ is implemented only once at the initial time in the WENOAO-CST method, while the reconstruction of $b$ needs to be implemented and updated at each time level for the WENOJS-XS method, and the same non-linear weights of $h$ in the WENO reconstruction are used for the reconstruction of $b$. It leads to our WENOAO-CST method being much simpler and cost-efficient than the WENOJS-XS method.

The third-order strong stability preserving (SSP) Runge-Kutta scheme \cite{shu1988efficient} is applied to discretize (\ref{semi-discrete-wb}) in time
\be \label{TVD}
\begin{cases}
\overline{\bU}_i^{(1)}=\overline{\bU}_i^{n}+\Delta t \mathcal{L}\left(\overline{\bU}_i^{n}\right), \\
\overline{\bU}_i^{(2)}=\frac{3}{4} \overline{\bU}_i^{n}+\frac{1}{4} \overline{\bU}_i^{(1)}+\frac{1}{4} \Delta t \mathcal{L}\left(\overline{\bU}_i^{(1)}\right),\\
\overline{\bU}_i^{n+1}=\frac{1}{3} \overline{\bU}_i^{n}+\frac{2}{3} \overline{\bU}_i^{(2)}+\frac{2}{3} \Delta t \mathcal{L}\left(\overline{\bU}_i^{(2)}\right).
\end{cases}
\ee

\subsection{The calculation of $\mathcal{L}(\overline{\bU}_i)$}
\label{sec:rec-flow}

In this section, we present the calculation of $\mathcal{L}(\overline{\bU}_i)$ in the following four steps.

\begin{itemize}
\item [\textbf{Step 1.}]
At the initial time $t^0$,
reconstruct the quantities $b_{i +\ha}^{\pm}$ and $\left(b_{x}\right)_{i}^{l}, l=1,\cdots, 4,$ in each cell $I_i$  based on the cell averages $\{\overline{b}_i \}_i$ by the fifth-order WENO-AO reconstruction.
\item [\textbf{Step 2.}]
At each time step $t^n,~n=0,1,...$, reconstruct the quantities
$$H_{i +\ha}^{\pm}, \quad (hu)_{i +\ha}^{\pm}, \quad H_{i}^{l}, \quad l=1,..., 4,$$
in each cell $I_i$ based on the cell averages $\{ \overline{H}_i^{n}\}_i $ and $\{\overline{hu}_i^{n} \}_i$ by the fifth-order WENO-AO reconstruction,
and let
$h_{i+ \ha}^{\pm}=H_{i +\ha}^{\pm}-b_{i+ \ha}^{\pm}$.

\item [\textbf{Step 3.}]
Compute $h_{i+ \ha}^{*,\pm}$ by (\ref{hydrostatic_reconstruction}).
\item [\textbf{Step 4.}]
Compute the source term $\int_{I_i}\bS(H,b_x)dx$ by \eqref{source_1D} and the flux $\hat{\bF}^*_{i+\ha}$ by (\ref{LF-flux-m}).
Finally, we can obtain $\mathcal{L}\left(\overline{\bU}_i^{n}\right)$ by (\ref{semi-discrete-wb}).
\end{itemize}

\subsection{Preservation of positivity for water height $h$}\label{sub2.2}

Another challenge in the numerical simulation for the SWEs is to preserve the nonnegativity of the water height $h$ in the computation.
The dry area often emerges in the simulation of dam breaks, flood wave flows, and run-up phenomena over shores and sea defense structures. Standard numerical methods may fail around the dry area and result in non-physical negative water height.
In this section, we show that the WENOAO-CST method with the scaling PP limiter can preserve the non-negativity of the water height $h$. {  For further details about the PP limiter, we refer to \cite{liu1996nonoscillatory,XU20092194,zhang2010maximum,zhang2010positivity}.}

At time level $t^n$, we first apply the Euler forward in time for the first equation of \eqref{semi-discrete-wb} and obtain
\begin{flalign}
&\overline{H}_{i}^{n+1}=\overline{H}_{i}^{n}-\lambda\left(\hat{\bF}^{*,h}_{i+\ha}
-\hat{\bF}^{*,h}_{i-\ha}
\right), \quad  \lambda = \frac{\Delta t}{\Delta x}.\label{discrete_1}
\end{flalign}
Due to $\overline{b}_{i}^{n+1} = \overline{b}_{i}^{n}$, \eqref{discrete_1} becomes
\begin{flalign}
&\overline{h}_{i}^{n+1}=\overline{h}_{i}^{n}-\lambda\left(\hat{\bF}^{*,h}_{i+\ha}
-\hat{\bF}^{*,h}_{i-\ha}\right).     \label{discrete_10}
\end{flalign}
Following the idea in \cite{xing2011high}, we introduce the  auxiliary variable
\begin{flalign}
\xi_{i}=\frac{\overline{h}_{i}^{n}-w_{1} h_{i-\frac{1}{2}}^{+}-w_{4} h_{i+\frac{1}{2}}^{-}}{1-w_{1}-w_{4}}, \label{Xi}
\end{flalign}
and have the following result.
\begin{prop}\label{prop_ahpha_1D}
Consider the scheme (\ref{discrete_10}) satisfied by the cell averages of the water height. If  $\overline{h}_i^n,~h_{i-\frac{1}{2}}^{\pm},~ h_{i+\frac{1}{2}}^{\pm}$ and  $\xi_{i}$ defined in (\ref{Xi}) are all non-negative, then  $\overline{h}_{i}^{n+1}$ is also non-negative under the CFL condition
\be \label{CFL_con}
\lambda \alpha \leqslant w_{1}.
\ee
\end{prop}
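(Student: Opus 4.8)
The plan is to follow the convex-decomposition strategy of Zhang and Shu used in \cite{xing2011high}, rewriting $\overline{h}_i^{n+1}$ as a sum of manifestly non-negative terms. The starting point is the observation that the definition \eqref{Xi} of $\xi_i$ is nothing but the four-point Gauss--Lobatto quadrature applied to the degree-four reconstruction polynomial of $h$ on $I_i$, solved for the interior contribution: since this quadrature is exact for that polynomial and its two endpoints are $x_{i\mp\ha}$ with equal weights $w_1=w_4$, we have the exact splitting
\begin{align*}
\overline{h}_i^{n}=(1-w_1-w_4)\,\xi_i+w_1\,h_{i-\ha}^{+}+w_4\,h_{i+\ha}^{-},
\end{align*}
where $h_{i-\ha}^{+}$ and $h_{i+\ha}^{-}$ are the two boundary values of $I_i$, and $(1-w_1-w_4)\xi_i$ collects the interior nodes; under the hypotheses this term is non-negative, and $1-w_1-w_4=5/6>0$.

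Next I would insert this splitting into \eqref{discrete_10} and expand the two Lax--Friedrichs fluxes from \eqref{LF-flux-m}. Writing $\hat{\bF}^{*,h}_{i+\ha}=\ha h^{*,-}_{i+\ha}u^{-}_{i+\ha}+\ha h^{*,+}_{i+\ha}u^{+}_{i+\ha}-\frac{\alpha}{2}\big(h^{*,+}_{i+\ha}-h^{*,-}_{i+\ha}\big)$ and regrouping so that each interface value splits into a contribution from the states reconstructed inside $I_i$ and one from its neighbours, gives
\begin{align*}
\overline{h}_i^{n+1}=(1-w_1-w_4)\xi_i
&+\Big[w_1 h_{i-\ha}^{+}+\tfrac{\lambda}{2}h^{*,+}_{i-\ha}\big(u^{+}_{i-\ha}-\alpha\big)\Big]
+\Big[w_4 h_{i+\ha}^{-}-\tfrac{\lambda}{2}h^{*,-}_{i+\ha}\big(u^{-}_{i+\ha}+\alpha\big)\Big]\\
&+\tfrac{\lambda}{2}h^{*,+}_{i+\ha}\big(\alpha-u^{+}_{i+\ha}\big)
+\tfrac{\lambda}{2}h^{*,-}_{i-\ha}\big(u^{-}_{i-\ha}+\alpha\big).
\end{align*}

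Then I would show that all five bracketed quantities are non-negative. The last two neighbour contributions are products of $h^{*,\pm}\geq0$ with factors $\alpha\mp u^{\pm}$ that are non-negative because $\alpha$ dominates every reconstructed velocity, which is where $|u^{\pm}_{i\pm\ha}|\leq\alpha$ enters. For the two inner brackets I would use the hydrostatic-reconstruction bound $0\leq h^{*,\pm}_{i\pm\ha}\leq h^{\pm}_{i\pm\ha}$, immediate from \eqref{hydrostatic_reconstruction}: in the right bracket $\tfrac{\lambda}{2}(u^{-}_{i+\ha}+\alpha)\geq0$ lets me replace $h^{*,-}_{i+\ha}$ by the larger $h^{-}_{i+\ha}$ and then use $u^{-}_{i+\ha}+\alpha\leq2\alpha$, giving $\geq(w_4-\lambda\alpha)h^{-}_{i+\ha}$; symmetrically the left bracket is $\geq(w_1-\lambda\alpha)h^{+}_{i-\ha}$. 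Both are non-negative precisely under the CFL condition $\lambda\alpha\leq w_1=w_4$, which is \eqref{CFL_con}. Summing the five non-negative terms yields $\overline{h}_i^{n+1}\geq0$.

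I expect the only genuinely delicate point to be the velocity bound $|u^{\pm}_{i\pm\ha}|\leq\alpha$: the coefficient $\alpha$ in \eqref{LF-flux-m} is defined from cell averages, whereas the decomposition needs it to dominate the reconstructed point velocities, so one must either assume this at the reconstructed states or enlarge $\alpha$ accordingly. Everything else is the exact algebraic regrouping displayed above combined with the two elementary inequalities $h^{*}\leq h$ and $\alpha\geq|u|$.
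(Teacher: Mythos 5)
Your proof is correct and is essentially the paper's own argument: the paper performs the same splitting of $\overline{h}_i^n$ via $\xi_i$ and the same Lax--Friedrichs flux expansion, merely organizing the four flux contributions into two first-order scheme updates $Z_1$ and $Z_4$ (by adding and subtracting an intermediate flux $\hat{\bF}^h\big(h_{i-\ha}^{*,+},u_{i-\ha}^{+};h_{i+\ha}^{*,-},u_{i+\ha}^{-}\big)$) that are handled by an auxiliary positivity lemma, and expanding those updates reproduces exactly your five-term decomposition with the same ingredients $0\leq h^{*}\leq h$, $|u|\leq\alpha$, and $\lambda\alpha\leq w_1=w_4$. Your closing caveat --- that $\alpha$, defined from cell averages, must also dominate the reconstructed interface velocities $u^{\pm}_{i\mp\ha}$ --- is a genuine subtlety, but it is one the paper's proof shares implicitly rather than a gap in your argument relative to theirs.
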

\begin{proof}
The detailed proof is given in Appendix B.
\end{proof}

\vspace{8pt}

Next, we present the steps of the well-balanced PP WENOAO-CST method for the 1D SWEs.
The same steps as in \S\ref{sec:rec-flow} are used except for $\textbf{Step 3}$, so we omit the same steps and replace  $\textbf{Step 3}$ by  $\textbf{Step 3a}$ and $\textbf{Step 3b}$ which are given as follows.
\begin{itemize}
\item [\textbf{Step 3a.}]
Compute
$$\overline{h}_i^n = \overline{H}_i^n - \overline{b}_i^n, \quad
\xi_{i}=\frac{\overline{h}_{i}^{n}-w_{1} h_{i-\frac{1}{2}}^{+}-w_{4} h_{i+\frac{1}{2}}^{-}}{1-w_{1}-w_{4}},$$
and setup a small number $\eta=\min\limits_i\{10^{-13},\overline h_i\}$.
Then evaluate
\be
\tilde{h}_{i-\ha}^{+}=\theta\left(h_{i-\ha}^{+}-\overline{h}_i^n\right)+\overline{h}_i^n, \qquad \tilde{h}_{i+\ha}^{-}=\theta\left(h_{i+\ha}^{-}-\overline{h}_i^n\right)+\overline{h}_i^n, \nonumber
\ee
with
$$\theta=\min\left(1,\frac{\overline{h}_i^n-\eta}{\overline{h}_i^n-m_i}\right), \qquad m_i=\min\left(h_{i - \ha}^{+},h_{i + \ha}^{-},\xi_i\right). $$
Note that this PP limiter does not maintain the well-balanced property in general,
and we apply
\bea\label{corr-b}
\tilde{b}_{i - \ha}^{+}=H_{i -\ha}^{+}-\tilde{h}_{i - \ha}^{+}, \quad \tilde{b}_{i + \ha}^{-}=H_{i + \ha}^{-}-\tilde{h}_{i + \ha}^{-}
\eea
to get the new approximation for bottom topography \cite{zhang2021high}.
{ Obviously, for the lake-at-rest steady state \eqref{steady state}, we have
\bea\label{corr-b2}
\tilde{b}_{i - \ha}^{+}+\tilde{h}_{i - \ha}^{+}=H_{i -\ha}^{+}=C, \quad \tilde{b}_{i + \ha}^{-}+\tilde{h}_{i + \ha}^{-}=H_{i + \ha}^{-}=C.
\eea
Thus, the above positivity-preserving procedure can maintain the well-balanced property of the WENOAO-CST method.}

\item [\textbf{Step 3b.}]
 Compute $\tilde{h}_{i + \ha}^{*,\pm}$ by (\ref{hydrostatic_reconstruction}) and use them instead of $h_{i + \ha}^{*,\pm}$ in the WENOAO-CST method (\ref{semi-discrete-wb}) under the CFL condition
(\ref{CFL_con}).
\end{itemize}

Since the third-order SSP Runge-Kutta scheme is a convex combination of the Euler forward method, therefore our WENOAO-CST method \eqref{TVD} with PP limiter is positivity-preserving.

\section{Well-balanced WENOAO-CST method for 2D SWEs}
\label{sec_2D}

In this section, we propose a well-balanced finite volume fifth-order WENOAO-CST method for 2D SWEs in a dimension-by-dimension manner. Then, a well-balanced PP WENOAO-CST method is also provided to ensure that no negative water height is produced during the computation.

Consider the 2D SWEs as
\begin{equation}\label{swe-2d}
\frac{\partial}{\partial t}
\begin{bmatrix*}
  h\\
  hu\\
  hv\\
\end{bmatrix*} +\frac{\partial}{\partial x}
 \begin{bmatrix*}[c]
h u\\
h u^{2}+\frac{1}{2} g h^{2}\\
h u v\\
\end{bmatrix*} +
\frac{\partial}{\partial y}
\begin{bmatrix*}[c]
h v\\
h u v \\
h v^{2}+\frac{1}{2} g h^{2}\\
\end{bmatrix*}  = \begin{bmatrix*}[c]
0\\
-g h b_{x}\\
-g h b_{y}\\
\end{bmatrix*}.
\end{equation}

Define the computational domain $ \Omega=[a,b]\times[c,d]$.
Uniform meshes are used with the mesh sizes  $\Delta x$ and  $\Delta y$ in the  $x$ and $y$ direction, respectively. Each cell of the mesh is denoted as  $ I_{i ,j}=[x_{i-\ha}, x_{i+\ha}] \times[y_{j-\ha}, y_{j+\ha}]$, $i=1,...,N_x,~j=1,...,N_y$, with its cell center  $(x_{i}, y_{j})=\left((x_{i-\ha}+x_{i+\ha})/2, ~(y_{j-\ha}+y_{j+\ha})/2\right)$.
{
Similarly as the one-dimensional, we rewrite \eqref{swe-2d} into the CST pre-balanced form with equilibrium variable $(H=h+b,hu,hv)^T$ as}
\bea\label{2D_eqa-1}
&\frac{\partial}{\partial t}
\underbrace{\begin{bmatrix*}[c]
  H\\
  hu\\
  hv
\end{bmatrix*}}_{\mathbf{U}}
+\frac{\partial}{\partial x}
\underbrace{\begin{bmatrix*}[c]
hu\\
\mathcal{M}(u)\\
\frac{(h u)(h v)}{H-b}
\end{bmatrix*}}_{\bF(\mathbf{U})}
+\frac{\partial}{\partial y}
\underbrace{\begin{bmatrix*}[c]
hv\\
\frac{(h u)(h v)}{H-b}\\
\mathcal{M}(v)
\end{bmatrix*}}_{\bG(\mathbf{U})}=
\underbrace{\begin{bmatrix*}[c]
 0 \\
S_1(H, b_x)\\
S_2(H, b_y)
\end{bmatrix*}}_{\bS(H, b)},
\eea
where
\begin{align*}
&\overline{H}=\overline{H}(t)=\frac{1}{|\Omega|}\int_{\Omega} H(x,y, t) d xdy,\\
&\mathcal{M}(u)= \frac{(hu)^2}{ H-b}+g(\overline{H}-H)b+\ha gH^2,\quad
 S_1(H, b_x)=g(\overline{H}-H)b_x, \\
&\mathcal{M}(v)= \frac{(hv)^2}{ H-b}+g(\overline{H}-H)b+\ha gH^2,\quad
S_2(H, b_y)=g(\overline{H}-H)b_y.
\end{align*}
We integrate (\ref{2D_eqa-1})  over $ I_{i,j}$ and get the semi-discrete scheme
\begin{flalign}
\frac{d \overline{\bU}_{i,j}(t)}{d t}=&
-\frac{1}{\Delta x\Delta y}
\left(\int_{y_{j-\ha}}^{y_{j+\ha}}\hat{\bF}^* \left(\bU_{i+\ha}(y)\right)dy
-\int_{y_{j-\ha}}^{y_{j+\ha}}\hat{\bF}^* \left(\bU_{i-\ha}(y)\right)dy \right) \nonumber \\
& -\frac{1}{\Delta x\Delta y}\left(\int_{x_{i-\ha}}^{x_{i+\ha}}\hat{\bG}^*\left(\bU_{j+\ha}(x) \right)dx
-\int_{x_{i-\ha}}^{x_{i+\ha}}\hat{\bG}^*\left(\bU_{j-\ha}(x) \right)dx\right) \nonumber
\\
&+\frac{1}{\Delta x\Delta y}\int_{I_{i,j}}\bS(H,b)dx dy,\label{semi-discrete_3}
\end{flalign}
where $\overline{\bU}_{i,j}(t) \approx \frac{1}{\Delta x\Delta y} \int_{I_{i,j}} \bU(x,y,t) dx dy$, $\bU_{i+\ha}(y) \approx \bU(x_{i+\ha},y)$, $\bU_{j+\ha}(x) \approx \bU(x,y_{j+\ha})$.
$\hat{\bF}^* \left(\bU_{i+\ha}(y)\right)$ and $\hat{\bG}^*\left(\bU_{j+\ha}(x) \right)$ are the Lax-Friedrichs fluxes defined on edges $(x_{i+\ha},y)$ and $(x,y_{j+\ha})$, respectively, viz.
\begin{align*}
&\hat{\bF}^* \left(\bU_{i+\ha}(y)\right)
=\frac{1}{2} \left(
 \bF\left(\bU_{i+\ha}^{*,-}(y)\right)
+\bF\left(\bU_{i+\ha}^{*,+}(y)\right)-
\alpha_1\left(\bV_{i+\ha}^{*,+}(y)-\bV_{i+\ha}^{*,-}(y)\right)
\right), \\
&\hat{\bG}^*\left(\bU_{j+\ha}(x) \right)
=\frac{1}{2}\left(
 \bG\left(\bU_{j+\ha}^{*,-}(x) \right)
+\bG\left(\bU_{j+\ha}^{*,+}(x) \right)-
\alpha_2\left(\bV_{j+\ha}^{*,+}(x)-\bV_{j+\ha}^{*,-}(x)\right)\right),\\
\nonumber
&~~~~\bU_{i+\ha}^{*,\pm}(y)=\left[\begin{array}{ccc}
H_{i+\ha}^{\pm}(y) \\
h_{i+\ha}^{*,\pm}(y)u_{i+\ha}^{\pm}(y)\\
h_{i+\ha}^{*,\pm}(y)v_{i+\ha}^{\pm}(y)
\end{array}\right],\quad
\bU_{j+\ha}^{*,\pm}(x)=\left[\begin{array}{ccc}
H_{j+\ha}^{\pm}(x) \\
h_{j+\ha}^{*,\pm}(x)u_{j+\ha}^{\pm}(x)\\
h_{j+\ha}^{*,\pm}(x)v_{j+\ha}^{\pm}(x)
\end{array}\right],
\nonumber \\
 &~~~~\bV_{i+\ha}^{*,\pm}(y)=\left[\begin{array}{ccc}
h_{i+\ha}^{*,\pm}(y) \\
h_{i+\ha}^{*,\pm}(y)u^{\pm}_{i+\ha}(y)\\
h_{i+\ha}^{*,\pm}(y)v^{\pm}_{i+\ha}(y)
\end{array}\right],\quad
\bV_{j+\ha}^{*,\pm}(x)=\left[\begin{array}{ccc}
h_{j+\ha}^{*,\pm}(x) \\
h_{j+\ha}^{*,\pm}(x)u^{\pm}_{j+\ha}(x)\\
h_{j+\ha}^{*,\pm}(x)v^{\pm}_{j+\ha}(x)
\end{array}\right],
\end{align*}
\begin{flalign}
&~~~~h_{i+\ha}^{*,\pm}(y)=\max \left(0,H_{i+\ha}^\pm(y)
-\max \left(b_{i+\ha}^+(y),b_{i+\ha}^-(y) \right) \right), \label{2D_hy_re1}\\
&~~~~h_{j+\ha}^{*,\pm}(x)=\max \left(0,H_{j+\ha}^\pm(x)
-\max \left(b_{j+\ha}^+(x),b_{j+\ha}^-(x) \right) \right),\label{2D_hy_re2}
\end{flalign}
and $\alpha_1=\max\limits_{i,j}\left(|\overline{u}_{i,j}|+\sqrt{g\overline{h}_{i,j}}\right)$,~
$\alpha_2=\max\limits_{i,j}\left(|\overline{v}_{i,j}|+\sqrt{g\overline{h}_{i,j}}\right)$.

In our computation, the line integrals in the right of \eqref{semi-discrete_3} are computed using the three-point Gauss quadrature rule as
\begin{align*}
&
\mathbf{\Phi}_{i+\ha,j} = \frac{1}{\Delta y}\int_{y_{j-\ha}}^{y_{j+\ha}}\hat{\bF}^* \left(\bU_{i+\ha}(y)\right)dy
\approx
\sum_{k=1}^3\hat{\gamma}_{k}\hat{\bF}^*\left(\bU_{i+\ha}(\hat{y}_j^k)\right),\\
&\mathbf{\Psi}_{i,j+\ha}=\frac{1}{\Delta x}\int_{x_{i-\ha}}^{x_{i+\ha}}\hat{\bG}^*\left(\bU_{j+\ha}(x) \right)dx
\approx
\sum_{k=1}^3\hat{\gamma}_{k}\hat{\bG}^*\left(\bU_{j+\ha}(\hat{x}_i^k)\right),
\end{align*}
where $\hat{\gamma}_{1} =\hat{\gamma}_{3} = \frac{5}{18}$,~$\hat{\gamma}_{2} =\frac{8}{18}$ and
\begin{align*}
&\hat{\mathcal{G}}^{x,i}=\left\{\hat{x}_i^k\right\}_{k=1}^3=\left\{x_{i-\ha\sqrt{\frac{5}{3}}},~x_{i},~x_{i+\ha\sqrt{\frac{5}{3}}}\right\},
\\&
\hat{\mathcal{G}}^{y,j}=\left\{\hat{y}_j^k\right\}_{k=1}^3=\left\{y_{j-\ha\sqrt{\frac{5}{3}}},~y_{j},~y_{j+\ha\sqrt{\frac{5}{3}}}\right\}.
\end{align*}
Let
\begin{align*}
&\mathcal{G}^{x,i}=\left\{x_i^l\right\}_{l=1}^4= \left\{x_{i-\ha},~x_{i-\frac{\sqrt{5}}{10}},~x_{i+\frac{\sqrt{5}}{10}},~x_{i+\ha}\right\},
\\&\mathcal{G}^{y,j}=\left\{y_j^l\right\}_{l=1}^4= \left\{y_{j-\ha},~y_{j-\frac{\sqrt{5}}{10}},~y_{j+\frac{\sqrt{5}}{10}},~y_{j+\ha}\right\},
\end{align*}
and the quadrature weights $\omega_1 =\omega_4 = \frac{1}{12}$ and $\omega_2 =\omega_3 = \frac{5}{12}$.
The source terms are approximately by the 2D version quadrature formula (see Fig. \ref{fig-gauss})
\begin{align*}
&\frac{1}{\Delta x\Delta y}\int_{I_{i,j}}S_1(H,b_x)dx dy \approx \sum_{k=1}^3\sum_{l=1}^4\hat{\gamma}_{k}\omega_l S_1\left(H\left(x_i^l,\hat{y}_j^k\right),b_x\left(x_i^l,\hat{y}_j^k\right)\right),
\\
&
\frac{1}{\Delta x\Delta y}\int_{I_{i,j}}S_2(H,b_y)dxdy\approx \sum_{k=1}^3\sum_{l=1}^4\hat{\gamma}_{k}\omega_l S_2\left(H\left(\hat{x}_i^k,y_j^l\right),b_y\left(\hat{x}_i^k,y_j^l\right)\right).
\end{align*}

\begin{figure}[H]
\centering
\subfigure[$ x_i^l \in \mathcal{G}^{x,i},\ \hat y_j^k \in \hat{\mathcal{G}}^{y,j}$]{
\includegraphics[width=0.47\textwidth,trim=52 0 52 10,clip]{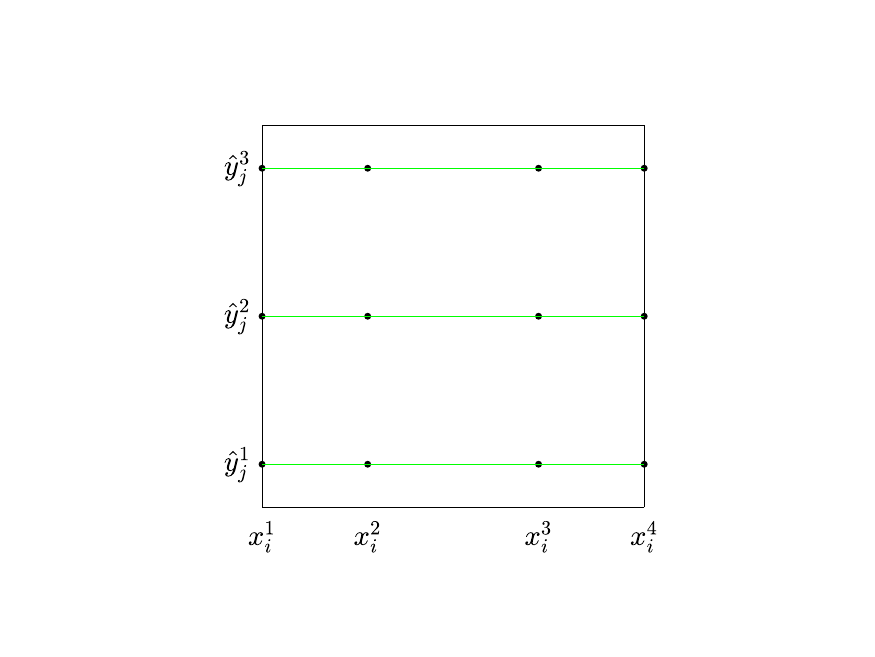}}
\subfigure[$ \hat x_i^k \in \hat{\mathcal{G}}^{x,i},\ y_j^l \in \mathcal{G}^{y,j}$]{
\includegraphics[width=0.47\textwidth,trim=52 0 52 10,clip]{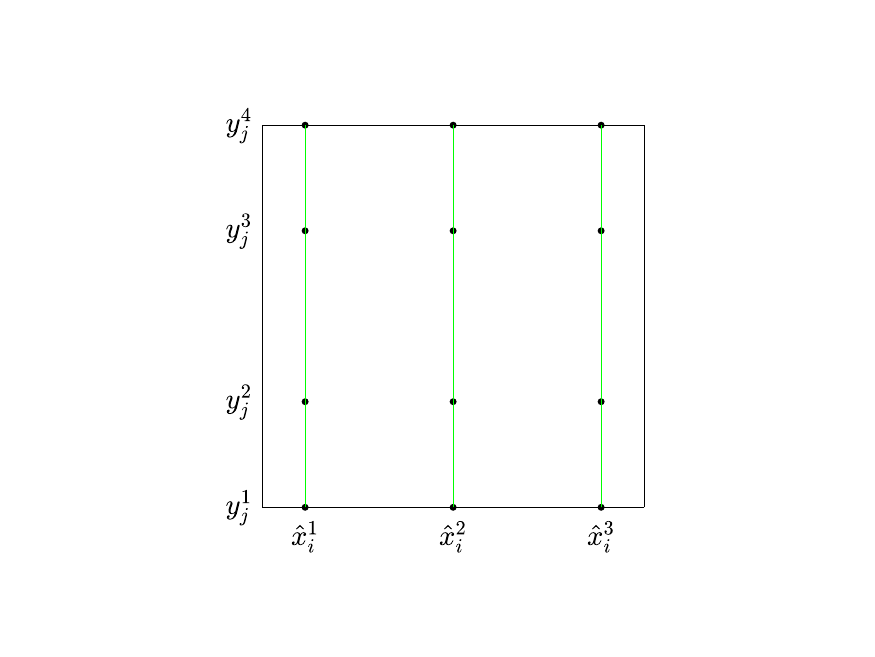}}
\caption{The quadrature points in two dimensions.  }
\label{fig-gauss}
\end{figure}

The quantities
$\bU^{\pm}(x_{i+\ha},\hat{y}_j^k)$,~
$\bU^{\pm}(\hat{x}_i^k,y_{j+\ha})$,~
$b^{\pm}(x_{i+\ha}$,~
$\hat{y}_j^k)$,~
$b^{\pm}(\hat{x}_i^k,y_{j+\ha})$,~
$H(x^{l}_i,\hat{y}_j^k)$,
$H(\hat{x}_i^k,y^{l}_j)$,
$b_x(x^{l}_i,\hat{y}_j^k)$,~
$b_y(\hat{x}_i^k,y^{l}_j)$,~
$x_i^l \in \mathcal{G}^{x,i},\ y_j^l \in \mathcal{G}^{y,j}$,~
$\hat x_i^k \in \hat{\mathcal{G}}^{x,i}$,~
$\hat y_j^k \in \hat{\mathcal{G}}^{y,j}$
are obtained by the fifth-order WENO-AO reconstruction in a dimension-by-dimension manner based on the given data $\{ \overline {\bU}_{i,j}(t) \}_{i,j}$ and $\{ \overline {b}_{i,j} \}_{i,j}$. Note that $h^{\pm} = H^{\pm}-b^{\pm}$ and the reconstructions for $b$, $b_x$, and $b_y$ are implemented only once at the initial time.

Similarly, the third-order strong stability preserving (SSP) Runge-Kutta scheme is applied to discretize (\ref{semi-discrete_3}) in time
\be \label{TVD-2d}
\begin{cases}
\overline{\bU}_{i,j}^{(1)}=\overline{\bU}_{i,j}^{n}+\Delta t \mathcal{L}\left(\overline{\bU}_{i,j}^{n}\right), \\
\overline{\bU}_{i,j}^{(2)}=\frac{3}{4} \overline{\bU}_{i,j}^{n}+\frac{1}{4} \overline{\bU}_{i,j}^{(1)}+\frac{1}{4} \Delta t \mathcal{L}\left(\overline{\bU}_{i,j}^{(1)}\right),\\
\overline{\bU}_{i,j}^{n+1}=\frac{1}{3} \overline{\bU}_{i,j}^{n}+\frac{2}{3} \overline{\bU}_{i,j}^{(2)}+\frac{2}{3} \Delta t \mathcal{L}\left(\overline{\bU}_{i,j}^{(2)}\right).
\end{cases}
\ee
where
\begin{flalign}
\mathcal{L}(\overline{\bU}_{i,j}):=&-\frac{1}{\Delta x}\left(\mathbf{\Phi}_{i+\ha,j}-\mathbf{\Phi}_{i-\ha,j}\right)-\frac{1}{\Delta y}\left(\mathbf{\Psi}_{i,j+\ha}-\mathbf{\Psi}_{i,j-\ha}\right)\nonumber\\
&+\frac{1}{\Delta x\Delta y}\int_{I_{i,j}}\bS(H,b)dx dy.\label{LU-2d}
\end{flalign}

\begin{prop}
The fully-discrete WENOAO-CST scheme (\ref{TVD-2d}) is well-balanced, i.e., it preserves the lake-at-rest steady state $H=h+b=C,~hu=0,~hv=0$.
\end{prop}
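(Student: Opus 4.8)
The plan is to mirror the argument of Proposition~\ref{wb}, adapting it to the dimension-by-dimension structure and the line integrals introduced by the three-point Gauss quadrature. At the lake-at-rest state we have $H \equiv C$ and $hu = hv = 0$ throughout $\Omega$, so that $\overline{H}(t) = \overline{H}_{i,j} = H = C$ and $(\overline{hu})_{i,j} = (\overline{hv})_{i,j} = 0$. Since the WENO-AO reconstruction is exact on constants, reconstructing $\{\overline{H}_{i,j}\}$ returns $H^{\pm} = C$ at every edge Gauss node and every interior source node, while reconstructing the zero momenta returns $(hu)^{\pm} = (hv)^{\pm} = 0$; consequently $h^{*,\pm}u^{\pm} = h^{*,\pm}v^{\pm} = 0$. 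The hydrostatic reconstructions \eqref{2D_hy_re1}--\eqref{2D_hy_re2} then force $h^{*,-} = h^{*,+}$ at each interface node, both equal to $\max(0,\,C - \max(b^+,b^-))$.

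First I would verify that the source integral vanishes. Because $\overline{H} - H = C - C = 0$ pointwise, both $S_1(H,b_x) = g(\overline{H}-H)b_x$ and $S_2(H,b_y) = g(\overline{H}-H)b_y$ are identically zero at every quadrature node, so the double sum approximating $\frac{1}{\Delta x\Delta y}\int_{I_{i,j}} \bS\,dxdy$ is exactly $\mathbf{0}$, independently of the bottom slope.

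Next I would evaluate the fluxes node-by-node. At each edge node $(x_{i+\ha},\hat y_j^k)$ the central part of $\hat{\bF}^*$ uses $\bF(\bU^{*,\pm}) = (h^{*,\pm}u^{\pm},\ \mathcal{M}(u)^{*,\pm},\ (h^{*,\pm}u^{\pm})(h^{*,\pm}v^{\pm})/h^{*,\pm})^T$: the first and third entries vanish because the momenta are zero, while $\mathcal{M}(u)^{*,\pm} = 0 + g(\overline{H}-H^\pm)b^\pm + \ha g(H^\pm)^2 = \ha g C^2$. The Lax--Friedrichs dissipation is proportional to $\bV^{*,+}-\bV^{*,-}$, whose entries are $h^{*,+}-h^{*,-}=0$ and the two vanishing momentum fluxes, hence it is zero. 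Thus $\hat{\bF}^*(\bU_{i+\ha}(\hat y_j^k)) = (0,\ \ha g C^2,\ 0)^T$ for every $k$; since this value is identical at all three nodes and $\sum_k \hat\gamma_k = 1$, the quadrature gives $\mathbf{\Phi}_{i+\ha,j} = (0,\ \ha g C^2,\ 0)^T$, independent of $i$, so $\mathbf{\Phi}_{i+\ha,j} - \mathbf{\Phi}_{i-\ha,j} = \mathbf{0}$. An identical computation in $y$ yields $\mathbf{\Psi}_{i,j+\ha} = (0,0,\ha g C^2)^T$ for all $j$, whence $\mathbf{\Psi}_{i,j+\ha} - \mathbf{\Psi}_{i,j-\ha} = \mathbf{0}$. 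Substituting the three pieces into \eqref{LU-2d} gives $\mathcal{L}(\overline{\bU}_{i,j}) = \mathbf{0}$, and because each stage of \eqref{TVD-2d} is a convex combination of forward-Euler steps with zero increment, $\overline{\bU}_{i,j}^{n+1} = \overline{\bU}_{i,j}^{n}$.

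The only genuinely new point relative to the 1D proof is the interplay between the transverse momentum flux and the Gauss rule: I must confirm that the cross terms $(hu)(hv)/(H-b)$ in the off-diagonal flux entries vanish (they do, since at least one momentum is zero) and that the constant value $\ha g C^2$ generated at every edge node survives the three-point quadrature unchanged, so that it cancels telescopically between opposite faces regardless of how $b$ varies along the edge. I expect this bookkeeping --- tracking which component carries the $\ha gC^2$ and checking that the weights preserve it exactly --- to be the main thing to get right; everything else is a direct transcription of Proposition~\ref{wb}.
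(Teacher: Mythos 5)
Your proposal is correct and takes essentially the same route as the paper's own proof: the paper likewise uses exactness of the reconstruction on the constant data to get $H^{\pm}=C$ and vanishing momenta at all quadrature points, invokes \eqref{2D_hy_re1}--\eqref{2D_hy_re2} to equalize $h^{*,\pm}$, evaluates $\hat{\bF}^{*}=\left(0,\ \ha gC^{2},\ 0\right)^{T}$ and $\hat{\bG}^{*}=\left(0,\ 0,\ \ha gC^{2}\right)^{T}$ at every edge point, and concludes $\mathcal{L}(\overline{\bU}_{i,j})=\mathbf{0}$. Your explicit bookkeeping of the Gauss weights summing to one and the closing remark that the SSP Runge--Kutta stages are convex combinations of zero-increment Euler steps are details the paper leaves implicit, but they do not alter the argument.
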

{ {
\begin{proof}
The proof is similar to the procedure in the Proposition \ref{wb}.
Specifically, for the  lake-at-rest steady state $H=C,~hu=0,~hv=0$, we have
\bean
&\overline{H}=\overline{H}_{ij}=H= C, \,\, (\overline{hu})_{ij}=hu = 0,\,\, (\overline{hv})_{ij}=hv = 0,
\eean
and
$$\int_{I_{ij}}\bS(H,b)dxdy =
\begin{bmatrix*}[c]
0 \\
\int_{I_{ij}}g(\overline{H}-H)b_x dxdy\\
\int_{I_{ij}}g(\overline{H}-H)b_y dxdy
\end{bmatrix*}
=\begin{bmatrix*}[c]
0 \\
0 \\
0
\end{bmatrix*}.$$
From the WENO reconstruction procedures, we have, for $ \forall y\in I_j=[y_{j-\ha}, y_{j+\ha}]$,
\begin{align*}
&H^{\pm}_{i-\ha}\left(y\right)
= C,\quad
(hu)^{\pm}_{i-\ha}\left(y\right) =0,\quad
(hv)^{\pm}_{i-\ha}\left(y\right) =0,\\
&H^{\pm}_{i+\ha}\left(y\right)
= C,\quad
(hu)^{\pm}_{i+\ha}\left(y\right) =0,\quad
(hv)^{\pm}_{i+\ha}\left(y\right) =0.
\end{align*}
Moreover, from \eqref{2D_hy_re1}-\eqref{2D_hy_re2} we get
\begin{align*}
&h^{*,-}_{i-\ha}\left(y\right) = h^{*,+}_{i-\ha}\left(y\right),\qquad \qquad \qquad \qquad\quad~~ h^{*,-}_{i+\ha}\left(y\right) = h^{*,+}_{i+\ha}\left(y\right),\\
&h^{*,-}_{i-\ha}\left(y\right)u^{-}_{i-\ha}\left(y\right)=h^{*,+}_{i-\ha}\left(y\right)u^{+}_{i-\ha}\left(y\right)=0,\quad
h^{*,-}_{i+\ha}\left(y\right)u^{-}_{i+\ha}\left(y\right)=h^{*,+}_{i+\ha}\left(y\right)u^{+}_{i+\ha}\left(y\right)=0, \\
&h^{*,-}_{i-\ha}\left(y\right)v^{-}_{i-\ha}\left(y\right)=h^{*,+}_{i-\ha}\left(y\right)v^{+}_{i-\ha}\left(y\right)=0,\quad
h^{*,-}_{i+\ha}\left(y\right)v^{-}_{i+\ha}\left(y\right)=h^{*,+}_{i+\ha}\left(y\right)v^{+}_{i+\ha}\left(y\right)=0.
\end{align*}
Hence, one can obtain
\begin{align*}
&\hat{\bF}^{*,H}
\left(\bU_{i+\ha}(y)\right)
=0,  \quad
\hat{\bF}^{*,hu}\left(\bU_{i+\ha}(y)\right)
= \frac{1}{2}g C^2,\quad
\hat{\bF}^{*,hv}\left(\bU_{i+\ha}(y)\right)
=0,
\\&
\hat{\bF}^{*,H}
\left(\bU_{i-\ha}(y)\right) = 0, \quad
\hat{\bF}^{*,hu}
\left(\bU_{i-\ha}(y)\right)= \frac{1}{2}g C^2, \quad
\hat{\bF}^{*,hv}
\left(\bU_{i-\ha}(y)\right) =0.
\end{align*}
This leads to
 \begin{align*}
\hat{\bF}^* \left(\bU_{i+\ha}(y)\right)-
\hat{\bF}^* \left(\bU_{i-\ha}(y)\right) = \mathbf{0},\quad \forall y\in I_j=[y_{j-\ha}, y_{j+\ha}],
\end{align*}
and
\begin{align*}
\mathbf{\Phi}_{i+\ha,j}-\mathbf{\Phi}_{i-\ha,j} = \mathbf{0}.
\end{align*}
Analogously, from the WENO reconstruction procedures, we have
 \begin{align*}
\hat{\bG}^* \left(\bU_{j+\ha}(x)\right)-
\hat{\bG}^* \left(\bU_{j-\ha}(x)\right) =\mathbf{0},
\quad \forall x\in I_i=[x_{i-\ha}, x_{i+\ha}],
\end{align*}
and
\begin{align*}
\mathbf{\Psi}_{i,j+\ha}-\mathbf{\Psi}_{i,j-\ha} = \mathbf{0}.
\end{align*}
Therefore, $\mathcal{L}(\overline{\bU}_{ij}) = \mathbf{0}$ and the scheme (\ref{LU-2d}) is well-balanced.
\end{proof}
\\
}}

\begin{prop}\label{prop_ahpha_2D}
If $\overline{h}_{i, j}^{n}$, $h^{ \pm}_{i-\frac{1}{2}} \left(\hat{y}_j^k\right)$,
$h^{ \pm}_{i + \frac{1}{2}} \left(\hat{y}_j^k\right)$, $h_{j - \frac{1}{2}}^{ \pm}\left(\hat{x}_i^k\right)$,  $h_{j + \frac{1}{2}}^{ \pm}\left(\hat{x}_i^k\right)$,  $\xi_{i,j}^{1}$ and $\xi_{i, j}^{2}$ are all non-negative in (\ref{TVD-2d}), then  $\overline{h}_{i, j}^{n+1}$ is also non-negative under the CFL condition
\be\label{CFL_con2d}
\frac{\Delta t}{\Delta x}\alpha_1+\frac{\Delta t}{\Delta y}\alpha_2 \leqslant {w}_{1} ,
\ee
where
\begin{align*}\label{xi1}
&\xi_{i,j}^1=\frac{\overline{h}_{i,j}^{n}-{w}_{1} \sum\limits_{k=1}^3\hat{\gamma}_{k}h_{i-\ha}^{+}\left(\hat{y}_j^k\right)-{w}_{4}\sum\limits_{k=1}^3\hat{\gamma}_{k} h_{i+\ha}^{-}\left(\hat{y}_j^k\right)}{1-{w}_{1}-{w}_{4}},
\\&
\xi_{i,j}^2=\frac{\overline{h}_{i,j}^{n}-{w}_{1} \sum\limits_{k=1}^3\hat{\gamma}_{k}h_{j-\ha}^{+}\left(\hat{x}_i^k\right)-{w}_{4}\sum\limits_{k=1}^3\hat{\gamma}_{k} h_{j+\ha}^{-}\left(\hat{x}_i^k\right)}{1-{w}_{1}-{w}_{4}}.
\end{align*}
\end{prop}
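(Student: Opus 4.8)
The plan is to mirror the one-dimensional argument of Proposition~\ref{prop_ahpha_1D} (Appendix B) and to reduce the two-dimensional update to a sum of two convex combinations, one per coordinate direction. First I would apply the Euler forward step to the first (water-surface) component of \eqref{LU-2d}. Since $\overline b_{i,j}$ is independent of time, the update for $\overline H_{i,j}$ coincides with the update for the water height, so that
\[
\overline h_{i,j}^{n+1}=\overline h_{i,j}^{n}-\lambda_x\big(\Phi^{H}_{i+\ha,j}-\Phi^{H}_{i-\ha,j}\big)-\lambda_y\big(\Psi^{H}_{i,j+\ha}-\Psi^{H}_{i,j-\ha}\big),\qquad \lambda_x=\tfrac{\Delta t}{\Delta x},\ \lambda_y=\tfrac{\Delta t}{\Delta y},
\]
where $\Phi^{H}_{i+\ha,j}=\sum_{k=1}^{3}\hat\gamma_k\,\hat{\bF}^{*,H}\big(\bU_{i+\ha}(\hat y_j^k)\big)$ and $\Psi^{H}_{i,j+\ha}=\sum_{k=1}^{3}\hat\gamma_k\,\hat{\bG}^{*,H}\big(\bU_{j+\ha}(\hat x_i^k)\big)$ are the first components of the Gauss-averaged Lax--Friedrichs fluxes along the four edges of $I_{i,j}$.

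The crux is a direction-wise splitting of the cell average. Setting $\mu=\lambda_x\alpha_1+\lambda_y\alpha_2$, which the CFL condition \eqref{CFL_con2d} bounds by $w_1$, I would write $\overline h_{i,j}^{n+1}=P_x+P_y$ with
\[
P_x=\frac{\lambda_x\alpha_1}{\mu}\,\overline h_{i,j}^{n}-\lambda_x\big(\Phi^{H}_{i+\ha,j}-\Phi^{H}_{i-\ha,j}\big),\qquad
P_y=\frac{\lambda_y\alpha_2}{\mu}\,\overline h_{i,j}^{n}-\lambda_y\big(\Psi^{H}_{i,j+\ha}-\Psi^{H}_{i,j-\ha}\big),
\]
which is an identity because $\lambda_x\alpha_1/\mu+\lambda_y\alpha_2/\mu=1$. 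Into $P_x$ I would substitute the Gauss--Lobatto-in-$x$/Gauss-in-$y$ representation of the cell average that underlies the definition of $\xi_{i,j}^1$, namely
\[
\overline h_{i,j}^{n}=w_1\sum_{k=1}^3\hat\gamma_k\, h^{+}_{i-\ha}(\hat y_j^k)+w_4\sum_{k=1}^3\hat\gamma_k\, h^{-}_{i+\ha}(\hat y_j^k)+(1-w_1-w_4)\,\xi_{i,j}^1,
\]
and into $P_y$ the analogous representation carrying $\xi_{i,j}^2$, obtained by exchanging the roles of $x$ and $y$. Both representations are valid because the tensor Gauss--Lobatto/Gauss quadrature reproduces the cell average of the reconstruction polynomial exactly.

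After this substitution $P_x$ becomes $\sum_{k=1}^3\hat\gamma_k\,T_k+\frac{\lambda_x\alpha_1}{\mu}(1-w_1-w_4)\,\xi_{i,j}^1$, where each $T_k$ collects the terms attached to a fixed node $\hat y_j^k$ and has exactly the structure of the one-dimensional balance in Proposition~\ref{prop_ahpha_1D}. I would then replay the one-dimensional grouping at each node: pair $w_4 h^{-}_{i+\ha}$ with the outgoing part $-\tfrac{\lambda_x}{2}h^{*,-}_{i+\ha}(u^{-}_{i+\ha}+\alpha_1)$ of $-\lambda_x\hat{\bF}^{*,H}_{i+\ha}$, pair $w_1 h^{+}_{i-\ha}$ with $+\tfrac{\lambda_x}{2}h^{*,+}_{i-\ha}(u^{+}_{i-\ha}-\alpha_1)$, and keep the two incoming cross terms $\tfrac{\lambda_x}{2}h^{*,+}_{i+\ha}(\alpha_1-u^{+}_{i+\ha})$ and $\tfrac{\lambda_x}{2}h^{*,-}_{i-\ha}(u^{-}_{i-\ha}+\alpha_1)$ untouched. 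Using $0\le h^{*,\pm}\le h^{\pm}$ from \eqref{2D_hy_re1}, the nonnegativity of the edge values and of $\xi_{i,j}^1$ assumed in the statement, and the bound $|u^{\pm}|\le\alpha_1$, every grouped term is nonnegative as soon as $\tfrac{\alpha_1}{\mu}w_1\ge\alpha_1$, i.e. $\mu\le w_1$, which is precisely \eqref{CFL_con2d}. Hence $T_k\ge0$ and $P_x\ge0$; the identical argument in $y$, with $\alpha_2$, $\xi_{i,j}^2$ and $|v^{\pm}|\le\alpha_2$, gives $P_y\ge0$, so $\overline h_{i,j}^{n+1}=P_x+P_y\ge0$.

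The main obstacle is organizational rather than analytical: the splitting weights $\lambda_x\alpha_1/\mu$ and $\lambda_y\alpha_2/\mu$ must be chosen so that the two per-node one-dimensional balances return the \emph{same} threshold $\mu\le w_1$, since any other split would produce a more restrictive or simply incorrect CFL number. The one genuinely delicate point, inherited from the one-dimensional case, is the estimate $|u^{\pm}|\le\alpha_1$ (resp. $|v^{\pm}|\le\alpha_2$) at the reconstructed edge point-values, whereas $\alpha_1,\alpha_2$ are defined through cell averages; this is the standard admissibility assumption of the scaling positivity-preserving framework, and once it is granted the whole proof is a direct two-directional replay of Proposition~\ref{prop_ahpha_1D}.
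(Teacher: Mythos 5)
Your proof is correct and is essentially the argument the paper intends: the paper omits the 2D proof entirely (stating only that it is ``quite similar to'' the 1D case), and your directional convex splitting with weights $\lambda_x\alpha_1/\mu$ and $\lambda_y\alpha_2/\mu$, followed by the $\xi^{1}_{i,j},\xi^{2}_{i,j}$-based decomposition of the cell average and a per-Gauss-node replay of the 1D balance, is precisely the standard way that reduction is carried out. The only cosmetic difference is that you group the flux terms at each node directly (using $0\le h^{*,\pm}\le h^{\pm}$ and $w_1=w_4$) instead of adding and subtracting an internal flux and invoking Lemma~\ref{lemma1} as the paper does in Appendix~B, and the admissibility caveat you flag ($|u^{\pm}|\le\alpha_1$, $|v^{\pm}|\le\alpha_2$ for reconstructed point values while $\alpha_1,\alpha_2$ are defined through cell averages) is inherited from the paper's own 1D proof rather than being a gap you introduced.
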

{ {
\begin{proof}
The proof of this result is quite similar to that given earlier for the 1D case, and is therefore omitted.
\end{proof}
}}

\vspace{10pt}

Moreover, we modify the water height $h$ on each cell $I_{i,j}$ with PP limiter as
\begin{align*}
&\tilde{h}_{i-\ha}^{+} \left(\hat{y}_j^k\right)=\theta_1 \left(h_{i-\ha}^{+} \left(\hat{y}_j^k\right)
-\overline{h}_{i,j}^n\right)+\overline{h}_{i,j}^n, \nonumber\\
&\tilde{h}_{i+\ha}^{-} \left(\hat{y}_j^k\right)=\theta_1 \left(h_{i+\ha}^{-} \left(\hat{y}_j^k\right)
-\overline{h}_{i,j}^n\right)+\overline{h}_{i,j}^n, \nonumber\\
&\tilde{h}_{j-\ha}^{+} \left(\hat{x}_i^k\right)=\theta_2 \left(h_{j-\ha}^{+} \left(\hat{x}_i^k\right)
-\overline{h}_{i,j}^n\right)+\overline{h}_{i,j}^n,\nonumber\\
&\tilde{h}_{j+\ha}^{-} \left(\hat{x}_i^k\right)=\theta_2 \left(h_{j+\ha}^{-} \left(\hat{x}_i^k\right)
-\overline{h}_{i,j}^n\right)+\overline{h}_{i,j}^n,
\end{align*}
with
\begin{align*}
\theta_1&=\min\left(1,\frac{\overline{h}_{i,j}^n-\eta}{\overline{h}_{i,j}^n-m_{i,j}^1 }\right),
\quad m_{i,j}^1=\min_{k}\min\left(h_{i-\ha}^{+}\left(\hat{y}_j^k\right),h_{i+\ha}^{-}\left(\hat{y}_j^k\right),\xi_{i,j}^1\right),
\\\theta_2&=\min\left(1,\frac{\overline{h}_{ij}^n-\eta}{\overline{h}_{i,j}^n-m_{i,j}^2 }\right),\quad m_{i,j}^2=\min_{k}\min\left(h_{j-\ha}^{+}\left(\hat{x}_i^k\right),h_{j+\ha}^{-}\left(\hat{x}_i^k\right),\xi_{i,j}^2\right),
\\ \eta&=\min\limits_{i,j}\left\{10^{-13},\overline{h}_{i,j}\right\}.
\end{align*}

Note that this PP limiter does not maintain the well-balanced property in general,
and we apply
\bean
&\tilde{b}_{i-\ha}^{+} \left(\hat{y}_j^k\right)=H_{i-\ha}^{+} \left(\hat{y}_j^k\right)-\tilde{h}_{i-\ha}^{+} \left(\hat{y}_j^k\right),
\quad \tilde{b}_{i+\ha}^{-} \left(\hat{y}_j^k\right)=H_{i+\ha}^{-} \left(\hat{y}_j^k\right)-
\tilde{h}_{i+\ha}^{-} \left(\hat{y}_j^k\right),
\\
&\tilde{b}_{j-\ha}^{+} \left(\hat{x}_i^k \right)=H_{j-\ha}^{+} \left(\hat{x}_i^k\right)-
\tilde{h}_{j-\ha}^{+} \left(\hat{x}_i^k\right),
\quad \tilde{b}_{j+\ha}^{-} \left(\hat{x}_i^k \right)=H_{j+\ha}^{-} \left(\hat{x}_i^k\right)-\tilde{h}_{j+\ha}^{-} \left(\hat{x}_i^k\right),
\eean
to get the new bottom approximation on the boundaries \cite{zhang2021high}.
{  Obviously, for the lake-at-rest steady state, we have
\bean
&\tilde{b}_{i-\ha}^{+} \left(\hat{y}_j^k\right)+\tilde{h}_{i-\ha}^{+} \left(\hat{y}_j^k\right)=H_{i-\ha}^{+} \left(\hat{y}_j^k\right)=C,
\quad \tilde{b}_{i+\ha}^{-} \left(\hat{y}_j^k\right)+
\tilde{h}_{i+\ha}^{-} \left(\hat{y}_j^k\right)=H_{i+\ha}^{-} \left(\hat{y}_j^k\right)=C,
\\
&\tilde{b}_{j-\ha}^{+} \left(\hat{x}_i^k \right)+
\tilde{h}_{j-\ha}^{+} \left(\hat{x}_i^k\right)=H_{j-\ha}^{+} \left(\hat{x}_i^k\right)=C,
\quad \tilde{b}_{j+\ha}^{-} \left(\hat{x}_i^k \right)+\tilde{h}_{j+\ha}^{-} \left(\hat{x}_i^k\right)=H_{j+\ha}^{-} \left(\hat{x}_i^k\right)=C.
\eean
Thus, the above positivity-preserving procedure can maintain the well-balanced property of the WENOAO-CST method.
}

\section{Numerical examples}
\label{num_sec}

In this section, numerous numerical examples for the 1D and 2D SWEs are presented to demonstrate the well-balanced and positivity-preserving properties of the fifth-order finite volume WENO-AO scheme based on the CST pre-balanced form. For comparisons, we mainly consider the following two methods:
\begin{itemize}
    \item {
    The proposed well-balanced WENOAO-CST method:
    uses the fifth-order finite volume WENO-AO construction \cite{BALSARA2016780} based on the CST pre-balanced form \eqref{1D_eqa-1new} in 1D and \eqref{2D_eqa-1} in 2D.}
    \item {
    The well-balanced WENOJS-XS method \cite{xing2006new,xing2011high}:
    uses the fifth-order classical finite volume WENO-JS construction \cite{jiang1996efficient} based on the original form \eqref{swe-1d} in 1D and \eqref{swe-2d} in 2D.}
\end{itemize}

The third-order SSP Runge-Kutta scheme \cite{shu1988efficient} is used for discretization in time.
The CFL number is taken to be $0.6$, unless otherwise stated.
{  Especially, the CFL number is set to be $0.6, 0.4, 0.3, 0.2, 0.1$ during the mesh refinement to ensure the spatial errors dominate in the accuracy tests such as Example \ref{accuracy test} and Example \ref{accuracy test-2D}.}
The gravitational constant is taken as $g=9.812$ in our computation.
We take the numerical solution obtained by the fifth-order finite volume WENOJS-XS method, with $N_x = 3000$ in the 1D case and $N_x = N_y = 1000$ in the 2D case, as the reference solution for comparisons, unless otherwise indicated.

To show the performance of the WENO-AO reconstruction, we also replace the WENO-AO reconstruction by WENO-ZQ \cite{zhu2016new} and WENO-MR \cite{zhu2018new} in the WENOAO-CST method (denoted as WENOZQ-CST and WENOMR-CST).
{ By comparing the results of the four methods}, we have found that the well-balanced WENOAO-CST method outperforms the other methods in terms of efficiency and resolution.
To be specific, the WENOAO-CST method is more efficient than the WENOJS-XS method in the sense that the former leads to a smaller error than the latter for a fixed amount of CPU time (cf. Example \ref{accuracy test} and Example \ref{accuracy test-2D}).
Although
the WENO-AO reconstruction needs to calculate a fourth-order polynomial for a big stencil which adds extra computational cost to the algorithm.
In addition, the WENOAO-CST method has better resolution than the WENOZQ-CST and WENOMR-CST methods, especially for small perturbation tests (cf. Fig.~\ref{figex3002} and  Fig.~\ref{figex3003} of Example \ref{small perturbation} in \S~\ref{num_sec}).

\vspace{8pt}

\begin{example}
{ (The exact C-property and positivity-preserving tests in 1D)}
\label{Well_balance}
\end{example}
This example is used to show the well-balanced and positivity-preserving properties of the proposed WENOAO-CST method.
The initial conditions are set to be the steady state solutions
\be
H(x, 0)=h(x, 0)+b(x)=10, \quad (h u)(x, 0)=0,\nonumber
\ee
on the computational domain $[0,10]$.
We use three different bottom topographies as follows
\begin{align}
&b(x)=5 e^{-\frac{2}{5}(x-5)^{2}},  \label{smooth}\\
&b(x)=
\left\{\begin{array}{ll}
4, &  4 \leqslant x \leqslant 8, \label{nonsmooth}\\
0, & \text {else},
\end{array}\right. \\
&b(x)=10e^{-\frac{2}{5}(x-5)^{2}}. \label{smooth1}
\end{align}
The first bottom is smooth, the second is discontinuous, and the last is a smooth bottom with a dry region.
The final time is $T=0.5$. The numerical errors ($N_x=200$)
of the water height $h$ and discharge $hu$
are shown in Table~\ref{tab:ex1:001} with three different bottom topographies (\ref{smooth})-\eqref{smooth1}. We can observe the WENOAO-CST method can achieve the round-off errors. Therefore the WENOAO-CST method is well-balanced.

It is worth pointing out that the WENOAO-CST method can maintain the positivity of $h$ for the bottom \eqref{smooth1} without PP limiter, while the WENOJS-XS method \cite{xing2006new} fails without PP limiter.
This result shows that the WENOAO-CST method is more robust than the WENOJS-XS method for this example.

\begin{table}[H]
\caption{$L^1$ and $L^\infty$ errors of the WENOAO-CST method for the 1D exact C-property tests. $N_x=200$ and $T=0.5$. }
\vspace{8pt}
\centering
\label{tab:ex1:001}
\begin{tabular}{ c  c    c c  c}
\hline
\multirow{2}{*}{bottom}&\multicolumn{2}{c}{$L^1$ error}&\multicolumn{2}{c}{$L^\infty$ error}\\
&\multicolumn{2}{c}{$h$~~~~~~~~~~$hu$}&\multicolumn{2}{c}{$h$~~~~~~~~~~$hu$}\\\hline
(\ref{smooth})& \multicolumn{2}{c}{4.07e-14 ~~~~~1.04e-13} &  \multicolumn{2}{c}{6.57e-14 ~~~~~3.85e-13} \\
(\ref{nonsmooth})& \multicolumn{2}{c}{3.97e-14  ~~~~~7.92e-14} &  \multicolumn{2}{c}{6.22e-14 ~~~~~2.54e-13} \\
(\ref{smooth1})& \multicolumn{2}{c}{4.47e-14  ~~~~~8.25e-14} &  \multicolumn{2}{c}{6.75e-14 ~~~~~2.33e-13} \\
 \hline
\end{tabular}
\end{table}

\begin{example}
(The accuracy test in 1D)\label{accuracy test}
\end{example}
The bottom topography and initial conditions are
$$
b(x)=\sin ^{2}(\pi x), \quad h(x, 0)=5+e^{\cos (2 \pi x)}, \quad(h u)(x, 0)=\sin (\cos (2 \pi x)),
$$
on the computational domain [0,1] with a periodic boundary condition.
The final time is $T = 0.1$ when the solutions remain smooth.
Since the exact solutions are not available, we adopt the WENOJS-XS method with $N_x=12800$ to compute reference solutions and treat them as the exact solutions to compute the numerical errors.
The $L^1$  and $L^\infty$ errors and orders for $h$ and $hu$ by the WENOAO-CST method are plotted in Table~\ref{tab:ex2:001}.
One can see that our method achieves the designed fifth-order accuracy.

In Fig.~\ref{figex2-1001}, we also exhibit the numerical errors versus the CPU time for $h$ and $hu$ by the WENOAO-CST and WENOJS-XS methods.
We can observe that the WENOAO-CST method outperforms the WENOJS-XS method since the former can obtain a smaller error for a fixed CPU time.

\begin{table}[H]
\caption{$L^1$ and $L^\infty$ errors and orders of the WENOAO-CST method for the 1D accuracy test.  $T= 0.1$.}
\vspace{8pt}
\centering
\label{tab:ex2:001}
\begin{tabular}{ c c c  c c c c c c c }
\toprule
\multirow{2}{*}{$N_x$}&\multirow{2}{*}{CFL}&$h$&&$hu$&&$h$&&$hu$\\

& &$L^1$ error& order& $L^1$ error& order&$L^\infty$ error& order& $L^\infty$ error& order\\
 \midrule
50	&0.6& 1.70e-03	&	    &1.80e-02	&    & 1.71e-02 &       &1.68e-01	&\\
100	&0.4& 2.43e-04	&2.80	&2.06e-03	&3.12& 3.67e-03	&2.22	&3.21e-02	&2.39\\
200	&0.3& 1.54e-05	&3.98	&1.31e-04	&3.97& 3.82e-04	&3.26	&3.27e-03	&3.30\\
400	&0.2& 5.95e-07	&4.70	&5.08e-06	&4.69& 1.99e-05	&4.26	&1.69e-04	&4.28\\
800	&0.1& 1.84e-08	&5.02	&1.57e-07	&5.01& 6.53e-07	&4.93	&5.53e-06	&4.93\\
\bottomrule
\end{tabular}
\end{table}

\begin{figure}[H]
\centering
\subfigure[$h$]{
\includegraphics[width=0.45\textwidth,trim=40 0 40 10,clip]{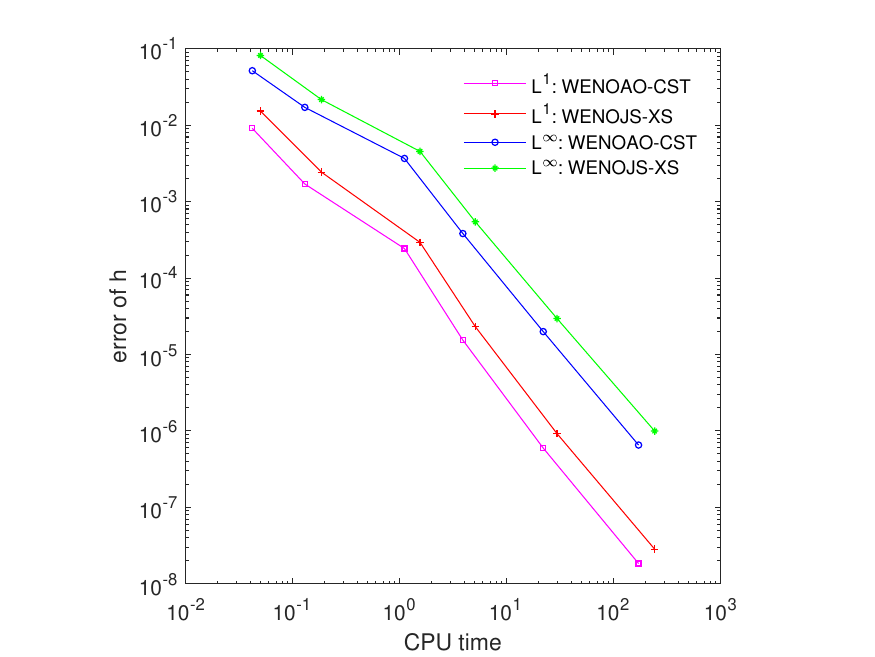}}
\subfigure[$hu$]{
\includegraphics[width=0.45\textwidth,trim=40 0 40 10,clip]{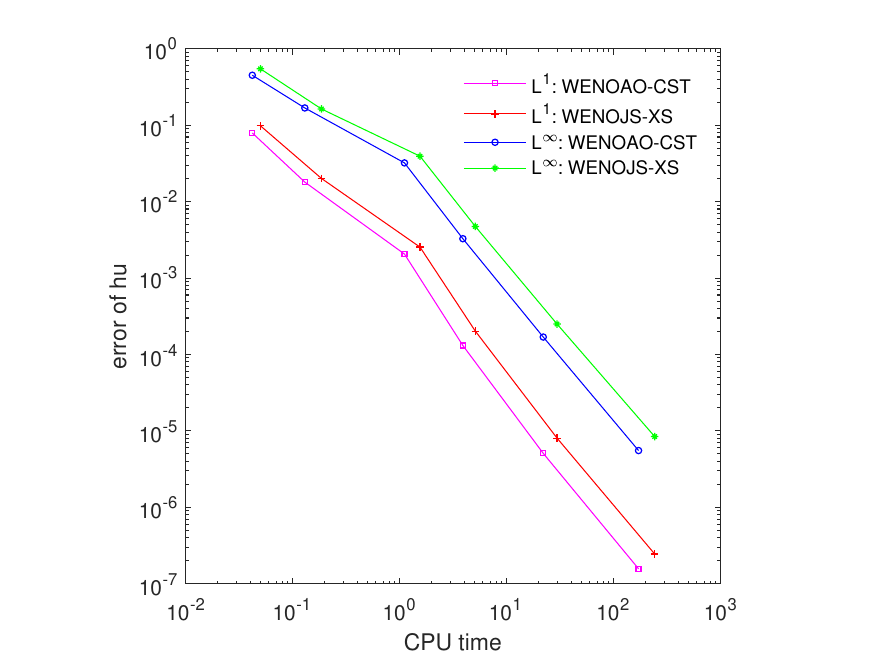}}
\caption{The $L^1$ and $L^\infty$ errors vs. CPU time of the water height $h$ and discharge $hu$ by the WENOAO-CST and WENOJS-XS methods.  $T= 0.1$.}
\label{figex2-1001}
\end{figure}

\begin{example}
(The tidal wave flow in 1D)\label{Tidal}
\end{example}
This example is taken from \cite{bermudez1994upwind}.
The initial conditions and the bottom topography are
\begin{align*}
&H(x, 0) = h(x,0) + b(x) \equiv 60.5, \quad(h u)(x, 0) \equiv 0,
\\&
b(x)=10+\frac{40 x}{L}+10 \sin \left(\frac{4 \pi x}{L}-\frac{\pi}{2}\right),
\end{align*}
on the computational domain $0 \leqslant x \leqslant L$ with  $L=14,000$.
The boundary conditions are taken as
\begin{equation*}
H(0, t)=64.5-4 \sin \left(\frac{4 \pi t}{86400}+\frac{\pi}{2}\right), \quad(h u)(L, t)=0.
\end{equation*}

This is a useful test since the exact solutions have a highly accurate asymptotic approximation
\begin{align*}\label{tidal}
H(x, t)=64.5-4 \sin \left(\frac{4 \pi t}{86400}+\frac{\pi}{2}\right), \,\, (h u)(x, t)=\frac{\pi(x-L)}{5400} \cos \left(\frac{4 \pi t}{86400}+\frac{\pi}{2}\right).
\end{align*}
The solutions are computed up to $T=7,552.13$ with $N_x = 200$.
In Fig.~\ref{figex3-1001}, the numerical solutions of $h$ and $u$ by the WENOAO-CST and WENOJS-XS methods are plotted with the exact solutions.
The numerical solutions of the two methods coincide with the exact solutions quite well.

\begin{figure}[H]
\centering
\subfigure[ $h$]{
\includegraphics[width=0.45\textwidth,clip]{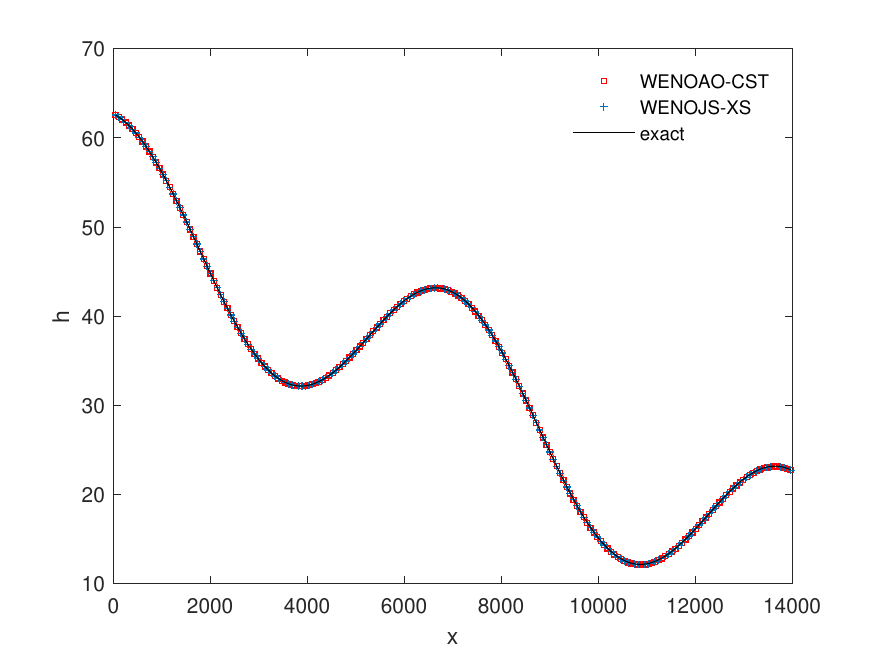}}
\subfigure[$ u$]{
\includegraphics[width=0.45\textwidth,clip]{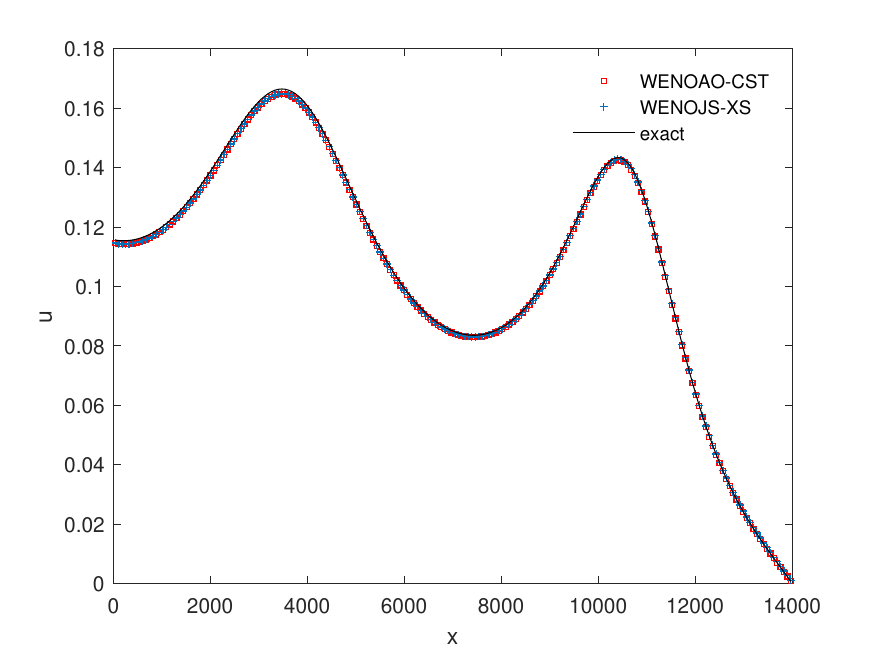}}
\caption{The water height $h$ and velocity $u$ for the tidal wave flow test. $T=7,552.13$ and $N_x=200$. Square: WENOAO-CST; plus: WENOJS-XS; solid line: exact solutions.}
\label{figex3-1001}
\end{figure}

\begin{example}
(The small perturbation of a steady-state water in 1D)
\label{small perturbation}
\end{example}
This example is used to demonstrate the capability of the WENOAO-CST  method for computing small perturbations of a steady-state flow over a non-flat bottom topography. The bottom topography with a hump is given by
\be
b(x)=\begin{cases}
0.25(\cos (10 \pi(x-1.5))+1), & 1.4 \leqslant x \leqslant 1.6,\\
0, & \text{else}.
\end{cases}
\ee
 The initial conditions on the computational domain $[0,2]$ are provided by
\be
(h u)(x, 0)=0, \,\, h(x, 0)=\begin{cases}
1-b(x)+\varepsilon, & 1.1 \leqslant x \leqslant 1.2, \\
1-b(x), & \text{else},
\end{cases}
\ee
where the perturbation constant $\varepsilon$ is non-zero, and the big pulse $\varepsilon = 0.2$ and small pulse $\varepsilon = 0.001$ are used in our computation. The transmissive boundary condition is applied on both ends.
Theoretically, the small disturbance should break into two waves moving left and right with characteristic speeds of approximately $\pm\sqrt{ gh}$, respectively.
{ {The simulation involving such a small perturbation of the sea surface is challenging for many non-well-balanced numerical methods.}}

The solutions are computed up to $T = 0.2$ with $N_x= 200$.
The numerical solutions by the WENOAO-CST, WENOZQ-CST, WENOMR-CST, and WENOJS-XS methods are plotted in Fig.~\ref{figex3002} (for big pulse $\varepsilon= 0.2$) and Fig.~\ref{figex3003} (for small pulse $\varepsilon = 0.001$).
We can observe that our WENOAO-CST method has slightly better resolution than the WENOJS-XS method around the discontinuities.
It is obvious that both WENO-ZQ and WENO-MR reconstructions generate spurious oscillations near shock, so we suggest using the WENO-AO reconstruction.

\begin{figure}[H]
\centering
\subfigure[$h + b$]{
\includegraphics[width=0.45\textwidth,clip]{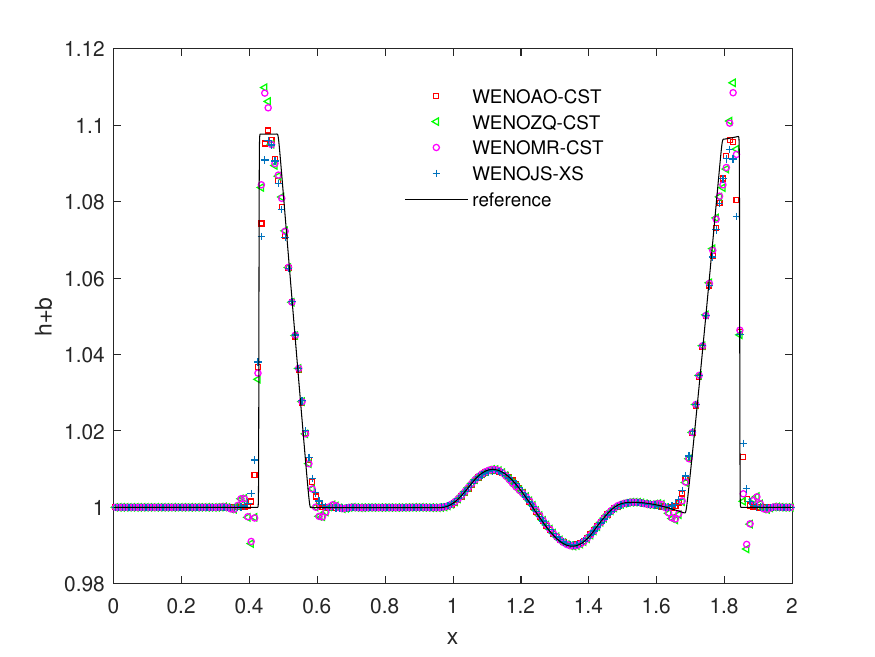}}
\subfigure[zoom-in of (a) at $x \in (0.3,0,7)$]{
\includegraphics[width=0.45\textwidth,clip]{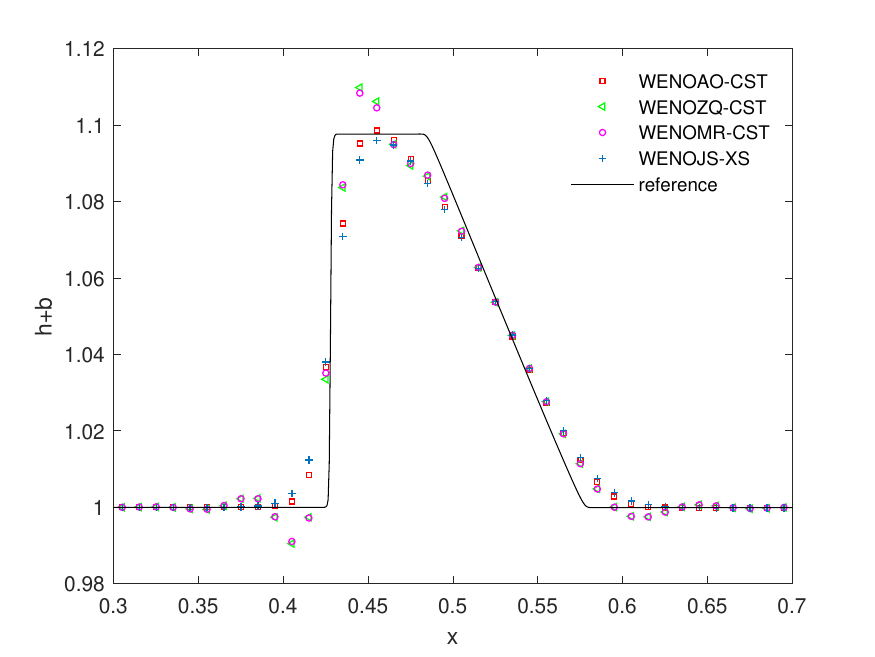}}
\subfigure[$hu$]{
\includegraphics[width=0.45\textwidth,clip]{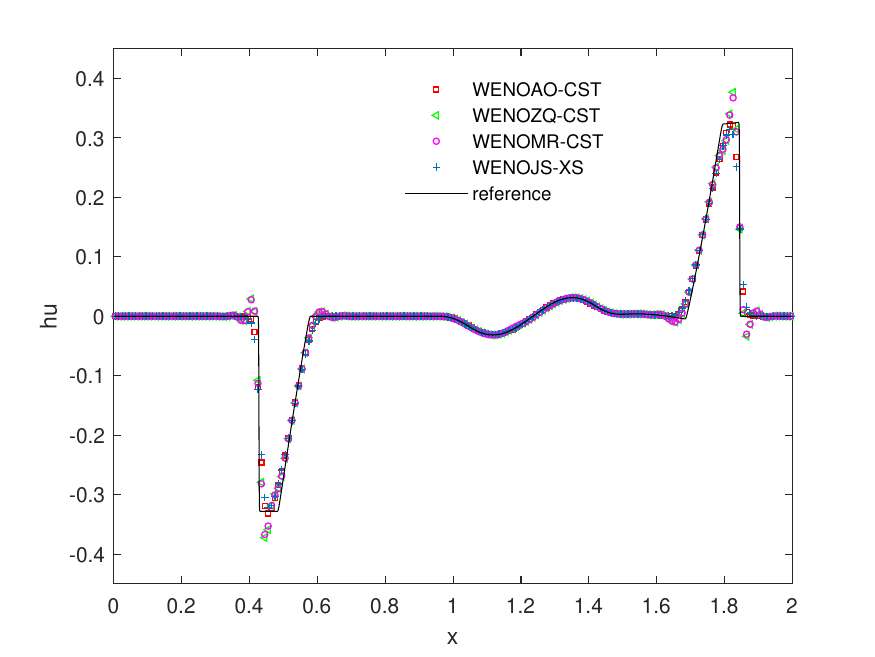}}
\subfigure[zoom-in of (c) at $x \in (0.3,0,7)$]{
\includegraphics[width=0.45\textwidth,clip]{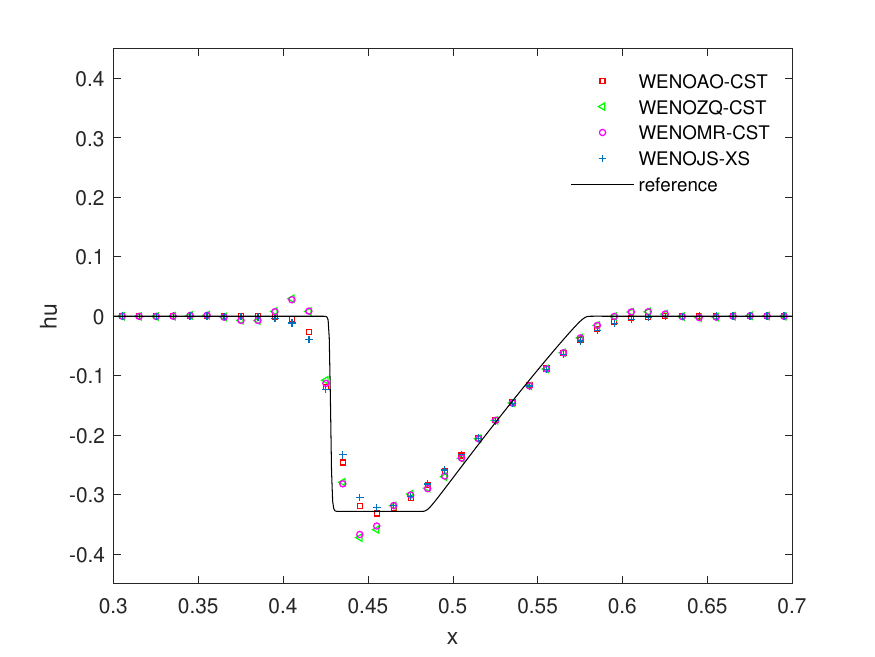}}
\caption{The surface level $h + b$ and discharge $hu$ for small perturbation of a steady-state water with a big pulse $\varepsilon=0.2$. $T=0.2$ and $N_x=200$. Square: WENOAO-CST; triangle: WENOZQ-CST; circle: WENOMR-CST; plus: WENOJS-XS;  solid line: reference solutions.}
\label{figex3002}
\end{figure}

\begin{figure}[H]
\centering
\subfigure[$h + b$]{
\includegraphics[width=0.45\textwidth,clip]{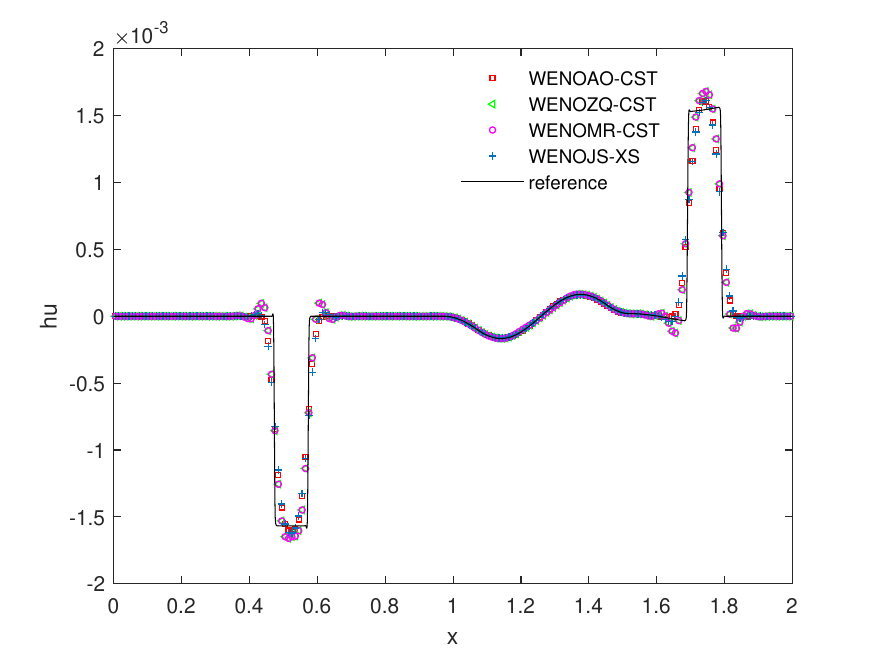}}
\subfigure[zoom-in of (a) at $x \in (0.3,0,7)$]{
\includegraphics[width=0.45\textwidth,clip]{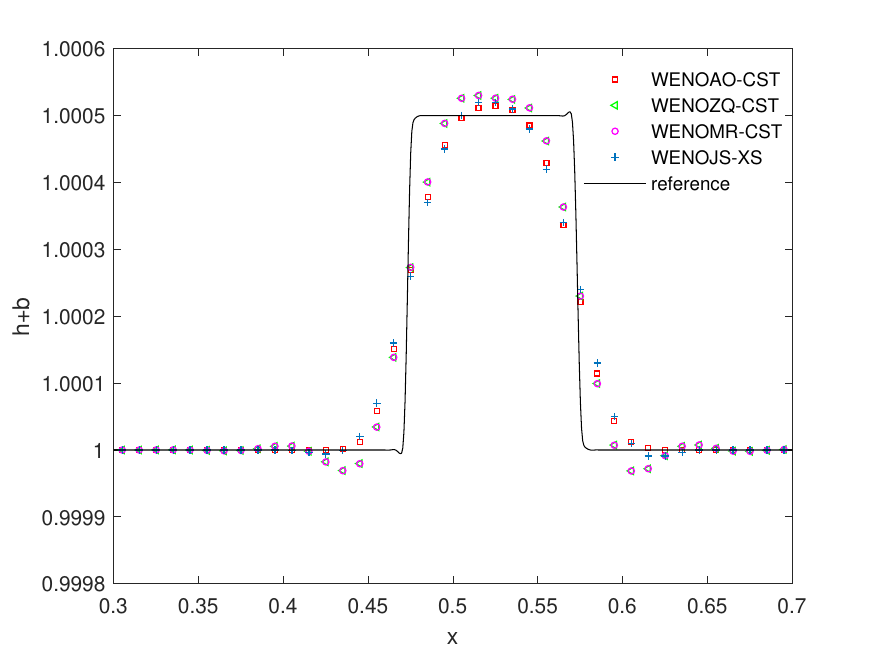}}
\subfigure[$hu$]{
\includegraphics[width=0.45\textwidth,clip]{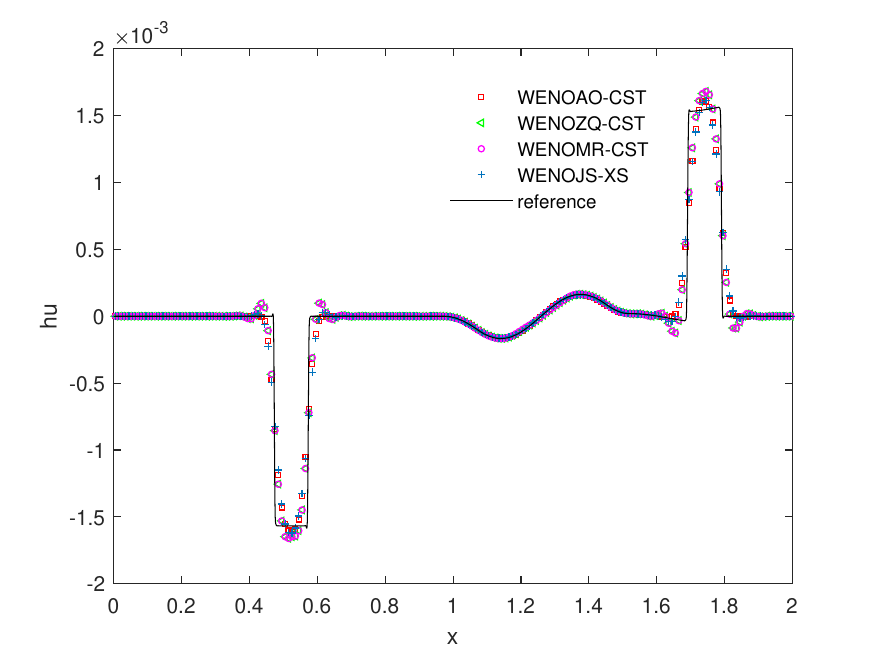}}
\subfigure[zoom-in of (c) at $x \in (0.3,0,7)$]{
\includegraphics[width=0.45\textwidth,clip]{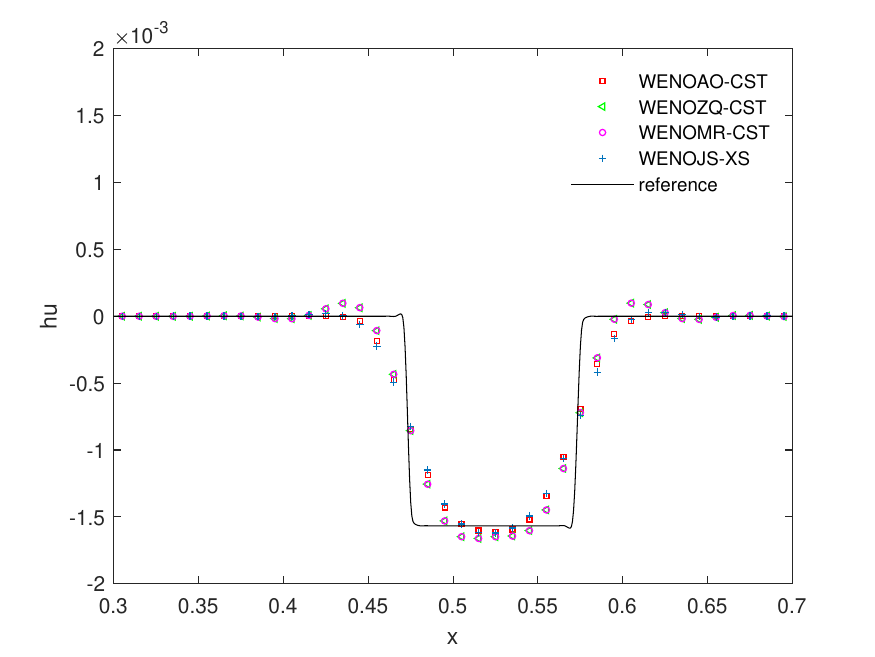}}
\caption{The surface level $h + b$ and discharge $hu$ for small perturbation of a steady-state water with a small pulse $\varepsilon=0.001$. $T=0.2$ and $N_x=200$. Square: WENOAO-CST; triangle: WENOZQ-CST; circle: WENOMR-CST; plus: WENOJS-XS; solid line: reference solutions.}
\label{figex3003}
\end{figure}

\begin{example}
(The dam breaking problem over a rectangular bump in 1D)
\label{dam breaking}
\end{example}
We solve the dam breaking problem over a rectangular bump, which produces a rapidly varying flow over a non-smooth bottom topography.  The computational domain is $[0, 1500]$, and
the bottom topography is a single rectangular hump
\be
b(x)=\left\{\begin{array}{ll}
8, & |x-750| \leqslant 1500 / 8, \\
0, & \text {else}.
\end{array}\right.\nonumber
\ee
 The initial conditions are
\be
(h u)(x, 0)=0,\,\quad  h(x, 0)=\left\{\begin{array}{ll}
20-b(x), & x \leqslant 750, \\
15-b(x), & \text {else}.
\end{array}\right.\nonumber
\ee

The solutions are calculated up to $T=15$ and $T=60$ with $N_x = 400$. At the initial time, the water height $h$ is discontinuous and the water surface level $h + b$ is smooth at the points $x = 562.5$ and $x = 937.5$.
The bottom topography $b$ and surface level $h+b$ at $T=0,~15,~60$ are shown in Figs.~\ref{figex4001} and \ref{figex4002}.
From the figures, we can see that both methods can solve this problem well and agree with the reference solutions. The results of the two methods are comparable.

\begin{figure}[H]
\centering
\subfigure[$ T=0,~15$]{
\includegraphics[width=0.45\textwidth,clip]{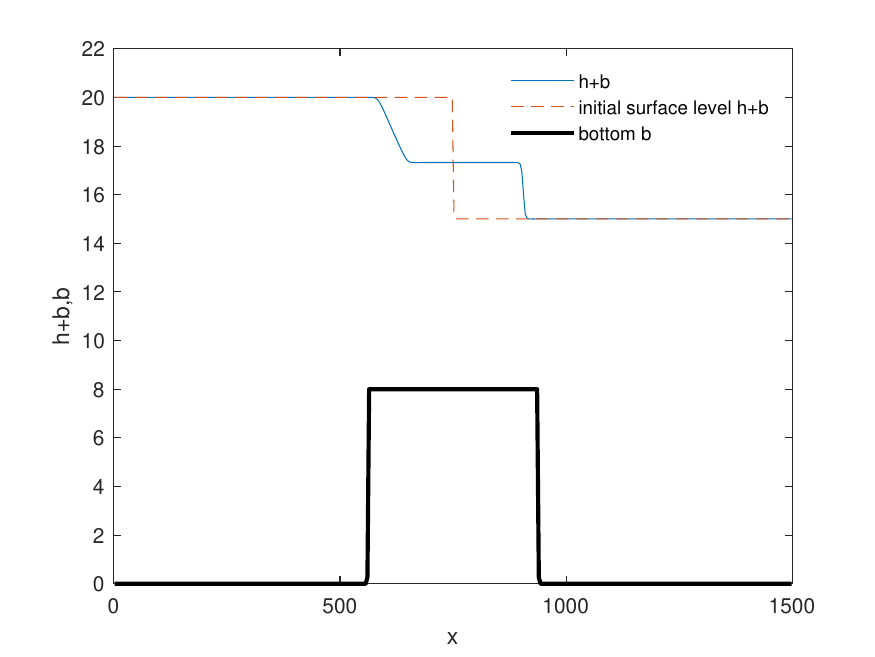}}
\subfigure[$T=0,~60$]{
\includegraphics[width=0.45\textwidth,clip]{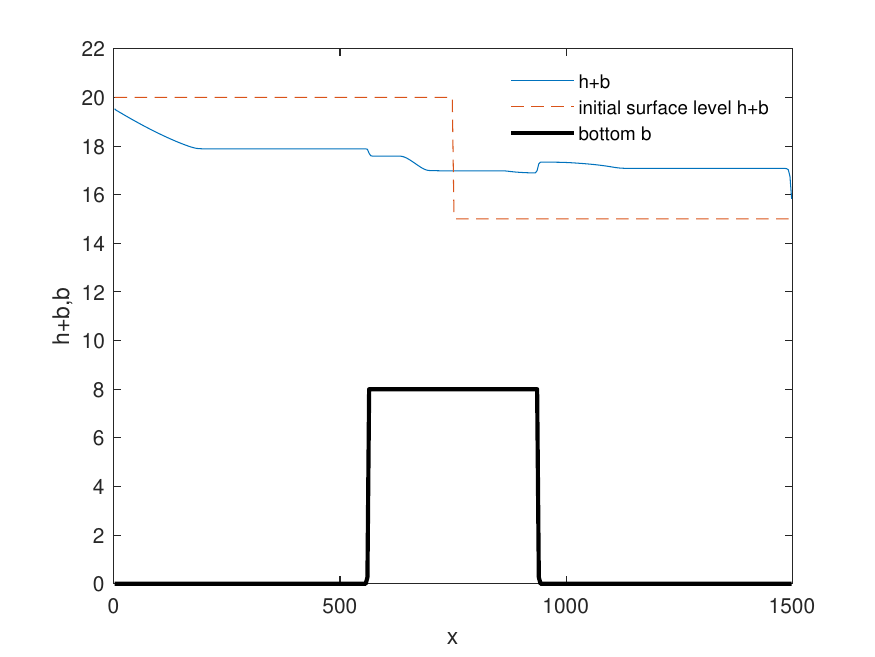}}
\caption{The bottom topography and the surface level $h + b$ at $T=0,~15,~60$ by the WENOAO-CST for the dam breaking problem. $N_x=400$. Left: $T=15$; right: $T=60$.}
\label{figex4001}
\end{figure}

\begin{figure}[H]
\centering
\subfigure[$ h+b$ at $T=15$]{
\includegraphics[width=0.45\textwidth,clip]{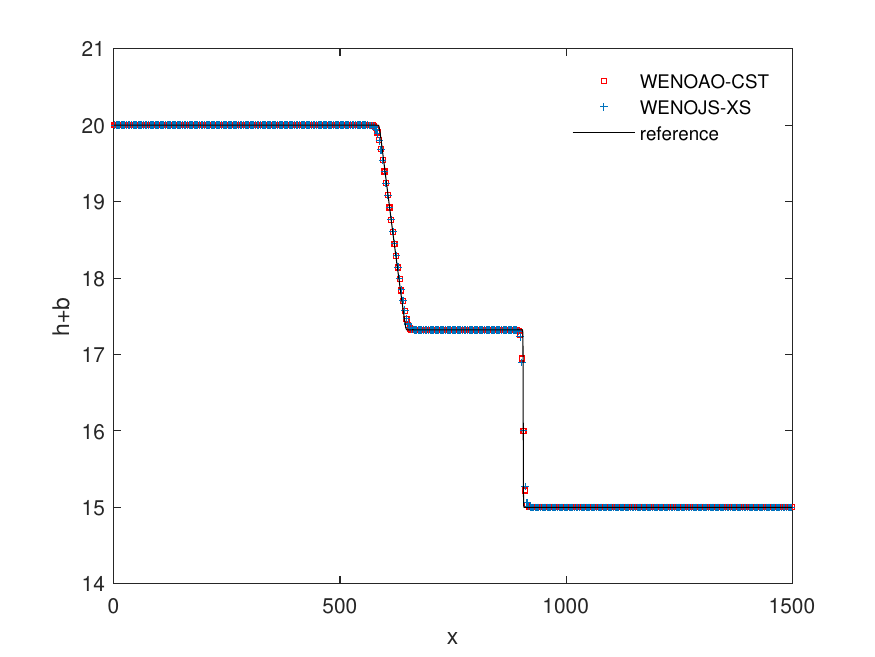}}
\subfigure[$ h+b$ at $T=60$]{
\includegraphics[width=0.45\textwidth,clip]{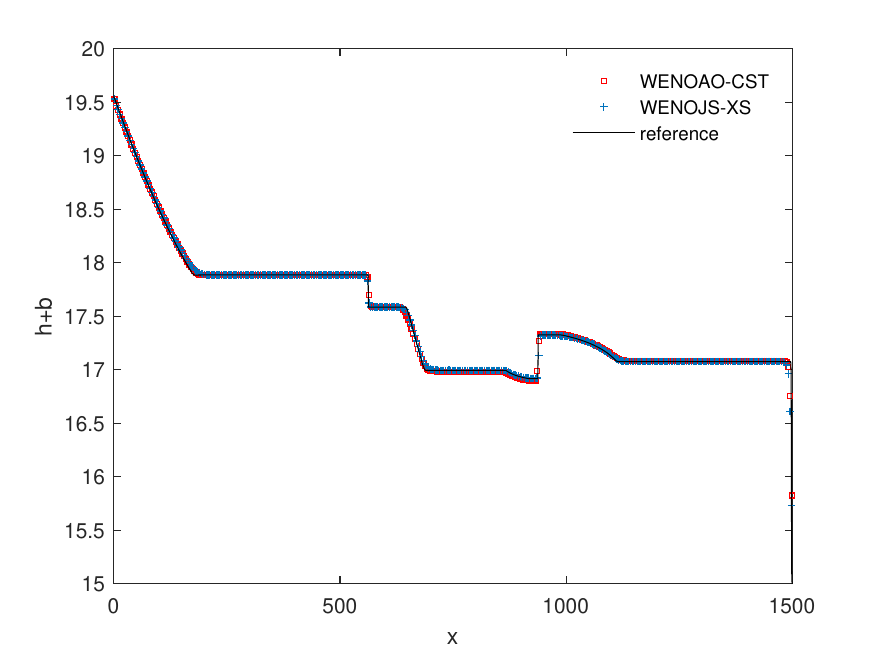}}
\caption{The surface level $h + b$ at $T=15$ and $60$ for the dam breaking problem. $N_x=400$. Square: WENOAO-CST; plus: WENOJS-XS; solid line: reference solutions.}
\label{figex4002}
\end{figure}

\begin{example}
(The steady flows over a hump in 1D)\label{Steady_flows}
\end{example}
This example is used to study the convergence in time toward steady flow over a hump.
Such characteristics of flows are determined by the free-stream Froude number, $ Fr=|u| /\sqrt{g h}$, and the bottom topography. The steady state solution will be smooth if $F r<1$ (subcritical flow) or $F r>1$ (supercritical flow) everywhere.
Otherwise, one of the eigenvalues of the Jacobian matrix, $ u \pm \sqrt{g h}$, goes through zero with transitions occurring at points where $ F r$ passes through $1$. In the transcritical flow, a steady shock may present in the steady state solution. To test the numerical methods for the SWEs, steady flows across a hump are common benchmarks for transcritical and subcritical steady flows.
The hump bottom topography is provided by
\be
b(x)=\left\{\begin{array}{ll}
0.2-0.05(x-10)^{2}, & 8 \leqslant x \leqslant 12, \nonumber\\
0, & 0 \leqslant x < 8\text{~and~} 12<x\leqslant 25.
\end{array}\right.
\ee
The initial conditions are
\be
H(x, 0) \equiv 0.5, \quad(h u)(x, 0) \equiv 0.\nonumber
\ee
The boundary conditions determine the type of flow, and the flow can be subcritical or transcritical with or without a steady shock.
The time at which all of the solutions reach their steady states is specified as $T=200$. The upstream and downstream boundary conditions of the three cases are given as follows.
\begin{itemize}
\item []{\bf Case} 1: subcritical flow
\begin{itemize}
\item upstream: $(h u)(0, t)=4.42$.
\item downstream: $H(25, t)=2$.
\end{itemize}
\item[]{\bf Case} 2: transcritical flow without steady shock
\begin{itemize}
\item upstream: $(h u)(0, t)=1.53$.
\item downstream: $H(25, t)=0.41$.
\end{itemize}
\item[]{\bf Case} 3: transcritical flow with a steady shock
\begin{itemize}
\item upstream: $(h u)(0, t)=0.18$.
\item downstream: $H(25, t)=0.33$.
\end{itemize}
\end{itemize}

The surface level $h + b$ and the discharge $hu$ by the WENOAO-CST and WENOJS-XS methods for {\bf Cases} 1, 2, and 3 are plotted in Fig \ref{figex5003}.
We observe that the numerical solutions are non-oscillatory and in good agreement with the reference solutions.

\begin{figure}[H]
\centering
\subfigure[{\bf Case} 1: $h+b,b$]{
\includegraphics[width=0.45\textwidth,clip]{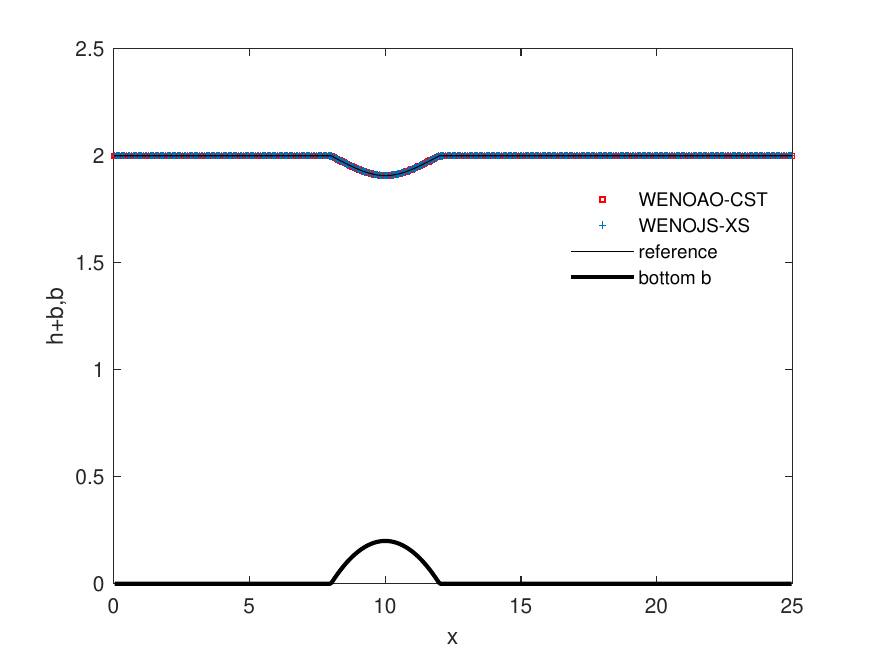}}
\subfigure[{\bf Case} 1: $hu$]{
\includegraphics[width=0.45\textwidth,clip]{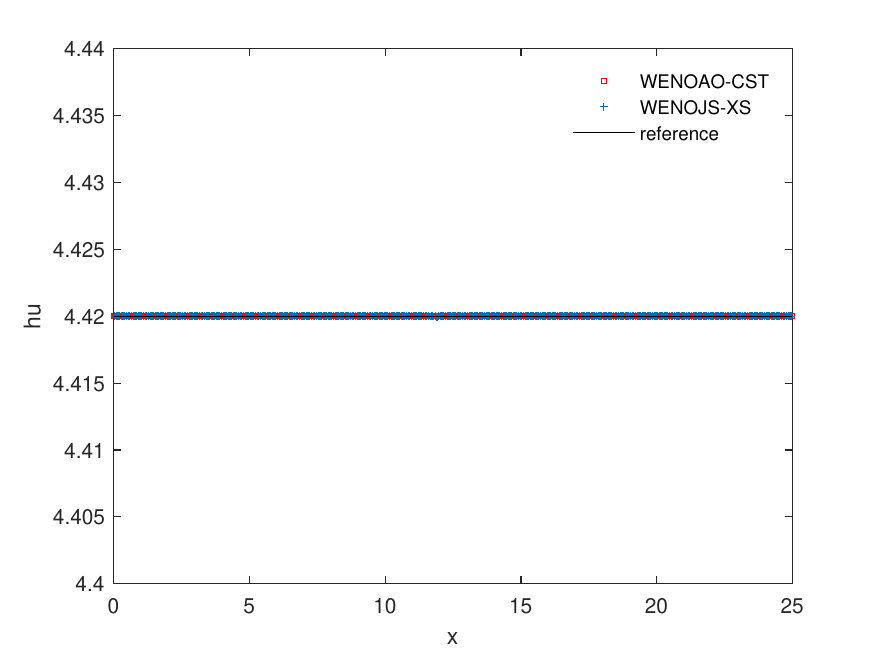}}
\subfigure[{\bf Case} 2: $h+b,~b$]{
\includegraphics[width=0.45\textwidth,clip]{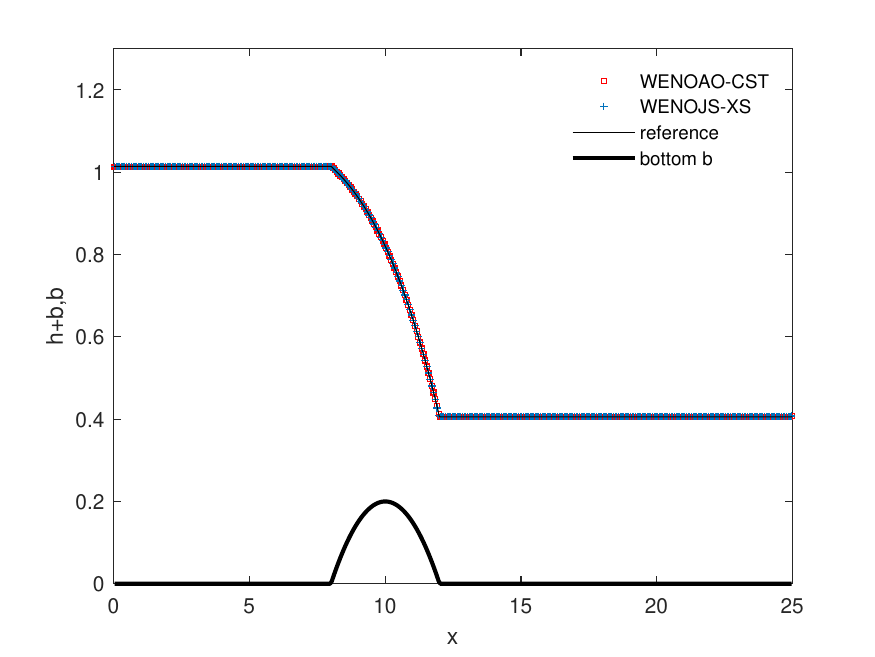}}
\subfigure[{\bf Case} 2: $hu$]{
\includegraphics[width=0.45\textwidth,clip]{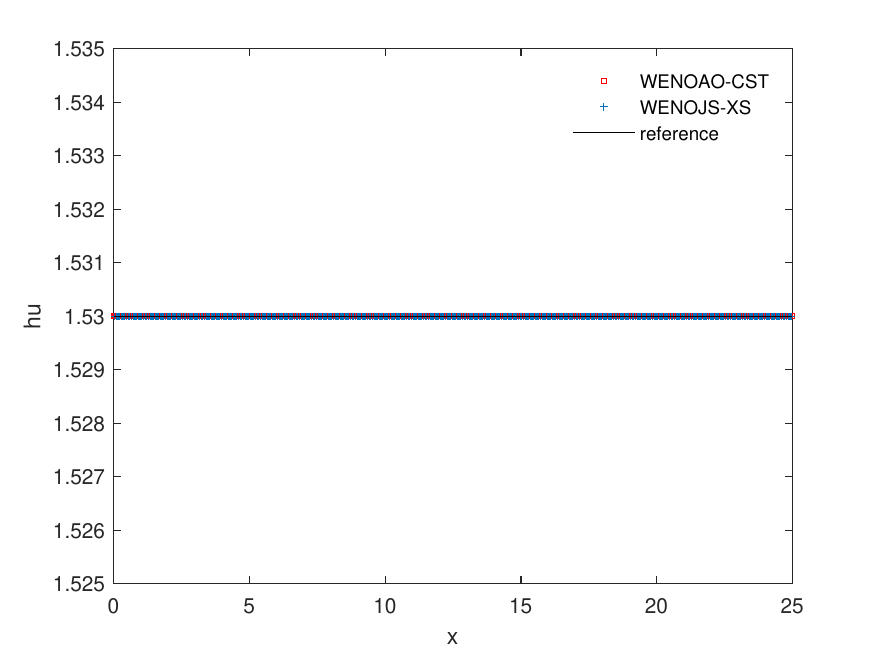}}
\subfigure[{\bf Case} 3: $h+b, ~b$]{
\includegraphics[width=0.485\textwidth,clip]{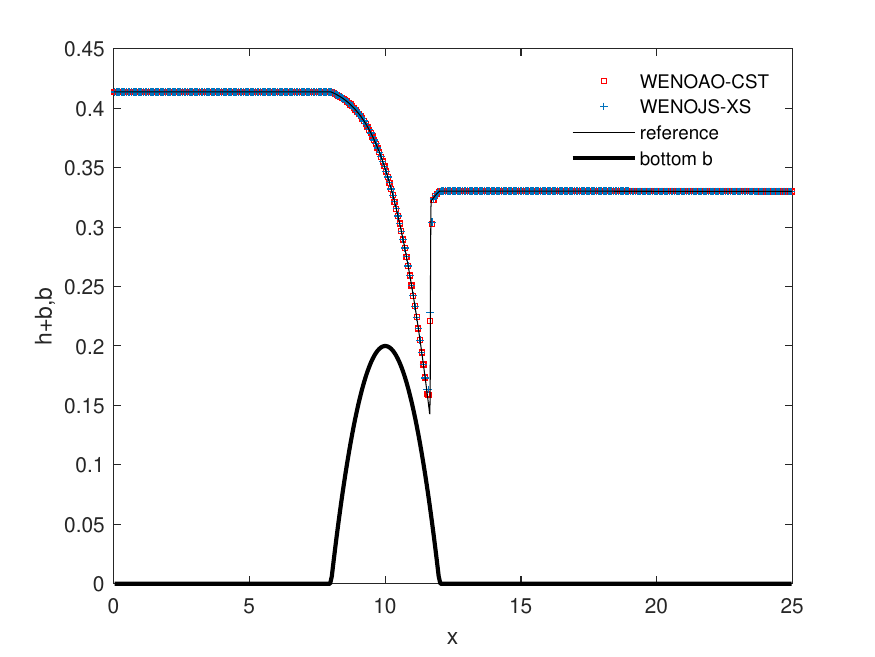}}
\subfigure[{\bf Case} 3: $hu$]{
\includegraphics[width=0.45\textwidth,clip]{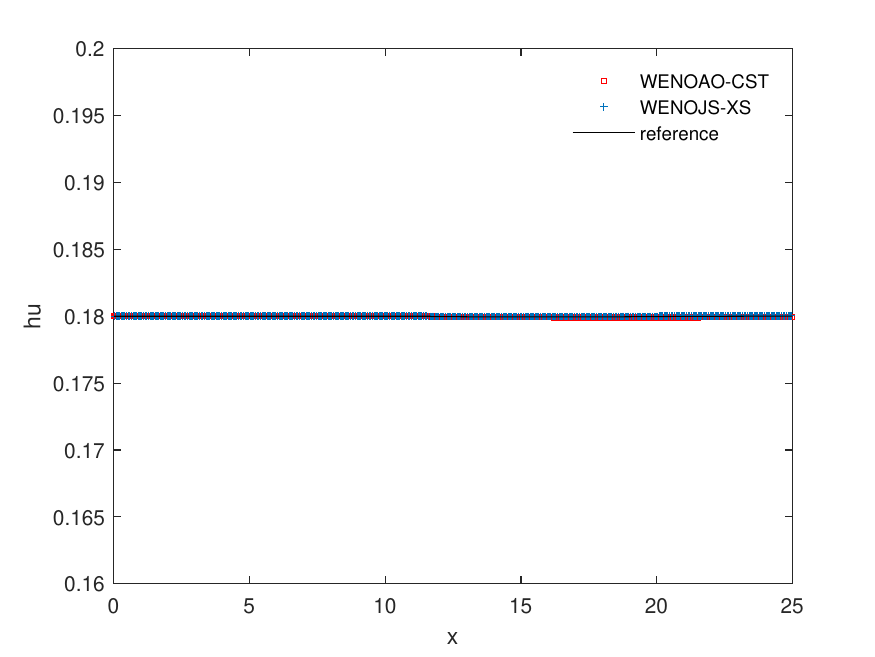}}
\caption{The surface level $h + b$ and discharge $hu$ for {\bf Cases} 1,~2 and 3. $T=200$ and $N_x=400$. Square: WENOAO-CST; plus: WENOJS-XS; solid line: reference solutions.}
\label{figex5003}
\end{figure}

\begin{example}
{ (The positivity-preserving tests over the flat and rectangle bottoms in 1D)}
\label{Riemann_problem}
\end{example}
{ We solve the 1D SWEs with a dry area over a flat bottom and a step bottom, respectively, to demonstrate the positivity-preserving property of our WENOAO-CST method. Specifically, we consider the bottom $b(x) \equiv 0$ in the first and second tests and a rectangle bottom in the third test.
These tests have been studied in \cite{bokhove2005flooding,bunya2009wetting,GALLOUET2003479,xing2011high,xing2010positivity}.
}

In the first test, the initial conditions are
\be \label{remain1}
hu(x, 0)=0, \quad h(x, 0)=\begin{cases}
10, & x \in [-300,~0], \\
0, & x \in (0,~300],
\end{cases}
\ee
with a transmissive boundary condition.
The left side is still water with a surface level of $10$, and the right is a dry region. The analytic solution for this problem can be found in \cite{bokhove2005flooding}.
The solutions are computed up to $T=4,~8,~12$ with $N_x=250$. The numerical solutions by the WENOAO-CST and WENOJS-XS methods (with PP limiter) are shown in Fig.~\ref{figex7-1001}.
No negative water height is generated during the computation, and a good agreement between the numerical and exact solutions is observed.

\vspace{8pt}

In the second test, the initial conditions with a nonzero velocity are
\be \label{remain2}
hu(x, 0)=\begin{cases}
0, &  x \in [-200,~0], \\
400, & x \in (0,~400],
\end{cases}
\quad
h(x, 0)=\begin{cases}
5, &  x \in [-200,~0],\\
10, & x \in (0,~400],
\end{cases}
\ee
with a transmissive boundary condition.
In the beginning, the solution does not have any dry areas. However, a dry region appears when the constant initial conditions meet the drying requirement $\sqrt{g h_{l}}+\sqrt{g h_{r}}+u_{l}-u_{r}<0$, and this makes the problem computationally challenging. After that, two expansion waves propagate away from each other. The analytical solution for this problem can be found in \cite{bokhove2005flooding}.
The solutions are computed up to $T=2,~4,~6$ with $N_x=250$. In Fig.~\ref{figex7-1003}, we plot the numerical solutions by WENOAO-CST and WENOJS-XS methods with the positivity-preserving limiter.
From these figures, we observe that the exact solutions are well captured by both two methods, and the water height is non-negative based on the positivity-preserving limiter.
The zoomed-in version near the dry region at these times is also shown in Fig.~\ref{figex7-1003}. We can see that the results of the WENOAO-CST method have slightly better resolution than that of the WENOJS-XS method around the dry region.

\vspace{8pt}

{
In the last test of this example, the initial conditions are
\begin{equation}\label{remain3}
\begin{split}
&
H(x,0) = 10,\quad b(x)=\begin{cases}
1, &  x\in [\frac{25}{3},~\frac{25}{2}],  \\
0, & x \in [0,~\frac{25}{3}]\cup[\frac{25}{2},~25],
\end{cases}\\
&h u(x, 0)=\begin{cases}
-350, &  x \in [0,~\frac{50}{3}], \\
350, &  x \in (\frac{50}{3},~25],
\end{cases}
\end{split}
\end{equation}
with a transmissive boundary condition.
The numerical results at different times  $T=0$, $0.25$, and $0.65$ with $N_x=250$ are shown in Fig.~\ref{figex8-1001} for the water surface and the discharge, respectively.  The water flows out of the domain and a dry region is developed. The results reflect this pattern well and agree with
those obtained in \cite{GALLOUET2003479,xing2011high}.
}

\begin{figure}[H]
\centering
\subfigure[$h$]{
\includegraphics[width=0.45\textwidth,clip]{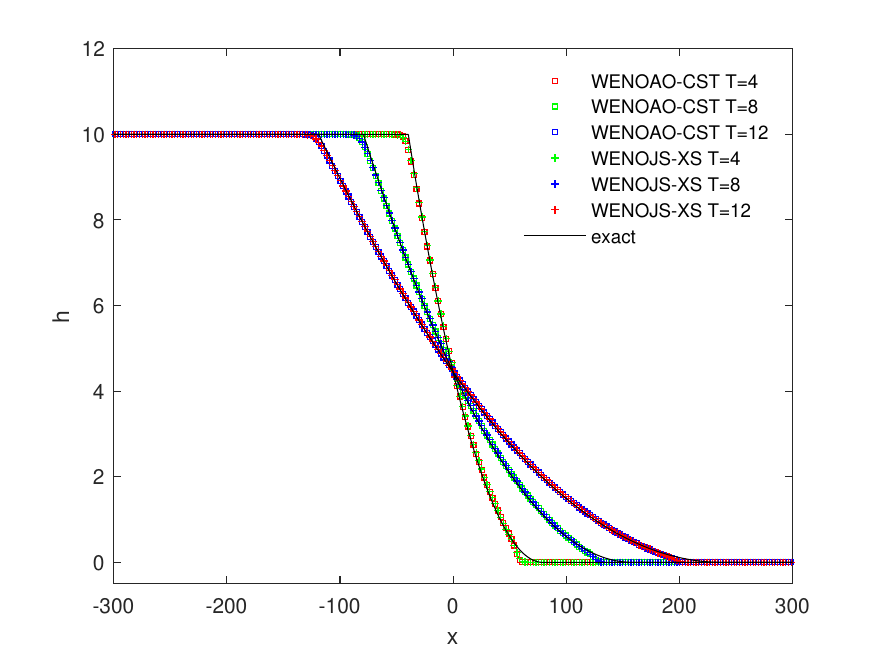}}
\subfigure[$ hu$]{
\includegraphics[width=0.45\textwidth,clip]{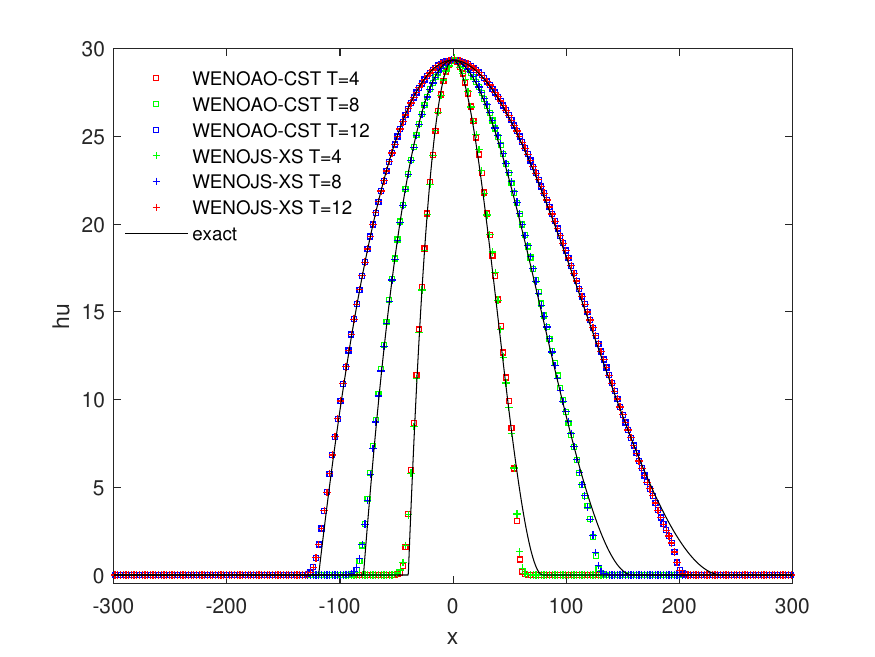}}
\caption{The water height $h$ and discharge $hu$ for the initial data \eqref{remain1}. $T=4,~8,~12$ and $N_x=250$. Square: WENOAO-CST; plus: WENOJS-XS; solid line: exact solutions. }
\label{figex7-1001}
\end{figure}

\begin{figure}[H]
\centering
\subfigure[$h$]{
\includegraphics[width=0.45\textwidth,clip]{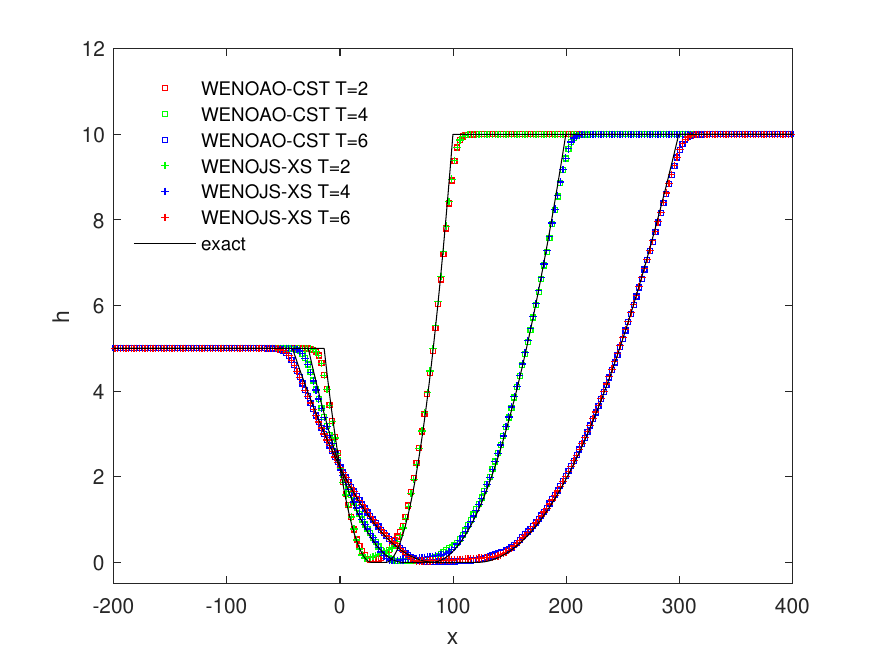}}
\subfigure[zoom-in of (a) around dry region]
{\includegraphics[width=0.45\textwidth,clip]{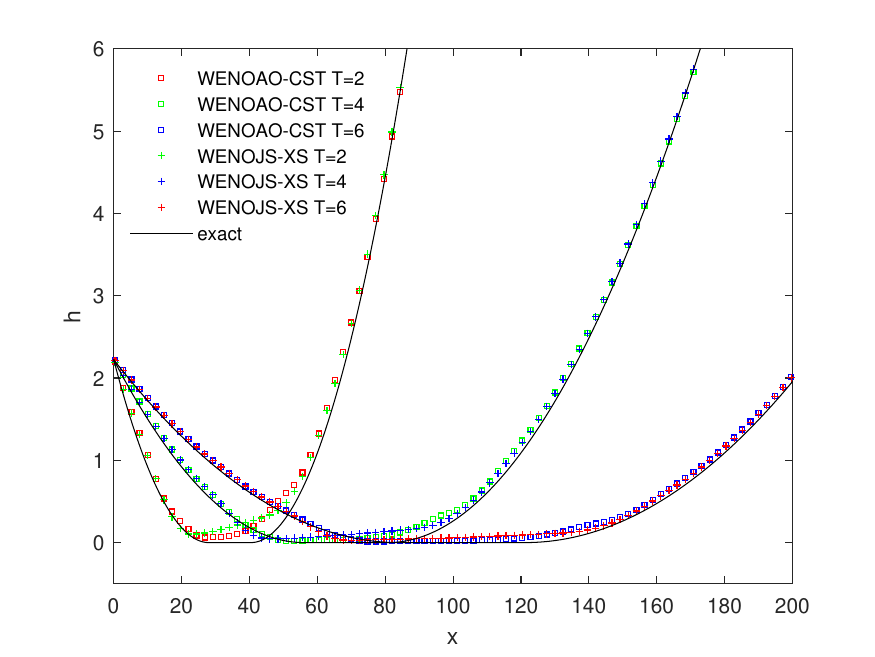}\label{c1}}
\subfigure[$ hu$]{
\includegraphics[width=0.45\textwidth,clip]{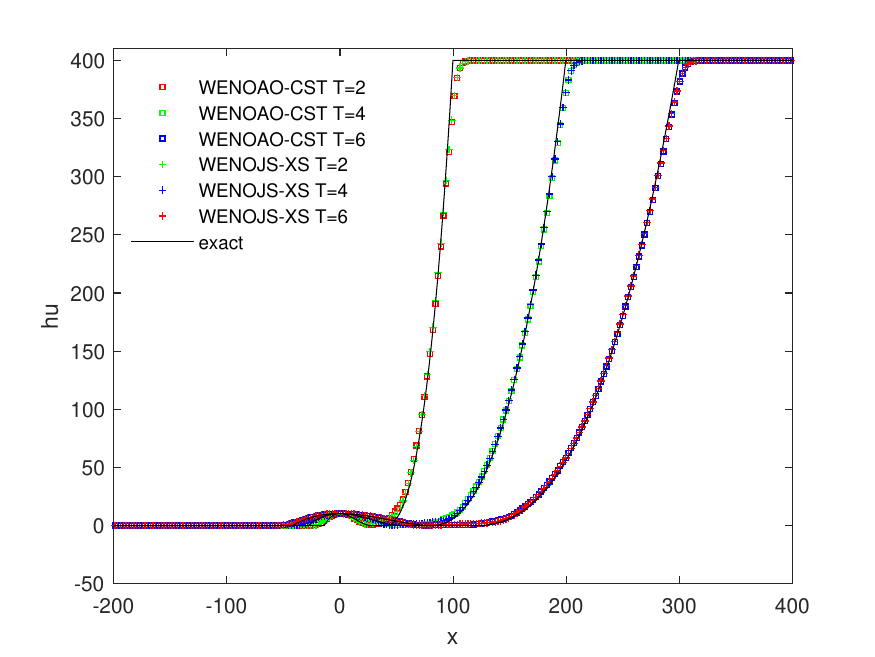}}
 \subfigure[zoom-in of (c) around dry region]
 {\includegraphics[width=0.45\textwidth,clip]{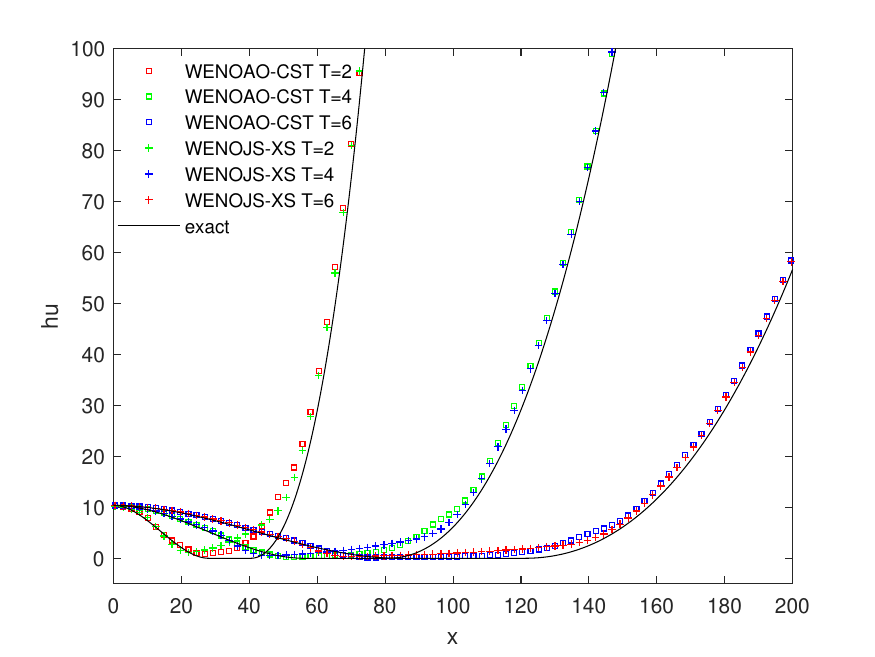}\label{d1}}
\caption{The water height $h$ and discharge $hu$ for the initial data \eqref{remain2}. $T=2,~4,~6$ and $N_x=250$. Square: WENOAO-CST; plus: WENOJS-XS; solid line: exact solutions.}
\label{figex7-1003}
\end{figure}

\begin{figure}[H]
\centering
\subfigure[$h+b,~b$]{
\includegraphics[width=0.45\textwidth,clip]{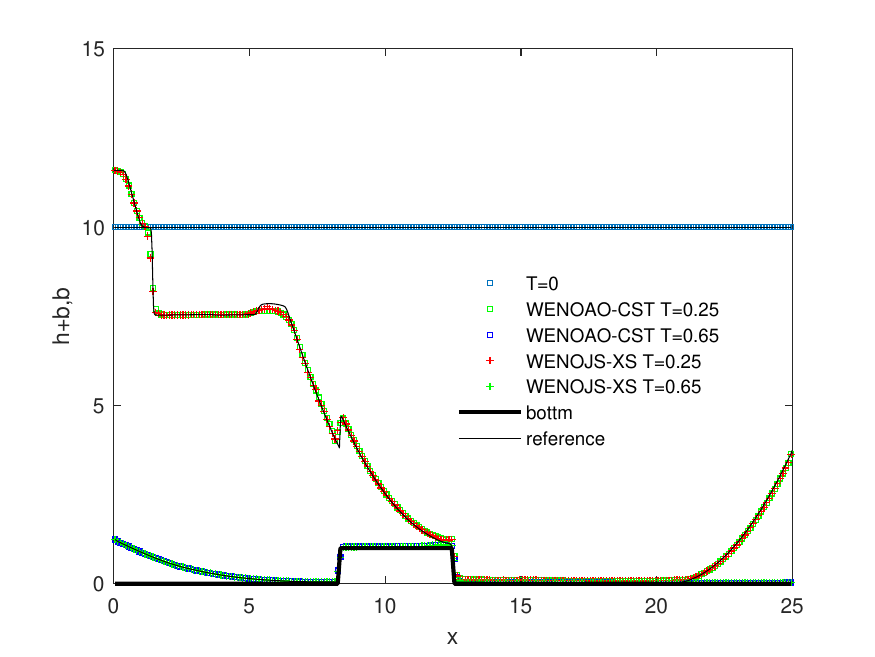}}
\subfigure[$ hu$]{
\includegraphics[width=0.45\textwidth,clip]{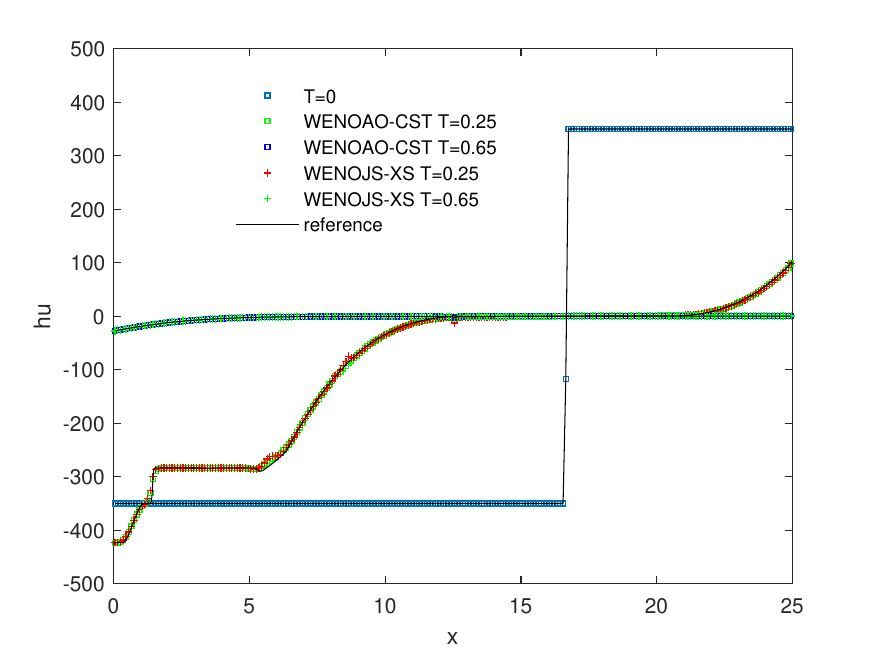}}
\caption{{ The surface level $h + b$ and discharge $hu$ for the initial data \eqref{remain3}. $T=0,~0.25,~0.65$ and $N_x=250$.  Square: WENOAO-CST; plus: WENOJS-XS; solid line: reference solutions.} }
\label{figex8-1001}
\end{figure}

\begin{example}
{ (The exact C-property and positivity-preserving tests in 2D)}\label{smooth-2D}
\end{example}
{ This example is used to demonstrate the well-balanced and positivity-preserving properties of the WENOAO-CST method for 2D SWEs over the non-flat bottoms.}
The computational domain is $[0,1]\times [0,1]$, and the initial conditions are set to be steady state solution
\ben
H(x,y,0)=h(x,y,0)+b(x,y)=1, \, (h u)(x,y,0)=0, \, (h v)(x,y,0)=0.\nonumber
\een
Two bottom topographies are considered
\begin{align}
b(x, y)&=0.8 e^{-50\left((x-0.5)^{2}+(y-0.5)^{2}\right)},\label{wb_2D1} \\
b(x, y)&=e^{-50\left((x-0.5)^{2}+(y-0.5)^{2}\right)}.\label{wb_2D2}
\end{align}
The surface level $h+b$ should remain flat.
The solutions are computed up to $T=0.1$ with $N_x\times N_y=100 \times 100$.
The $L^1$ and $L^\infty$ errors
of $h$, $h u$, and $h v$  are shown in Table \ref{tab:ex6:001}.
We can observe that the numerical errors of the WENOAO-CST method are at the level of round-off errors, confirming the well-balanced property.
{
Notice that the PP limiter is needed for the bottom topography (\ref{wb_2D2}), and no negative water height is generated during the computation.}

\begin{table}[H]
\caption{The $L^1$ and $L^\infty$ errors for the 2D exact C-property tests. $T=0.1$ and $N_x= N_y=100$. }
\vspace{5pt}
\centering
\label{tab:ex6:001}
\begin{tabular}{  c  c  c  c c c c }
\toprule
\multirow{2}{*}{bottom} &\multicolumn{3}{c}{$L^1$ error}&\multicolumn{3}{c}{$L^\infty$ error}\\
&$h$~~~&~~$hu$&~~$hv$&$h$~~~&~~$hu$&~~$hv$\\
\midrule
 (\ref{wb_2D1})&  9.11e-15   & 1.54e-15  & 1.54e-15 &1.12e-14& 8.97e-15 &8.72e-15\\
  (\ref{wb_2D2})& 4.03e-15   & 1.07e-15  & 1.40e-15 &1.08e-14& 1.28e-14 &5.68e-15\\
\bottomrule
\end{tabular}
\end{table}

\begin{example}
(The accuracy test in 2D)\label{accuracy test-2D}
\end{example}

We consider the 2D accuracy test with a non-flat bottom topography.
The computational domain is $[0,1]\times[0,1]$, and the initial conditions are
\begin{align*}
h(x, y, 0)&=10+e^{\sin (2 \pi x)} \cos (2 \pi y), \\
(h u)(x, y, 0)&=\sin (\cos (2 \pi x)) \sin (2 \pi y), \\
(h v)(x, y, 0)&=\cos (2 \pi x) \cos (\sin (2 \pi y)),\nonumber
\end{align*}
with periodic boundary conditions. The bottom topography is given by
\ben
\begin{array}{c}
b(x, y)=\sin (2 \pi x)+\cos (2 \pi y).
\end{array}
\een
We compute the solutions up to $T=0.05$ when they are smooth.
We adopt the WENOJS-XS method with $N_x = N_y = 1600$ to compute the reference solutions and treat them as the exact solutions to compute the numerical errors.
The $L^1$ and $L^\infty$ errors and orders by the  WENOAO-CST method are listed in Tables \ref{tab:ex9:001} and \ref{tab:ex9:002}.
In the 2D case, our WENOAO-CST method achieves the designed fifth-order accuracy.

To show the efficiency of the WENOAO-CST method, we present the $L^1$ and $L^\infty$ errors versus the CPU time for $h$, $hu$ and $hv$ by the WENOAO-CST and WENOJS-XS methods in Fig.~\ref{figex9-1001}. From the results, one can see that our WENOAO-CST method is slightly more efficient than the WENOJS-XS.

\begin{table}[H]
\caption{$L^1$ errors and orders of the WENOAO-CST method for the 2D accuracy test. $T=0.05.$}
\label{tab:ex9:001}
\vspace{5pt}
\centering
\begin{tabular}{ c  c  c  c  c }
\toprule
\multirow{2}{*}{$N_x = N_y$}&\multirow{2}{*}{CFL}&$h$&$hu$&$hv$\\
  & & error~~~~ order&  error~~~~ order& error~~~~ order\\
\midrule
25 &0.6 & 7.15e-03~~~~~~- & 1.82e-02 ~~~~~~- &5.89e-02~~~~~~- \\
50 &0.6 & 9.40e-04 ~2.93 &  1.44e-03  ~3.66 & 7.61e-03 ~2.95 \\
100 &0.4 & 6.80e-05 ~3.79 &  7.80e-05  ~4.21 &  5.51e-04 ~3.79\\
200 &0.3 & 3.06e-06   ~4.47 & 3.28e-06  ~4.57& 2.47e-05 ~4.48\\
400 &0.2 & 1.07e-07   ~4.84 & 1.17e-07    ~4.81 & 8.59e-07 ~4.84\\
800 &0.1 & 3.39e-09  ~4.98 & 3.68e-09      ~4.98 & 2.72e-08 ~4.98\\
\bottomrule
\end{tabular}
\end{table}

\begin{table}[H]
\caption{$L^\infty$ errors and orders of the WENOAO-CST method for the 2D accuracy test. $T=0.05.$ }
\label{tab:ex9:002}
\vspace{5pt}
\centering
\begin{tabular}{ c  c  c  c  c }
\toprule
\multirow{2}{*}{$N_x = N_y$}&\multirow{2}{*}{CFL}&$h$&$hu$&$hv$\\
  & & error~~~~ order&  error~~~~ order& error~~~~ order\\
\midrule
25 &0.6 & 6.74e-02~~~~~~- & 9.40e-02 ~~~~~~- &4.27e-02~~~~~~- \\
50 &0.6 & 1.97e-02 ~1.78 &  2.02e-02  ~2.22 & 1.57e-01 ~1.44 \\
100&0.4 & 2.61e-03 ~2.92 &  1.95e-03  ~3.37 &  1.87e-02 ~3.06\\
200&0.3 & 2.12e-04   ~3.62 & 1.22e-04  ~4.00& 1.57e-03 ~3.58\\
400&0.2 & 8.62e-06   ~4.62 & 4.86e-06    ~4.65 & 6.47e-05 ~4.60\\
800&0.1 & 2.73e-07  ~4.99 & 1.55e-07      ~4.97 & 2.04e-06 ~4.99\\
\bottomrule
\end{tabular}
\end{table}

\begin{figure}[H]
\centering
\subfigure[$h$]{
\includegraphics[width=0.31\textwidth,trim=52 0 52 10,clip]{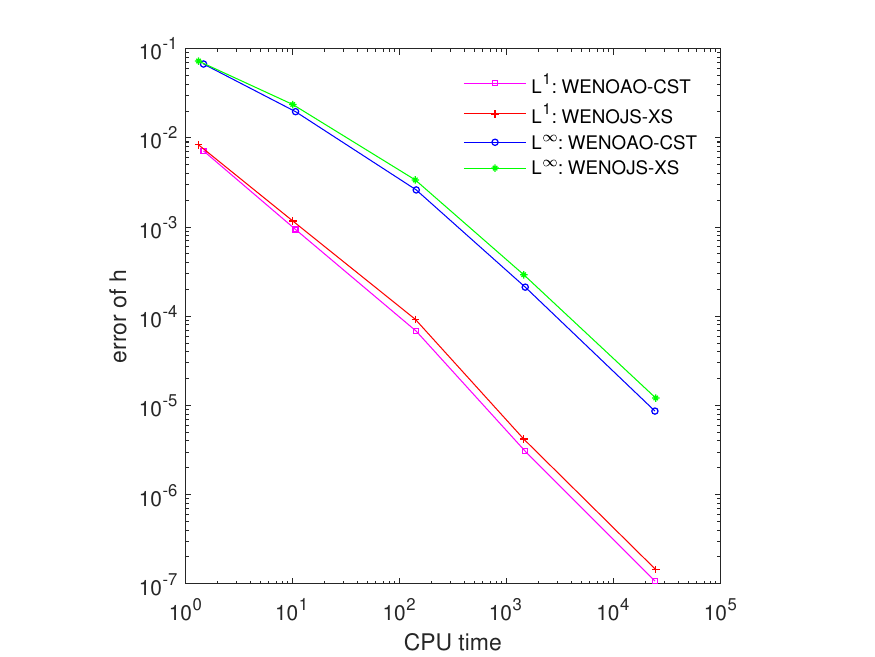}}
\subfigure[$ hu$]{
\includegraphics[width=0.31\textwidth,trim=52 0 52 10,clip]{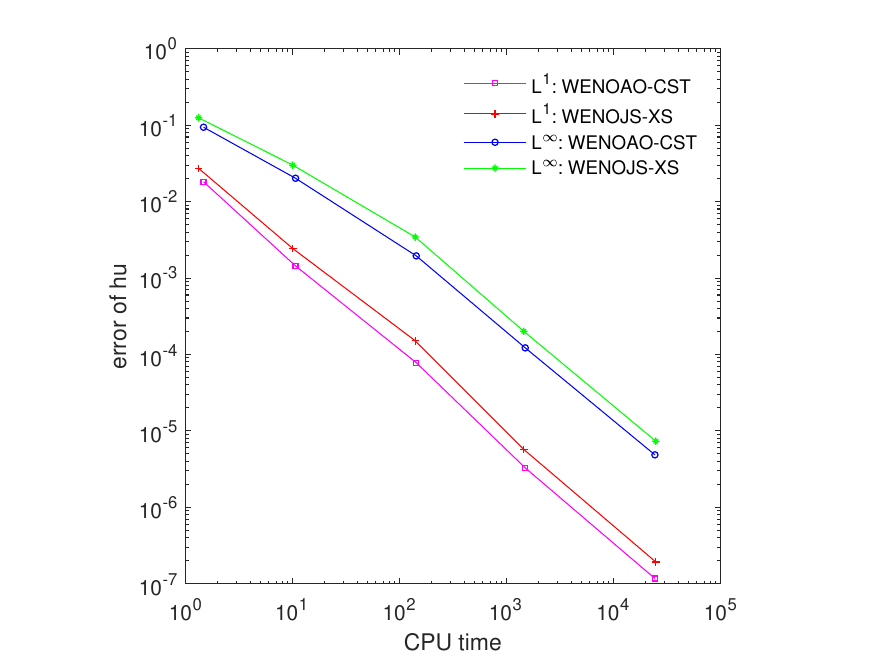}}
\subfigure[$ hv$]{
\includegraphics[width=0.31\textwidth,trim=52 0 52 10,clip]{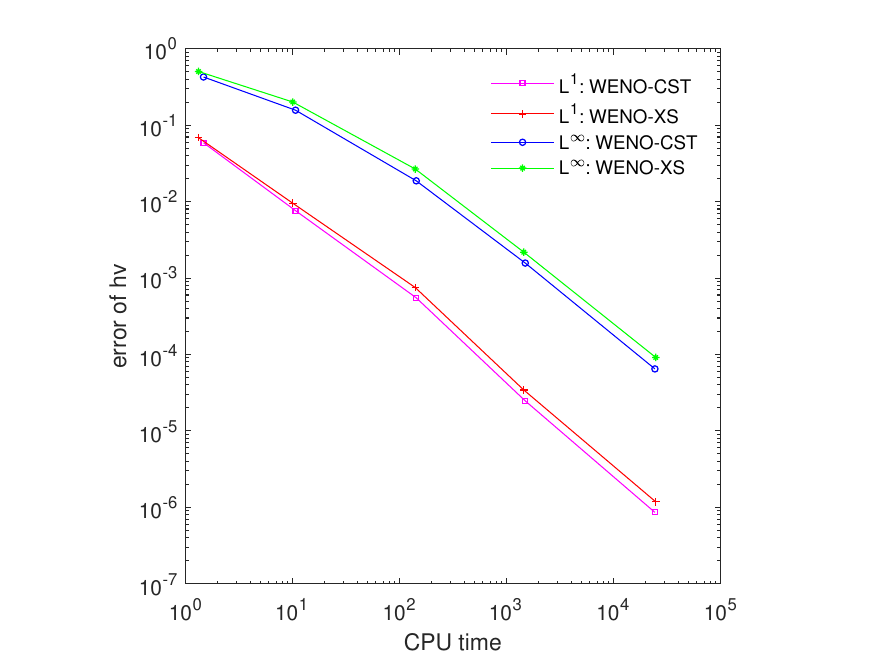}}
\caption{The numerical $L^1$ and $L^\infty$ errors vs. CPU time of the water height $h$ and discharges $hu$, $hv$ by the WENOAO-CST and WENOJS-XS methods.  $T= 0.05$.}
\label{figex9-1001}
\end{figure}

\begin{example}
(The small perturbation of a steady-state flow in 2D)\label{2D_small_per}
\end{example}
We consider this problem to show that the WENOAO-CST method can resolve the complex feature of the perturbation of a steady-state flow.
The computational domain is $[0,2] \times[0,1]$, and the
bottom topography is an isolated hump
$$
b(x, y)=0.8 e^{-5(x-0.9)^{2}-50(y-0.5)^{2}}.
$$
The initial conditions are
\begin{align*}
&h(x, y, 0)=\left\{\begin{array}{ll}
1-b(x, y)+0.01, &   x \in [0.05,~0.15], \\
1-b(x, y), & \text {else},\end{array}\right. \\
&h u(x, y, 0)=h v(x, y, 0)=0.
\end{align*}
The surface level is thus nearly flat except for $x \in [0.05,~0.15]$, where $h$ is perturbed upward by $0.01$. We study the propagation of the right-going disturbance over the hump, and
the surface level $h+b$ at different times $ T = 0.12,~0.24,~0.36,~0.48,~0.60$ by the WENOAO-CST method are presented in Fig.~\ref{figex6001} with  $N_x\times N_y=200 \times 100$ and $600 \times 300$, respectively, for comparison.
The results show that our method can capture the small features of this problem well.

\begin{figure}[H]
\centering
\subfigure[$N_x \times N_y=200\times 100, T=0.12$]{
\includegraphics[width=0.45\textwidth,clip]{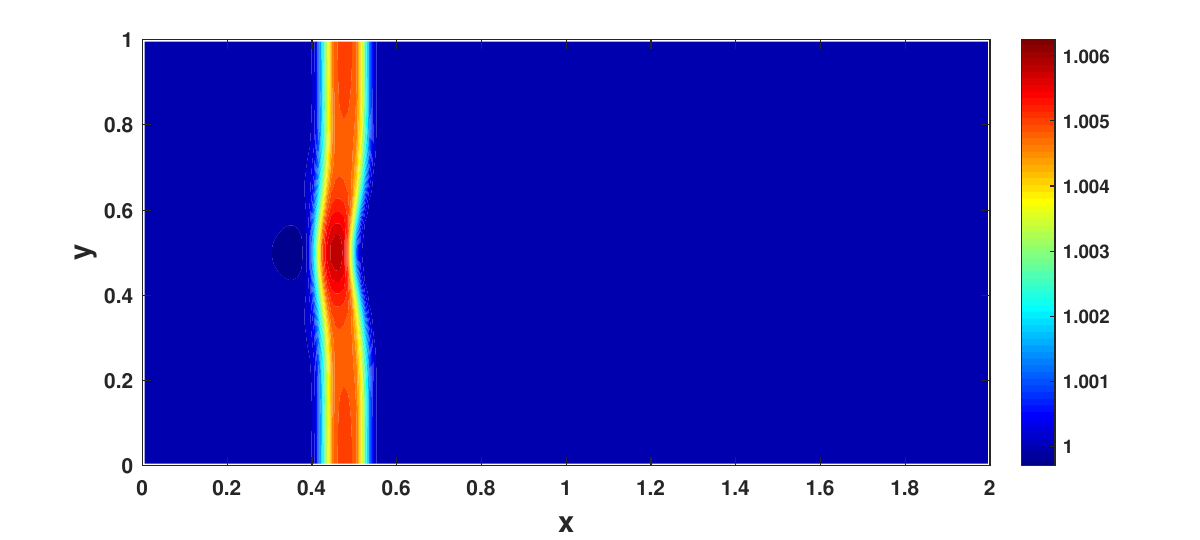}}
\subfigure[$N_x \times N_y=600\times 300, T=0.12$]{
\includegraphics[width=0.45\textwidth,clip]{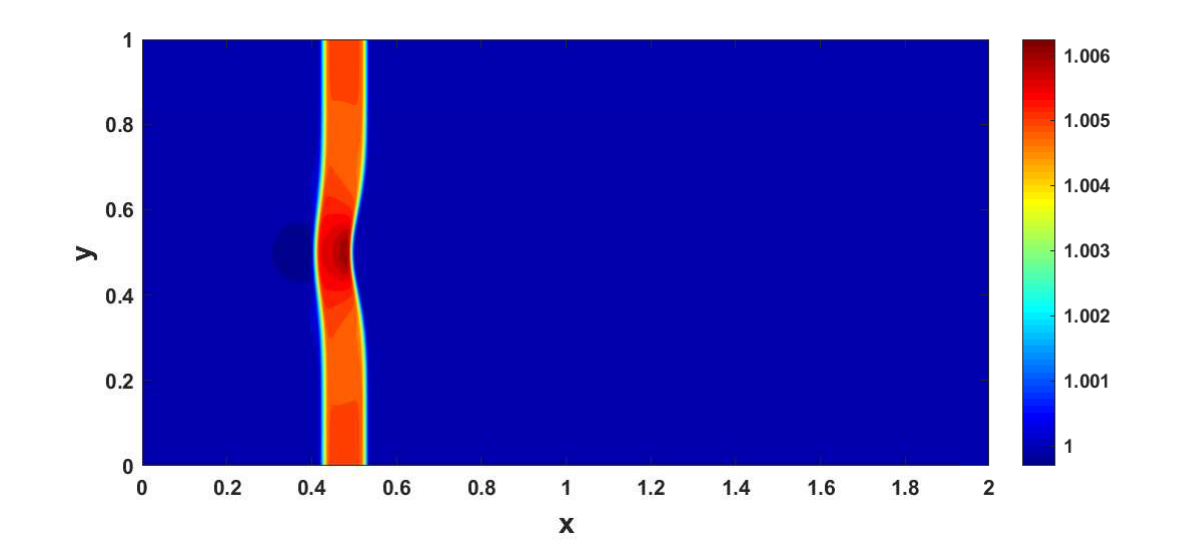}}\\
\subfigure[$N_x \times N_y=200\times 100, T=0.24$]{
\includegraphics[width=0.45\textwidth,clip]{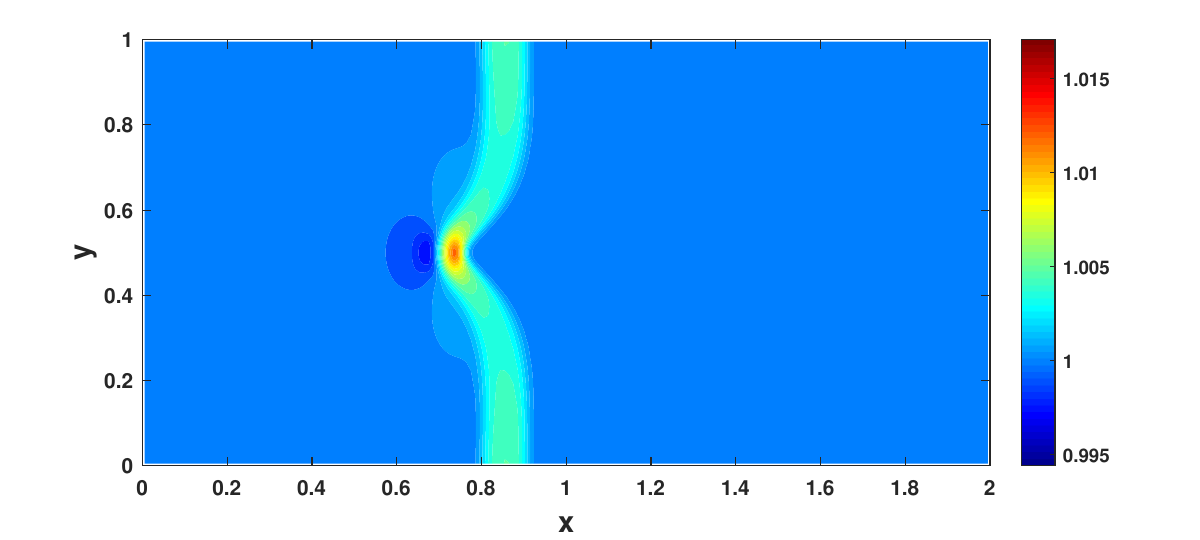}}
\subfigure[$N_x \times N_y=600\times 300, T=0.24$]{
\includegraphics[width=0.45\textwidth,clip]{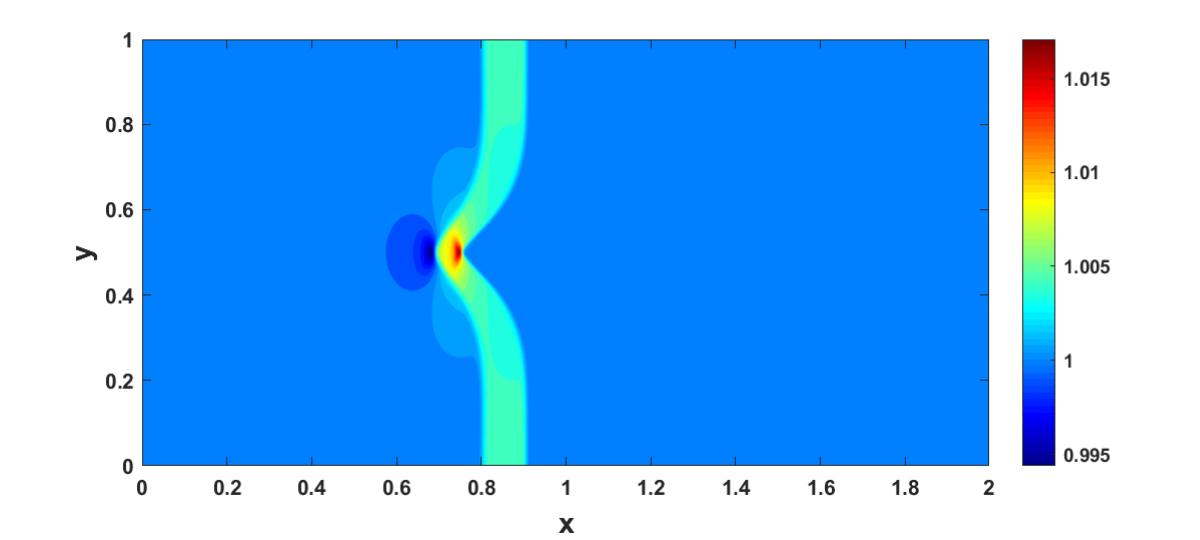}}\\
\subfigure[$N_x \times N_y=200\times 100, T=0.36$]{
\includegraphics[width=0.45\textwidth,clip]{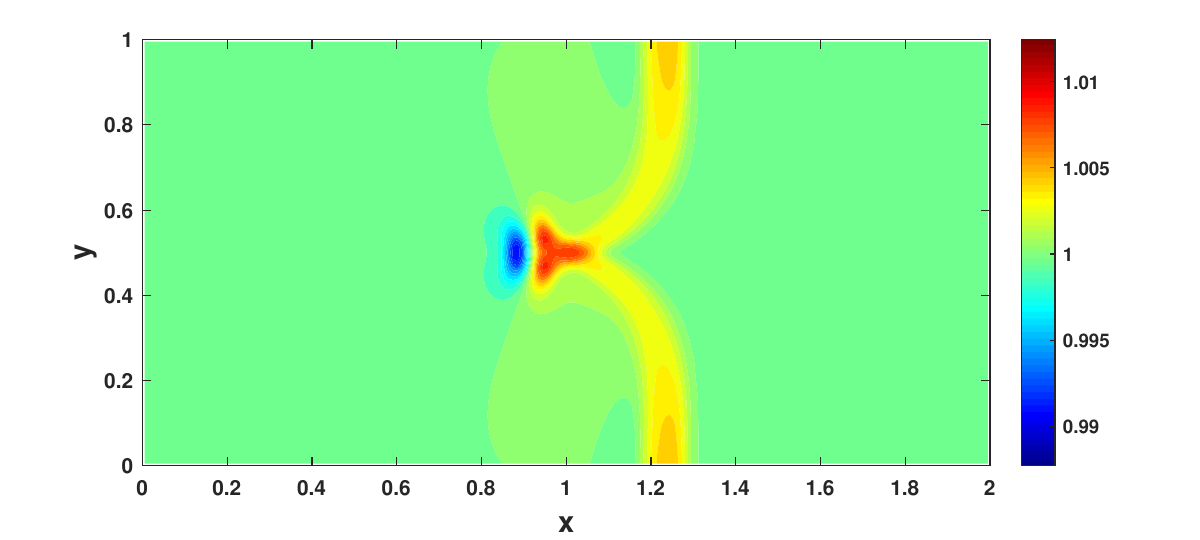}}
\subfigure[$N_x \times N_y=600\times 300, T=0.36$]{
\includegraphics[width=0.45\textwidth,clip]{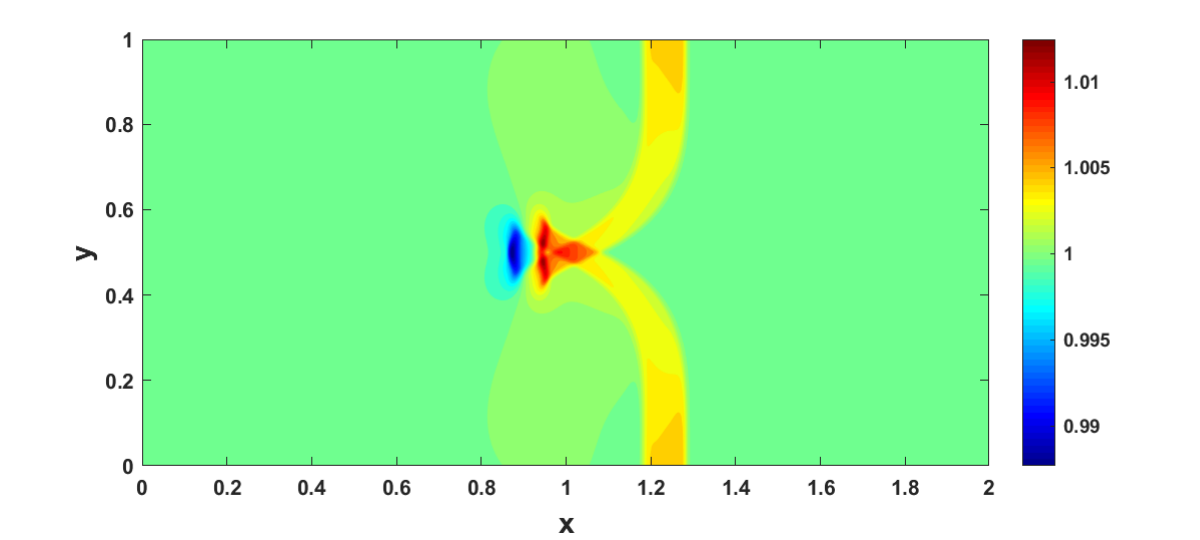}}\\
\subfigure[$N_x \times N_y=200\times 100, T=0.48$]{
\includegraphics[width=0.45\textwidth,clip]{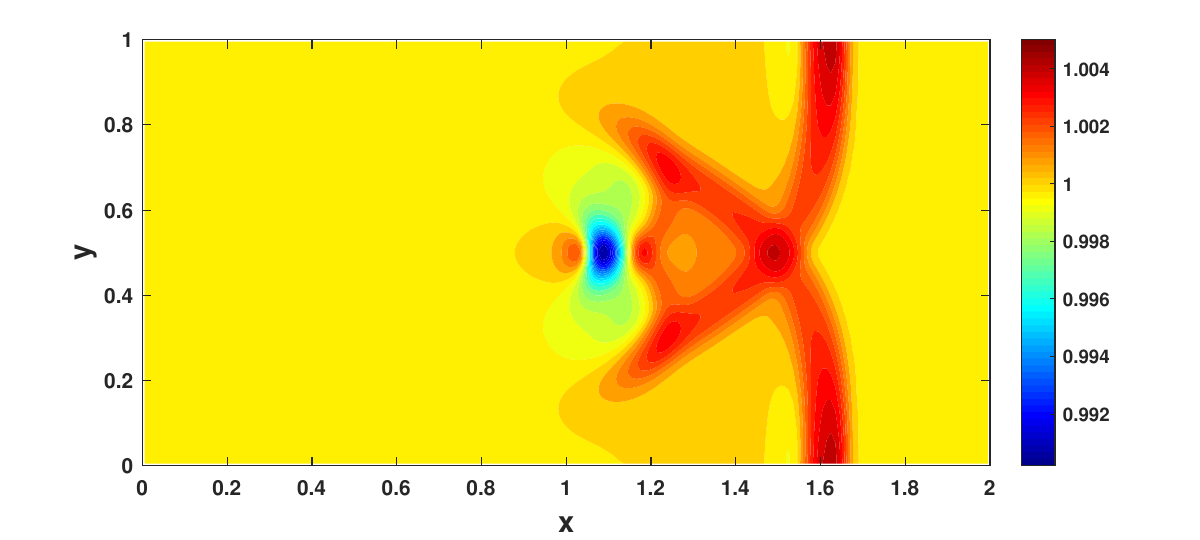}}
\subfigure[$N_x \times N_y=600\times 300, T=0.48$]{
\includegraphics[width=0.45\textwidth,clip]{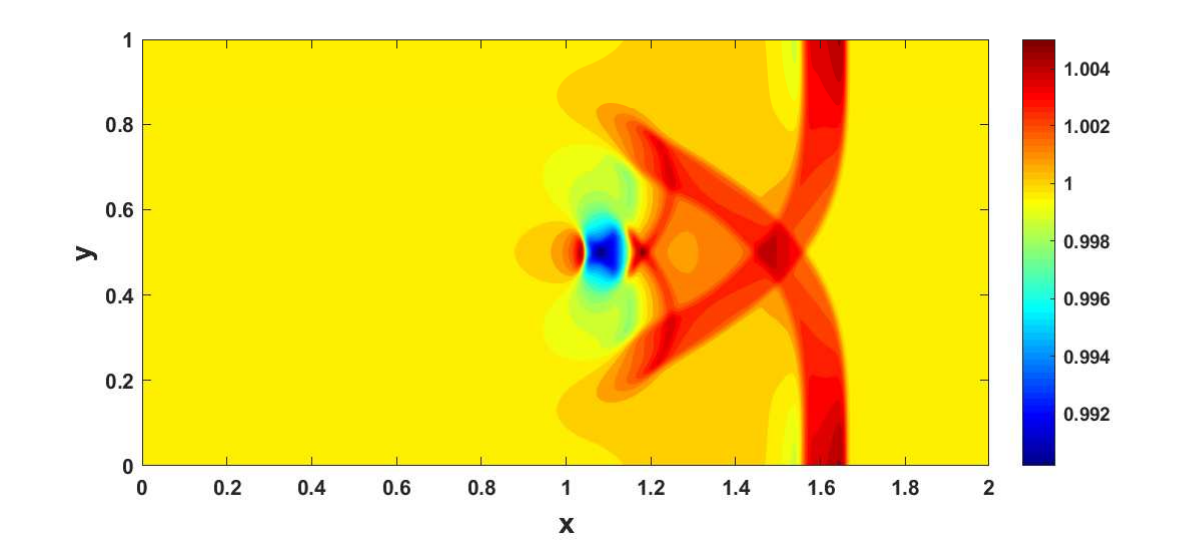}}\\
\subfigure[$N_x \times N_y=200\times 100, T=0.60$]{
\includegraphics[width=0.45\textwidth,clip]{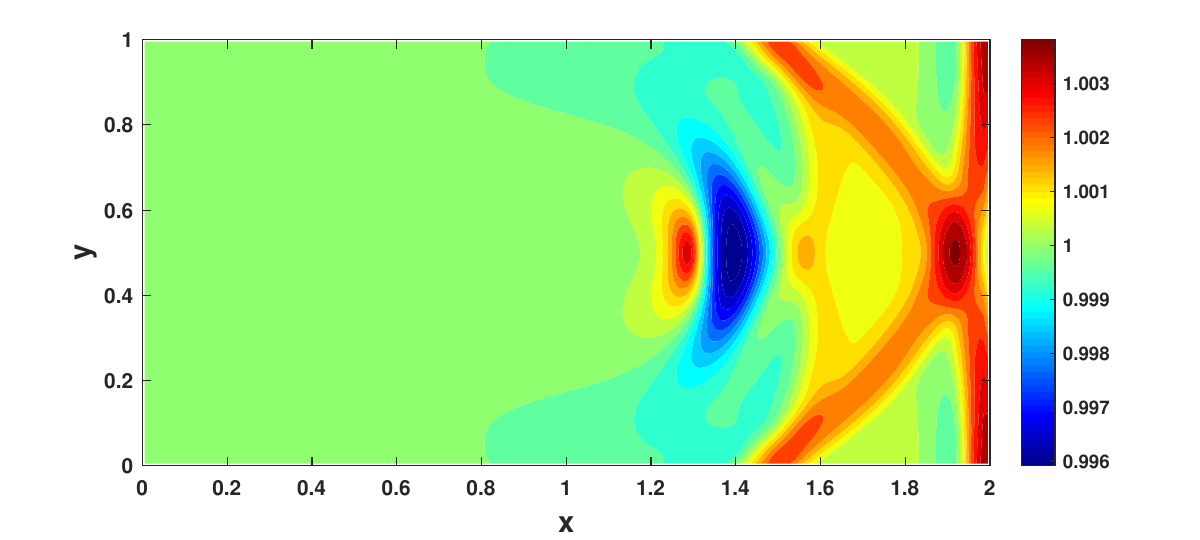}}
\subfigure[$N_x \times N_y=600\times 300, T=0.60$]{
\includegraphics[width=0.45\textwidth,clip]{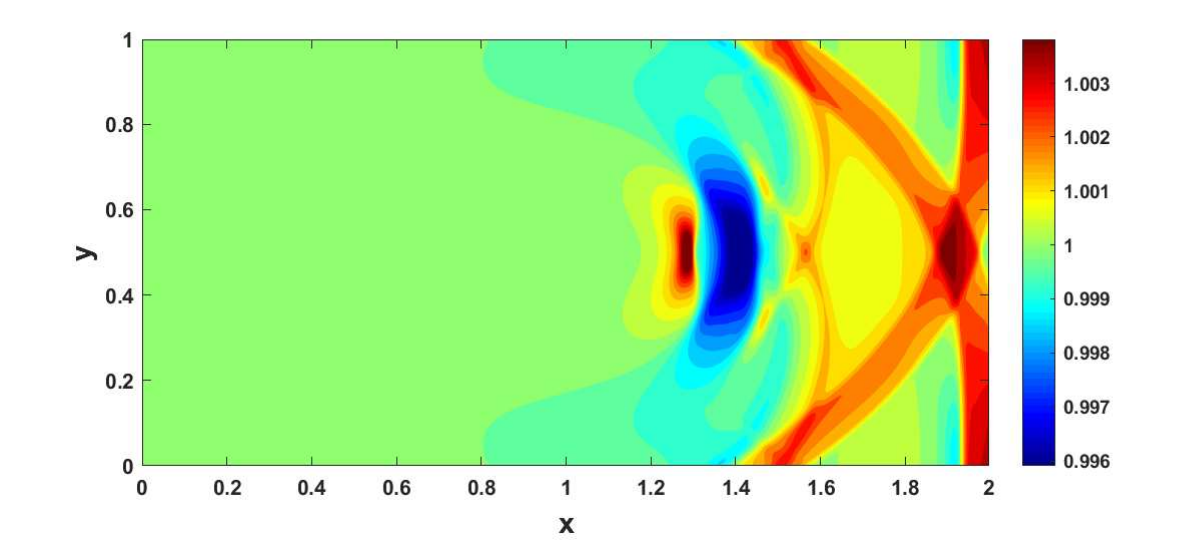}}\\
\caption{The contours of the surface level $h+b$ by the WENOAO-CST method for small perturbation of a steady-state flow in 2D. 30 uniformly spaced contour lines. }
\label{figex6001}
\end{figure}

\begin{example}
(The circular dam break in 2D)\label{circular_Dam}
\end{example}
We model the 2D circular dam break problem  \cite{capilla2013new,WANG20201387}. The computational domain is  $[0,2] \times[0,2]$ and the bottom topography and the initial conditions are
\begin{align*}
&b(x, y)=\begin{cases}
\frac{1}{8}(\cos (2 \pi(x-0.5))+1)(\cos (2 \pi y)+1), & \sqrt{(x-1.5)^{2}+(y-1)^{2}} \leqslant 0.5, \\
0, & \text {else. }
\end{cases}\\
&H(x, y, 0)=\begin{cases}
1.1, & \sqrt{(x-1.25)^{2}+(y-1)^{2}} \leqslant 0.1, \\
0.6, & \text {else, }
\end{cases}
\quad
hu(x, y, 0)=hv(x, y, 0)=0 .
\end{align*}
The solutions are calculated up to $T=0.15$ with $N_x=N_y=200$.  The surface level $h+b$ by the WENOAO-CST and WENOJS-XS methods, as well as the cut of the corresponding results along the line $y=1$, are shown in Fig.~\ref{figex12001}.
One can see that our WENOAO-CST method gives well-resolved and non-oscillatory solutions for this problem, and the results of the two methods are comparable.

\begin{figure}[H]
\centering
\subfigure[$h+b$]{
\includegraphics[width=0.45\textwidth,clip]{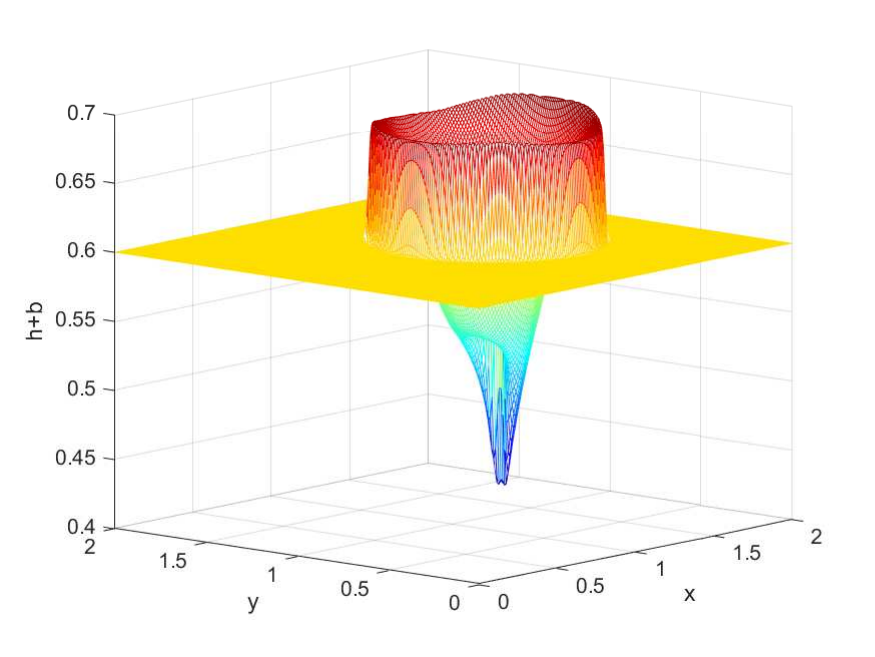}}
\subfigure[$h+b$]{
\includegraphics[width=0.45\textwidth,clip]{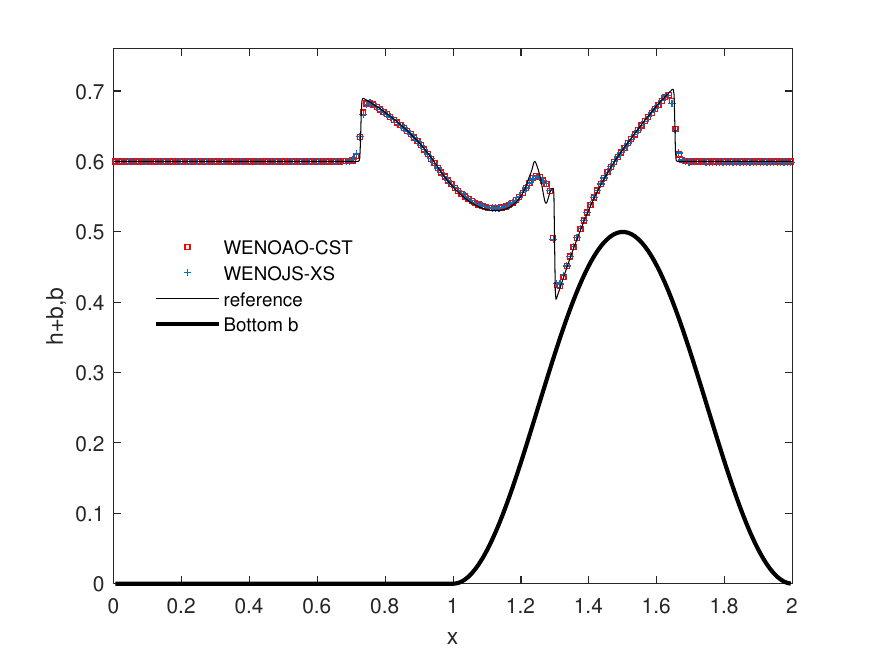}}
\caption{2D circular dam break problem.
The surface level $h +b$  and the cut of the
corresponding results along the line $y = 1$.
$T = 0.15$ and $N_x = N_y = 200 $.
Square: WENOAO-CST; plus: WENOJS-XS; solid line: reference solutions.}
\label{figex12001}
\end{figure}

\begin{example}
{ (The positivity-preserving tests over the flat, rectangle, and hump bottoms in 2D)}\label{Oblique_2D}
\end{example}
{
This example is used to demonstrate the positivity-preserving property of our WENOAO-CST method for the 2D SWEs over the flat, rectangle, and hump bottom topographies, respectively.}
We first investigate the evolution of water over a flat bottom \cite{berthon2008positive}, which results in a moving front with an inclination of $45^{\circ}$ for the boundary of the computational domain $[-0.5,0.5] \times[-0.5,0.5] $.
The initial conditions are provided by
\begin{equation}\label{oblique}
h(x, y, 0)=\begin{cases}
1, &  x+y \leqslant 0, \\
0, & \text {else, }
\end{cases}
\quad  h u(x, y, 0)=h v(x, y, 0)=0,
\end{equation}
with the transmissive boundary condition in each direction.
Half of the domain is dry area, and the other half has still water with a water height equal to 1.
The analytical solutions to this problem can be found in the literature \cite{bokhove2005flooding}.
At different times $T = 0,~ 0.02,~0.06,~ 0.1$, the water height $h$ with $N_x=N_y=100$ on the central cross section ($x = y$)  orthogonal to the propagating front are displayed in Fig.~\ref{figex11002} with the exact solutions.
A good agreement between the numerical and exact solutions is observed, and both methods can get the non-oscillatory and non-negative solutions for this problem.

\vspace{8pt}

{
Next, we consider the tests on the computational domain $[0, 25]\times[0, 25]$, and the initial conditions are set as
\begin{equation}\label{remain4}
\begin{split}
&H(x,y, 0) = 10,\quad
h u(x,y, 0)=\begin{cases}
-350, &  x \leqslant \frac{50}{3}, \\
350, & \text { else, }
\end{cases}
\quad hv(x,y,0)=0,
\end{split}
\end{equation}
with the rectangle and hump bottoms
\begin{align}
&b(x,y)=\begin{cases}
1, & \frac{25}{3} \leqslant x \leqslant \frac{25}{2}, \\
0, & \text { else,}
\end{cases}\label{b1}
\\&
b(x,y)=\begin{cases}
\sin\left(6\pi(\frac{x}{25}-\frac{1}{3})\right), & \frac{25}{3} \leqslant x \leqslant\frac{25}{2}, \\
0, & \text { else.}
\end{cases}\label{b2}
\end{align}
The transmissive boundary condition is applied in each direction.
The water surface  at different times  $T=0,~0.05,~0.25$,  and $0.65$ are shown in Fig.~\ref{figex12-1001} with $N_x=N_y =250$.
Furthermore, we present the water surface
in the central cross section ($x = y$) in Fig.~\ref{figex12-1002}.
We can observe that results of the two  schemes are comparable, and both schemes can
capture the solutions well in the dry region.
}

\begin{figure}[H]
\centering
\includegraphics[width=0.45\textwidth,trim=20 0 20 10,clip]{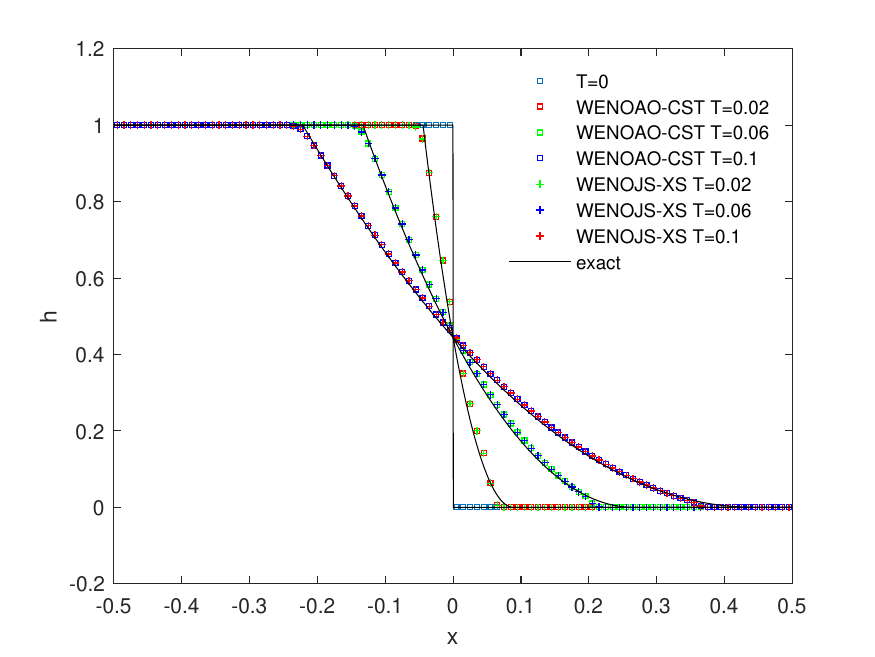}
\caption{
{
The water height $h$ at different times $T = 0,~ 0.02,~0.06,~ 0.1$ in the central cross-section ($x = y$) for 2D SWEs with initial condition \eqref{oblique}. $N_x= N_y=100$. Square: WENOAO-CST; plus: WENOJS-XS; solid line: exact solutions.}}
\label{figex11002}
\end{figure}

\begin{figure}[H]
\centering
\subfigure[bottom \eqref{b1}]{
\includegraphics[width=0.45\textwidth,clip]{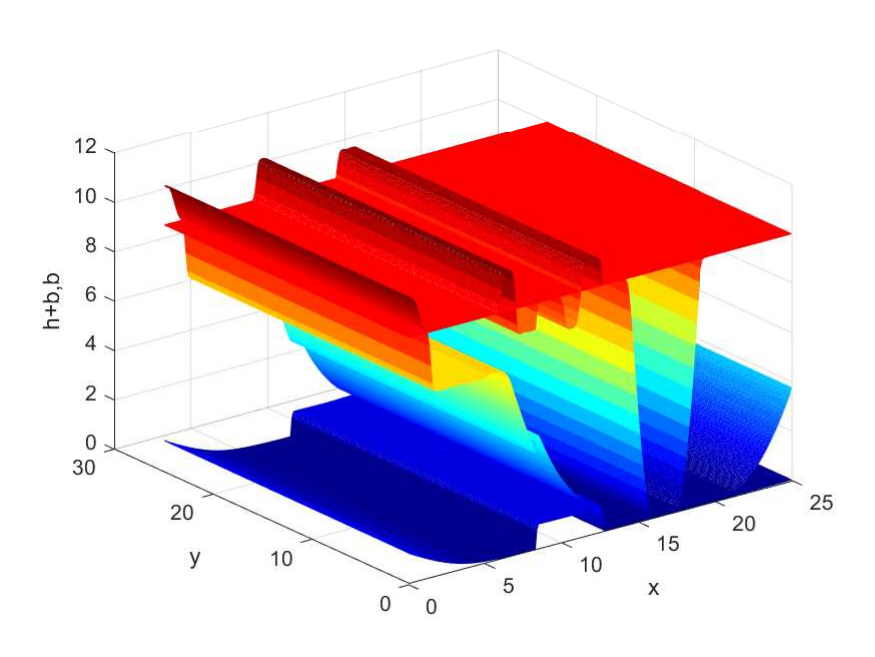}}
\subfigure[bottom \eqref{b2}]{
\includegraphics[width=0.45\textwidth,clip]{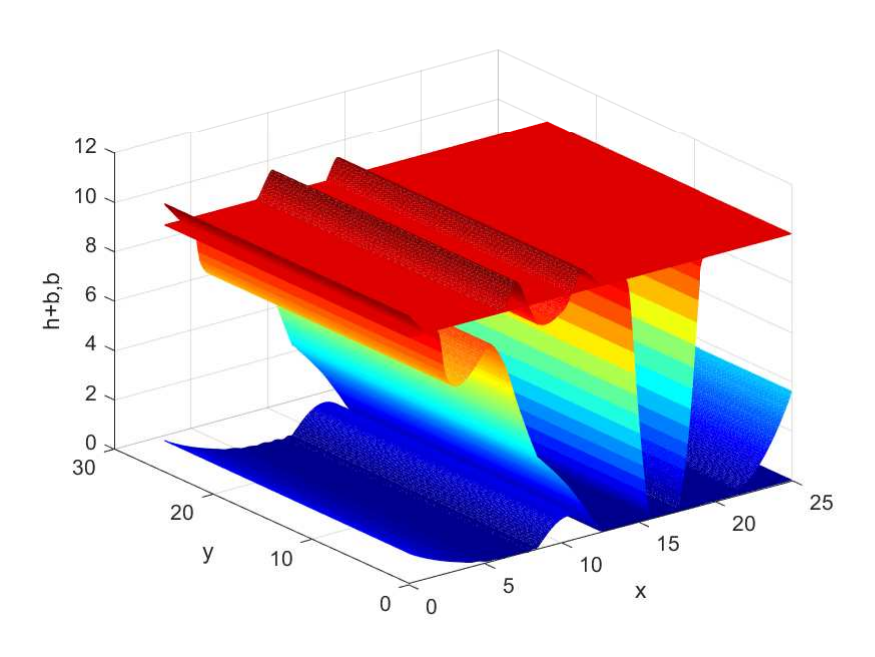}}
\caption{{ The surface level $h + b$ at $T=0,~0.05,~0.25,~0.65$ by the WENOAO-CST method for the 2D SWEs with the initial condition \eqref{remain4} for bottoms \eqref{b1} (left) and \eqref{b2} (right).
$N_x= N_y =250$.
} }
\label{figex12-1001}
\end{figure}

\section{Conclusions}
\label{sec_con}
In this paper, a fifth-order well-balanced finite volume WENOAO-CST method is developed to solve one- and two-dimensional SWEs with flat and non-flat bottom topographies { in the CST pre-balanced form}.
The WENO-AO approach is used in the reconstruction to ensure the high-order accuracy and non-oscillatory properties of the method.
Compared with other WENO reconstruction schemes( e.g., WENO-ZQ and WENO-MR reconstructions), the WENO-AO reconstruction has better resolution for some numerical examples.
Furthermore, a PP limiter is adopted for the computation of solutions with dry areas.
The proposed WENOAO-CST method outperforms the WENOJS-XS method \cite{xing2011high} in terms of accuracy and efficiency verified by the numerical results.
Future work also includes the application of the finite volume WENOAO-CST method for other related models, such as the SWEs in open channels with irregular geometry.

\section*{Acknowledgements}
{
The authors thank the anonymous referees for their valuable comments and suggestions that helped improve the quality of the paper.}
Min Zhang is partially supported by the National Natural Science Foundation of China (Grant Number: 12301493).

\section*{Data availability}
Data sharing not applicable to this article as no datasets were generated or analysed during the current study.

\begin{figure}[H]
\centering
\subfigure[$h+b,~b,~T=0.05$]{
\includegraphics[width=0.45\textwidth,clip]{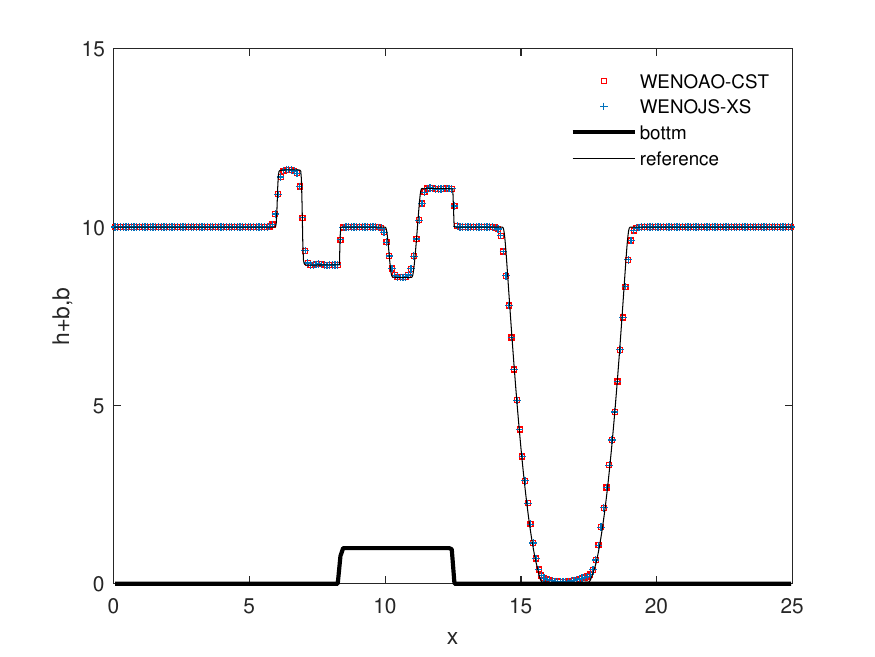}}
\subfigure[$h+b,~b,~T=0.05$]{
\includegraphics[width=0.45\textwidth,clip]{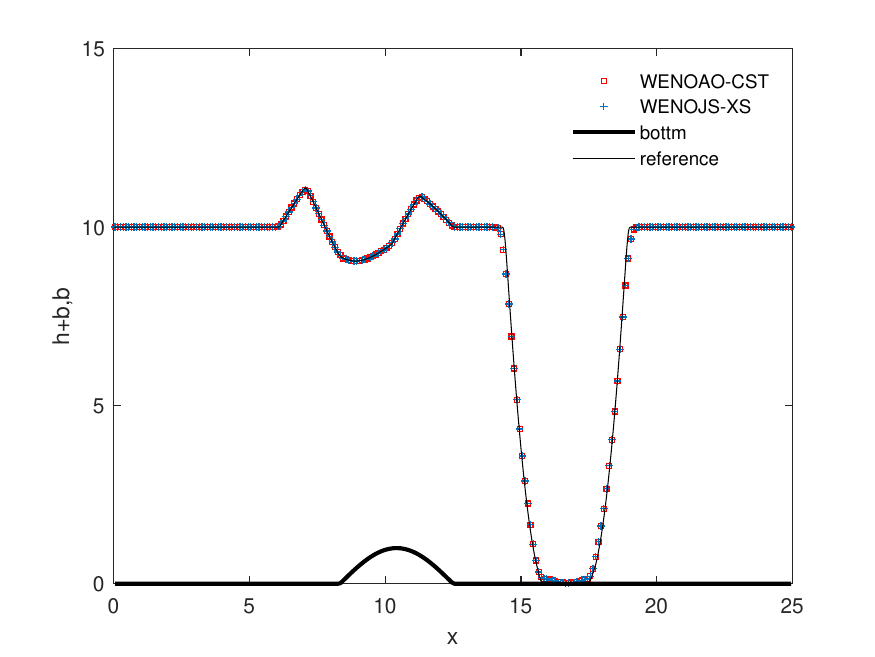}}
\subfigure[$h+b,~b,~T=0.25$]{
\includegraphics[width=0.45\textwidth,clip]{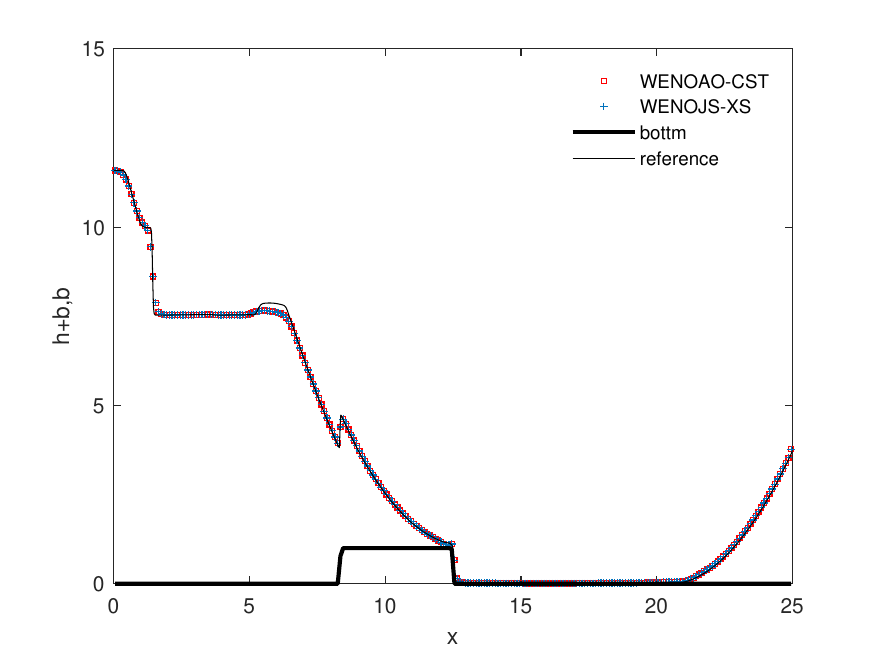}}
\subfigure[$h+b,~b,~T=0.25$]{
\includegraphics[width=0.45\textwidth,clip]{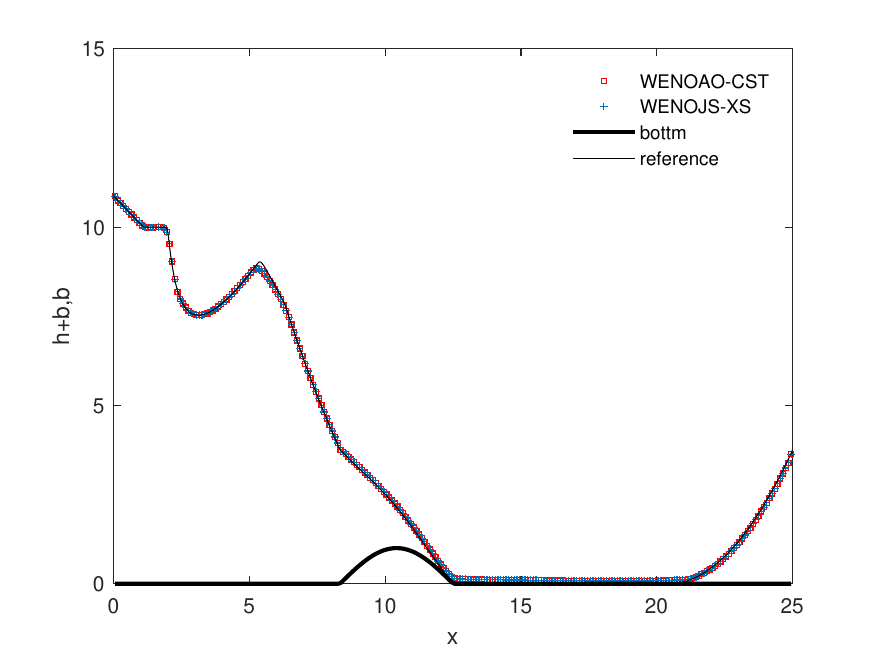}}
 \subfigure[$h+b,~b,~T=0.65$]{
\includegraphics[width=0.45\textwidth,clip]{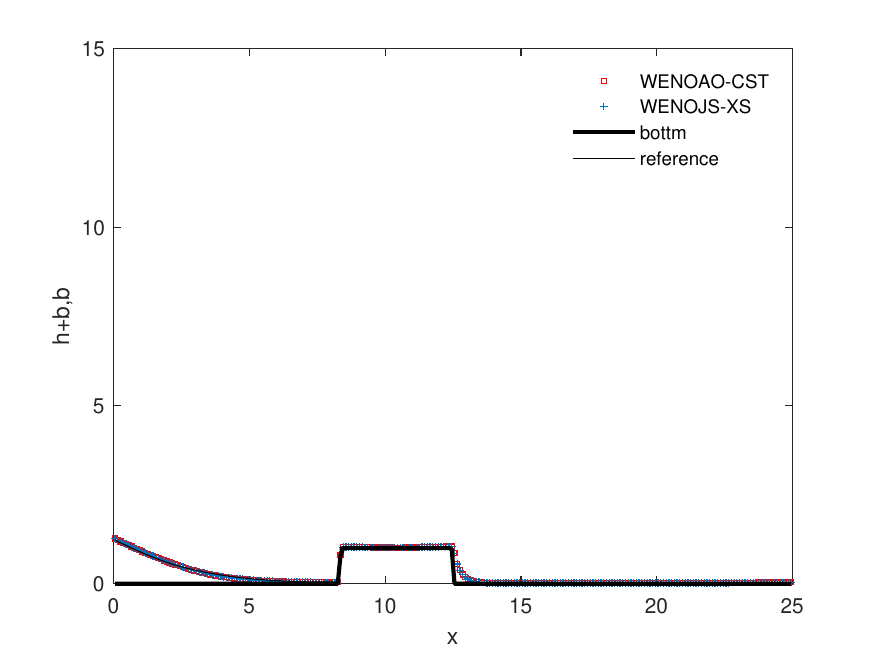}}
 \subfigure[$h+b,~b,~T=0.65$]{
\includegraphics[width=0.45\textwidth,clip]{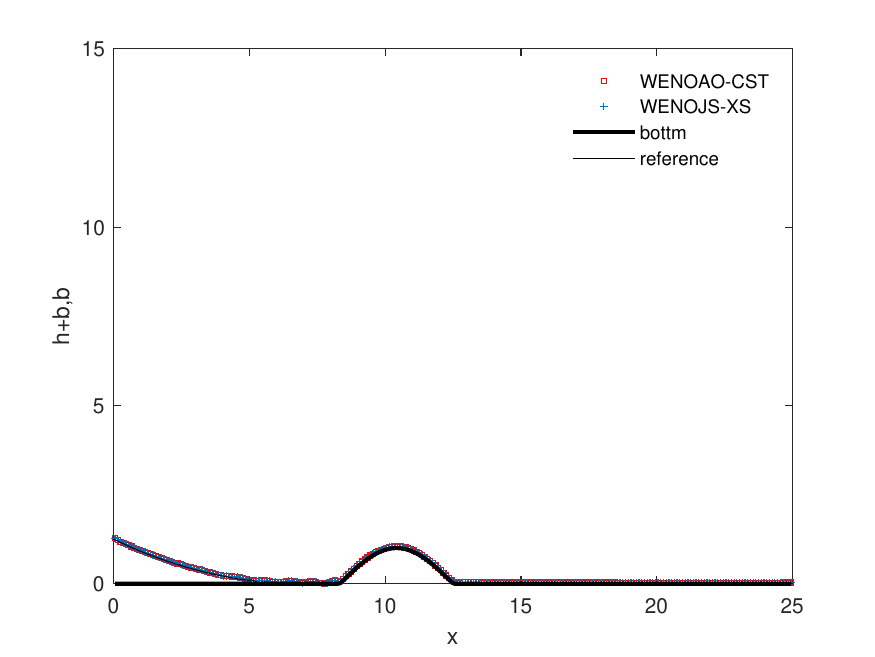}}
\caption{{ The surface level $h + b$ at $T=0.05,~0.25,~0.65$ in the central cross-section for the 2D SWEs with the initial condition \eqref{remain4} for bottoms \eqref{b1} (left) and \eqref{b2} (right). $N_x=N_y=250$. Square: WENOAO-CST; plus: WENOJS-XS; solid line: reference solutions.}}
\label{figex12-1002}
\end{figure}

\section*{Declaration of competing interest}
The authors declare that they have no known competing financial interests or personal relationships that could have appeared to influence the work reported in this paper.

\section*{Appendix A.\ The fifth-order WENO-AO reconstruction}
\label{app-A}
In this section, we introduce the fifth-order WENO-AO reconstruction and refer to \cite{BALSARA2016780} for further details.
Without loss of generality, we here only describe the detailed WENO-AO reconstruction of $u_{i +\ha}^{-}$ and $u_{i -\ha}^{+}$ based on the values $\left\{\overline{u}_{j}\right\}_{j=i-2}^{i+2}$.
We reconstruct three quadratic polynomials $p_{k}(x)~(k=1,2,3)$, and one quartic polynomial $p_{4}(x)$ satisfying the following conditions
\begin{flalign}
\frac{1}{\Delta x} \int_{I_{i+j}} p_{1}(x) d x=\overline{u}_{i+j}, \quad& j=-2,-1,0 , \nonumber\\
\frac{1}{\Delta x} \int_{I_{i+j}} p_{2}(x) d x=\overline{u}_{i+j},\quad & j=-1,0,1,  \nonumber \\
\frac{1}{\Delta x} \int_{I_{i+j}} p_{3}(x) d x=\overline{u}_{i+j}, \quad& j=0,1,2, \nonumber\\
\frac{1}{\Delta x} \int_{I_{i+j}} p_{4}(x) d x=\overline{u}_{i+j}, \quad& j=-2,-1,0,1,2. \nonumber
\end{flalign}
For the fifth-order WENO-AO reconstruction, the linear weights  ${\gamma}_{k}~(k=1,2,3,4)$ can be any positive numbers with the only requirement that their sum equals one, and the suggested  $\left(\gamma_{1}, \gamma_{2}, \gamma_{3}, \gamma_{4}\right)=(0.01125,~0.1275,~0.01125,~0.85)$ is used in this paper. The nonlinear weights are calculated as follows
\ben
\begin{array}{l}
\omega_{k}=\frac{\alpha_{k} }{\sum\limits_{k=1}^{4} \alpha_{k}}, \quad \alpha_{k}=\gamma_{k}\left[1+\left(\displaystyle\frac{\tau}{\beta_{k}+10^{-12}}\right)^{2}\right], \quad
\tau=\frac{1}{3}\sum\limits_{k=1}^{3}\left|\beta_{4}-\beta_{k}\right|,
\end{array}
\een
where $\beta_{k}$ is the smoothness indicator of  $p_{k}(x)$ on  $ I_{i}$ given by
\begin{align*}
\beta_{k}=\sum\limits_{\alpha=1}^{\kappa}\int_{I_i}\Delta x^{2\alpha-1}\left(\frac{d^\alpha p_{k}(x)}{dx^\alpha}\right)^2dx,
\end{align*}
with $\kappa=2$ for $k=1,\ 2,\ 3$, and $\kappa=4$ for $k=4$.

Finally, the WENO-AO reconstructions of $u(x)$ and $u'(x)$ are
\begin{flalign*}
u(x) \approx P(x) = \frac{\omega_{4}}{\gamma_{4}}\left[p_{4}(x)-\sum_{k=1}^{3} \gamma_{k} p_{k}(x)\right]+\sum_{k=1}^{3} \omega_{k} p_{k}(x),\label{WENO-AO1}\\
u'(x) \approx P'(x) = \frac{\omega_{4}}{\gamma_{4}}\left[p'_{4}(x)-\sum_{k=1}^{3} \gamma_{k} p'_{k}(x)\right]+\sum_{k=1}^{3} \omega_{k} p'_{k}(x),
\end{flalign*}
Then, we can obtain the fifth-order approximations
$$u_{i +\ha}^{-}=P(x_{i +\ha}),\quad u_{i -\ha}^{+}=P(x_{i -\ha}).$$

\section*{Appendix B.\ The proof of Proposition \ref{prop_ahpha_1D}}
In this section, we give a proof of Proposition \ref{prop_ahpha_1D} based on the idea of \cite{xing2011high,xing2010positivity}.
We start by showing the following lemma on the positivity of a first-order Lax-Friedrichs scheme with a well-balanced flux.
\begin{lemma}\label{lemma1}
Consider the first-order scheme with the Lax-Friedrichs flux
\begin{align*}
{h}_{i}^{n+1}
={h}_{i}^{n}&-\lambda\left(\hat{\bF}^h\left(h_{i}^{*,+},u_{i}^n;
h_{i+1}^{*,-},u_{i+1}^n\right)-\hat{\bF}^h\left(h_{i-1}^{*,+},u_{i-1}^n;
h_{i}^{*,-},u_{i}^n\right)\right),
\end{align*}
where
\begin{align*}
&h^{*,+}_{i}=\max(0,h^n_{i}+b_{i}-\max(b_{i},b_{i+1})),
\\&h^{*,-}_{i}=\max(0,h^n_{i}+b_{i}-\max(b_{i-1},b_{i})).
\end{align*}
If ${h}_{i}^{n},~h_{i\pm1}^n$ are non-negative, then ${h}_{i}^{n+1}$ is also non-negative under the CFL condition $\lambda\alpha\leqslant1$, with
$\alpha=\max\limits_i(|u_i^n|+\sqrt{gh_i^n})$.
\end{lemma}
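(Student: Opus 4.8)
The plan is to write the first-order update out explicitly and exhibit $h_i^{n+1}$ as a non-negative combination of the non-negative quantities $h_{i-1}^{*,+}$, $h_{i+1}^{*,-}$, and $h_i^n$. First I would substitute the Lax--Friedrichs flux $\hat{\bF}^h(h_L,u_L;h_R,u_R)=\ha(h_Lu_L+h_Ru_R)-\frac{\alpha}{2}(h_R-h_L)$ into the update and regroup according to the index of the reconstructed water height, obtaining
\begin{align*}
h_i^{n+1}=h_i^n
&+\frac{\lambda}{2}(\alpha+u_{i-1}^n)\,h_{i-1}^{*,+}
+\frac{\lambda}{2}(\alpha-u_{i+1}^n)\,h_{i+1}^{*,-}\\
&-\frac{\lambda}{2}(\alpha+u_i^n)\,h_i^{*,+}
-\frac{\lambda}{2}(\alpha-u_i^n)\,h_i^{*,-}.
\end{align*}

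The two cross terms are harmless: since $\alpha=\max_i(|u_i^n|+\sqrt{gh_i^n})\geqslant|u_j^n|$ for every $j$, the coefficients $\alpha+u_{i-1}^n$ and $\alpha-u_{i+1}^n$ are non-negative, and $h_{i-1}^{*,+},h_{i+1}^{*,-}\geqslant0$ by construction, so these contributions only increase $h_i^{n+1}$. It therefore suffices to bound the local part $h_i^n-\frac{\lambda}{2}(\alpha+u_i^n)h_i^{*,+}-\frac{\lambda}{2}(\alpha-u_i^n)h_i^{*,-}$ from below. The crucial structural fact supplied by the hydrostatic reconstruction \eqref{hydrostatic_reconstruction} is the inequality $h_i^{*,\pm}\leqslant h_i^n$: because $\max(b_i,b_{i+1})\geqslant b_i$ one has $h_i^n+b_i-\max(b_i,b_{i+1})\leqslant h_i^n$, and after the cutoff $\max(0,\cdot)$ the value stays between $0$ and $h_i^n$ (here the hypothesis $h_i^n\geqslant0$ is used), with the analogous argument for $h_i^{*,-}$.

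Using $0\leqslant h_i^{*,\pm}\leqslant h_i^n$ together with the non-negativity of both coefficients $\alpha\pm u_i^n$, I would estimate
\begin{equation*}
\frac{\lambda}{2}(\alpha+u_i^n)h_i^{*,+}+\frac{\lambda}{2}(\alpha-u_i^n)h_i^{*,-}
\leqslant\frac{\lambda}{2}\big[(\alpha+u_i^n)+(\alpha-u_i^n)\big]h_i^n
=\lambda\alpha\,h_i^n,
\end{equation*}
so that $h_i^{n+1}\geqslant(1-\lambda\alpha)\,h_i^n+(\text{non-negative cross terms})\geqslant0$ under the CFL condition $\lambda\alpha\leqslant1$. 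The main obstacle here is conceptual rather than computational: the whole argument hinges on recognizing that the hydrostatic cutoff is exactly what forces $h_i^{*,\pm}\leqslant h_i^n$, which is the single ingredient that closes the estimate. Once that inequality is in hand, the remainder is the standard convex-combination bound for first-order monotone Lax--Friedrichs schemes, and the CFL threshold $\lambda\alpha\leqslant 1$ emerges naturally from the factor $2\alpha$ produced when the $\pm u_i^n$ contributions cancel.
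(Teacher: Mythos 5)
Your proof is correct and follows essentially the same route as the paper: both expand the Lax--Friedrichs fluxes, keep the incoming contributions $\frac{\lambda}{2}(\alpha+u_{i-1}^n)h_{i-1}^{*,+}$ and $\frac{\lambda}{2}(\alpha-u_{i+1}^n)h_{i+1}^{*,-}$ as manifestly non-negative, and close the estimate on the local part using exactly the facts $0\leqslant h_i^{*,\pm}\leqslant h_i^n$, $\alpha\pm u_i^n\geqslant 0$, and $\lambda\alpha\leqslant 1$. The only cosmetic difference is that the paper phrases the conclusion as a non-negative linear combination of $h_{i-1}^n$, $h_i^n$, $h_{i+1}^n$ by writing the coefficients with the ratios $h_i^{*,\pm}/h_i^n\leqslant 1$, whereas you bound $h_i^{*,\pm}$ by $h_i^n$ directly --- which, if anything, is slightly cleaner since it avoids dividing by a possibly vanishing $h_i^n$.
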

\begin{proof}
\begin{align*}
{h}_{i}^{n+1}&={h}_{i}^{n}-\frac{\lambda}{2}\left(h_i^{*,+}u_i^n+h_{i+1}^{*,-}u_{i+1}^n-
 \alpha(h_{i+1}^{*,-}-h^{*,+}_i)\right) \\
 & \quad \quad \,\, +\frac{\lambda}{2}\left(h_{i-1}^{*,+}u_{i-1}^n+h_{i}^{*,-}u_{i}^n-
 \alpha(h_{i}^{*,-}-h^{*,+}_{i-1})\right) \\
 &=\left(1-\frac{\lambda}{2}(\alpha+u_i^n)\frac{h_i^{*,+}}{h_i^n}
-\frac{\lambda}{2}(\alpha-u_i^n)\frac{h_i^{*,-}}{h_i^n}\right)h_i^n\\
& \quad \,
+\left[\frac{\lambda}{2}(\alpha+u_{i-1}^n)\frac{h_{i-1}^{*,+}}{h_{i-1}^n}\right]h_{i-1}^n
+\left[\frac{\lambda}{2}(\alpha-u_{i+1}^n)\frac{h_{i+1}^{*,-}}{h_{i+1}^n}\right]h_{i+1}^n
\end{align*}
One can see that ${h}_{i}^{n+1}$ is a linear combination of ${h}_{i}^{n},~h_{i-1}^n$ and $h_{i+1}^n$ and all the
coefficients are non-negative. Thus, ${h}_{i}^{n+1}\geq0$.
\end{proof}

\vspace{8pt}

Now, we provide a complete proof of Proposition \ref{prop_ahpha_1D}.

\vspace{8pt}

\begin{proof}
From (\ref{Xi}), we have
\begin{equation}\label{simple-eq}
\overline{h}_{i}^{n}=\left(1-w_{1}-w_{4}\right) \xi_{i}+w_{1} h_{i-\frac{1}{2}}^{+}+w_{4} h_{i+\frac{1}{2}}^{-}.
\end{equation}
Plug \eqref{simple-eq} into \eqref{discrete_10} and rewrite it as
\begin{align*}
\overline{h}_{i}^{n+1}=
& \left(1-w_{1}-{w}_{4}\right) \xi_{i}+w_{1} h_{i-\frac{1}{2}}^{+}+w_{4} h_{i+\frac{1}{2}}^{-}
\\
&-\lambda\left[\hat{\bF}^h\left( h_{i+\frac{1}{2}}^{*,-} ,u_{i+\frac{1}{2}}^{-};  h_{i+\frac{1}{2}}^{*,+},u_{i+\frac{1}{2}}^{+}\right)\right.
-\hat{\bF}^h\left( h_{i-\frac{1}{2}}^{*,+} ,u_{i-\frac{1}{2}}^{+};  h_{i+\frac{1}{2}}^{*,-},u_{i+\frac{1}{2}}^{-}\right)
\\&~~~~+\hat{\bF}^h\left(h_{i-\frac{1}{2}}^{*,+} ,u_{i-\frac{1}{2}}^{+};  h_{i+\frac{1}{2}}^{*,-},u_{i+\frac{1}{2}}^{-}\right)
- \left.\hat{\bF}^h\left( h_{i-\frac{1}{2}}^{*,-} ,u_{i-\frac{1}{2}}^{-}; h_{i-\frac{1}{2}}^{*,+},u_{i-\frac{1}{2}}^{+}\right)\right]\\
=&\left(1-w_{1}-w_{4}\right) \xi_{i}+w_{1} Z_{1}+w_{4} Z_{4},
\end{align*}
where
\begin{align*}
Z_{1}=&  h_{i-\frac{1}{2}}^{+}
-\frac{\lambda}{w_{1}}\left[\hat{\bF}^h\left( h_{i-\frac{1}{2}}^{*,+},u_{i-\frac{1}{2}}^{+} ;h_{i+\frac{1}{2}}^{*,-}, u_{i+\frac{1}{2}}^{-}\right)-\hat{\bF}^h\left(h_{i-\frac{1}{2}}^{*,-},u_{i-\frac{1}{2}}^{-}  ; h_{i-\frac{1}{2}}^{*,+},u_{i-\frac{1}{2}}^{+}\right)\right] \\
=& h_{i-\frac{1}{2}}^{+}
-\frac{\lambda}{2w_{1}}\left[h_{i-\frac{1}{2}}^{*,+}u_{i-\frac{1}{2}}^{+}
+h_{i+\frac{1}{2}}^{*,-}u_{i+\frac{1}{2}}^{-}
-\alpha(h_{i+\frac{1}{2}}^{*,-}-h_{i-\frac{1}{2}}^{*,+} )\right]\\
&~~~~~~~+\frac{\lambda}{2w_{1}}\left[h_{i-\frac{1}{2}}^{*,-}u_{i-\frac{1}{2}}^{-}
+h_{i-\frac{1}{2}}^{*,+}u_{i-\frac{1}{2}}^{+}
-\alpha(h_{i-\frac{1}{2}}^{*,+}-h_{i-\frac{1}{2}}^{*,-} )\right]\\
=& \left[1-\frac{\lambda}{2w_{1}}(\alpha+u_{i-\frac{1}{2}}^{+})\frac{h_{i-\frac{1}{2}}^{*,+}}{h_{i-\frac{1}{2}}^{+}}
-\frac{\lambda}{2w_{1}}(\alpha-u_{i-\frac{1}{2}}^{+})\frac{h_{i-\frac{1}{2}}^{*,+}}{h_{i-\frac{1}{2}}^{+}}\right]h_{i-\frac{1}{2}}^{+}\nonumber\\
&+\left[\frac{\lambda}{2w_{1}}(\alpha+u_{i-\frac{1}{2}}^{-})\frac{h_{i-\frac{1}{2}}^{*,-}}{h_{i-\frac{1}{2}}^{-}}\right]h_{i-\frac{1}{2}}^{-}
+\left[\frac{\lambda}{2w_{1}}(\alpha-u_{i+\frac{1}{2}}^{-})\frac{h_{i+\frac{1}{2}}^{*,-}}{h_{i+\frac{1}{2}}^{-}}\right]h_{i+\frac{1}{2}}^{-},\nonumber\\
Z_{4}= & h_{i+\frac{1}{2}}^{-}
 -\frac{\lambda}{w_{4}}\left[\hat{\bF}^h\left( h_{i+\frac{1}{2}}^{*,-},u_{i+\frac{1}{2}}^{-}; h_{i+\frac{1}{2}}^{*,+},u_{i+\frac{1}{2}}^{+}\right)-\hat{\bF}^h\left( h_{i-\frac{1}{2}}^{*,+},u_{i-\frac{1}{2}}^{+} ; h_{i+\frac{1}{2}}^{*,-},u_{i+\frac{1}{2}}^{-}\right)\right].
\end{align*}
It follows from Lemma \ref{lemma1} that $Z_1\geq0$ and $Z_4\geq0$, and we can get $\overline{h}_{i}^{n+1}\geq0$.
\end{proof}

\end{document}